\theoremstyle{plain}
\newtheorem{thm}{Theorem}[section]
\newtheorem{prop}[thm]{Proposition}
\newtheorem{lemma}[thm]{Lemma}
\newtheorem{claim}[thm]{Claim}
\theoremstyle{definition}
\newtheorem{defn}[thm]{Definition}
\newenvironment{rmk}
{\pushQED{\qed}\rmkx}
{\popQED\endrmkx}
\newenvironment{example}
{\pushQED{\qed}\examplex}
{\popQED\endexamplex}
\newenvironment{proofclaim}[1][Proof]{\begin{proof}[#1]}{\end{proof}}
\newcommand{\barMgn}{\overline{\mathcal{M}}_{g,n}}
\newcommand{\C}{\mathbb{C}}
\newcommand{\R}{\mathbb{R}}
\newcommand{\M}{\mathcal{M}}
\newcommand{\calM}{\mathcal{M}}
\newcommand{\T}{\mathcal{T}}
\newcommand{\calT}{\mathcal{T}}
\newcommand{\calQ}{\mathcal{Q}}
\renewcommand{\P}{\mathbb{P}}
\newcommand\RR{\mathbb{R}}
\newcommand\CC{\mathbb{C}}
\newcommand{\Mgn}{\overline{\mathcal{M}}_{g,n}}
\newcommand{\GL}{\operatorname{GL}}
\newcommand{\TM}{Teichm\"uller\xspace}
\newcounter{bencomments}
\title{The boundary of a totally geodesic subvariety of moduli space}  
\author{Frederik Benirschke\thanks{University of Chicago, \href{mailto:benirschke.math@gmail.com}{\nolinkurl{benirschke.math@gmail.com}}.}, Benjamin Dozier\thanks{Department of Mathematics, Cornell University, \href{mailto:benjamin.dozier@cornell.edu}{\nolinkurl{benjamin.dozier@cornell.edu}}. Research supported in part by NSF Grant DMS-2247244 and the Simons Foundation.
}, John Rached\thanks{Department of Mathematics, Binghamton University, \href{mailto:jabourached@binghamton.edu}{\nolinkurl{jabourached@binghamton.edu}}.} }
   \def\MR#1{}
\begin{document}
\maketitle

\begin{abstract}
  We consider subvarieties $N$ of $\mathcal{M}_{g,n}$, the moduli space of genus $g$ Riemann surfaces with $n$ marked points, that are totally geodesic with respect to the Teichm\"uller metric.  The Deligne-Mumford boundary of $\mathcal{M}_{g,n}$ decomposes into strata, each of which is essentially a product of lower complexity moduli spaces -- in such spaces there is a natural notion of totally geodesic.  We show that the boundary locus of $N$ in any such stratum is itself totally geodesic.  Furthermore, we prove that each such boundary locus decomposes into prime pieces, and for each such piece the projection to each factor is locally isometric in an appropriate sense.  
\end{abstract}

\section{Introduction}
\label{sec:intro}

In this paper we study the boundary of a totally geodesic subvariety of moduli space in the Deligne-Mumford compactification.  We prove that the boundary is itself totally geodesic, in a strong sense.  These results may be useful in classifying totally geodesic subvarieties.  %
And they may aid in the study of geometric and dynamical properties of these subvarieties (in particular, counting closed geodesics on them, our original motivation).  

In our proofs, we use perspectives related to classical Teichm\"uller theory (e.g. Beltrami differentials and plumbing coordinates).  There is a $GL^+(2,\R)$-invariant subvariety in the space of quadratic differentials associated to the totally geodesic subvariety.  Boundaries of spaces of quadratic differentials play an important role, and for these we use results of Chen-Wright and Benirschke.

\subsection{Main results} 
Let $\T_{g,n}$ be the Teichm\"uller space of closed Riemann surfaces of genus $g$ with $n$ (labeled) marked points, and let $\M_{g,n}$ be the corresponding moduli space (i.e. $\T_{g,n}$ is the orbifold universal cover of $\M_{g,n}$).  Let $\T:=\T_{g_1,n_1} \times \cdots \times \T_{g_k,n_k} $ be a product of Teichm\"uller spaces.  

We will work with the Deligne-Mumford compactification $\overline\M_{g,n} = \M_{g,n} \sqcup \partial \M_{g,n}$. 
 The boundary $\partial \M_{g,n}$ admits a stratification 
$ \partial \M_{g,n}=\bigsqcup_\Gamma \Delta_{\Gamma}, $
 where $\Delta_{\Gamma}$, a \textit{boundary stratum}, parametrizes stable curves with dual graph $\Gamma$.  Each $\Delta_\Gamma$ is product of lower complexity moduli spaces $\M_{g_1,n_1} \times \cdots \times \M_{g_k,n_k}$, quotiented by a finite group \footnote{The group is the automorphism group of the graph $\Gamma$, decorated with marked points and genera of components.  This group acts on the product by a composition of permutations of factors and of labelings of marked points. \label{foot:quotient}}. 
 We will refer to any such quotient of such a product as a \emph{multi-component moduli space}, and denote it $\M$.  The corresponding product space $\M_{g_1,n_1} \times \cdots \times \M_{g_k,n_k}$ will be referred to as the \emph{product cover} of $\M$.

\begin{defn}
    \label{defn:geod}
    We say that $\gamma: \R \to \T$ is a \emph{(Teichm\"uller) geodesic} if $\gamma=(\gamma_1,\ldots,\gamma_k)$, where each $\gamma_i: \R \to \T_{g_i,n_i}$ is an isometric embedding, with respect to the standard metric on $\R$, and a rescaling of the Teichm\"uller metric on $\T_{g_i,n_i}$ by some factor $c_i>0$.  We define a geodesic in $\M$ to be the projection to $\M$ of a geodesic in $\T$.  
\end{defn}

Because of the potentially different scaling factors $c_i$, these geodesics are ``multi-speed": they can move at different speeds in the different factors of $\T$.  
\begin{rmk}
    Given $X,Y\in \T$, there is a unique geodesic in $\T$ (up to scaling and isometry reparametrization of $\R$) with $X,Y$ in the image.  This follows immediately from Teichm\"uller's existence and uniqueness theorems applied to each factor $\T_{g_i,n_i}$.  
\end{rmk}

\begin{defn}
\label{defn:tot-geod}
 We say that $N'\subset \T$ is \emph{totally geodesic} if for any distinct $X,Y\in N'$, there exists a geodesic that has $X,Y$ in its image and lies completely in $N'$.  We say that an algebraic subvariety $N\subset \M$ is \emph{totally geodesic} if any lift $N$ under the natural projection $\T \to \M$ is totally geodesic (a \emph{lift} is an analytic irreducible component of the pre-image).  
 
\end{defn}
In the case where $\T$ or $\M$ has a single factor, the above recovers the usual definition of (Teichm\"uller) totally geodesic.  Note that a singleton set is totally geodesic, since the condition of the definition is vacuously satisfied.  

Our main theorem below gives that the boundary of a totally geodesic subvariety is totally geodesic, as well as additional information about the structure of the boundary. Similar results were obtained independently, and at the same time, in joint work of Arana-Herrera and Wright \cite{aw24}.  Their techniques are very different -- in particular, study of cylinders is central to their approach.  They also situate totally geodesic subvarieties in the world of hierarchically hyperbolic spaces, a vibrant sub-area of geometric group theory.

\begin{thm}
\label{thm:bdy-tot-geod}
    Let $N$ be a totally geodesic (complex algebraic) subvariety of $\M_{g,n}$.  Let $\mathring{\partial}N$ be an irreducible component of $\partial N\cap \Delta_{\Gamma}$, where $\Delta_\Gamma$ is some boundary stratum of $\partial \M_{g,n}$.  Then 
    $\mathring{\partial} N$ is a totally geodesic subvariety of $\Delta_{\Gamma}$.

\end{thm}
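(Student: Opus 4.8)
The plan is to pass through quadratic differentials. A totally geodesic subvariety $N\subset\M_{g,n}$ lifts to a $\GL^+(2,\R)$-invariant subvariety $\Omega N$ in the bundle of (unit-area) quadratic differentials over $\M_{g,n}$: a Teichm\"uller geodesic through $X$ is the projection of a $\GL^+(2,\R)$-orbit of some quadratic differential $q$ at $X$, and total geodesy of $N$ forces the collection of all such $q$ realizing geodesics inside $N$ to be $\GL^+(2,\R)$-invariant and (by Teichm\"uller uniqueness applied factor-by-factor) to have the right dimension. The multi-speed feature of Definition~\ref{defn:geod} corresponds to allowing independent $\GL^+(2,\R)$-actions on the factors, i.e. $\Omega N$ lives in a product of strata of quadratic differentials, one per component of a curve in $\M$, and is invariant under $(\GL^+(2,\R))^k$ after passing to the product cover. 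The first step is to set up this dictionary carefully: show $N$ totally geodesic $\iff$ $\Omega N$ is a (product-)$\GL^+(2,\R)$-invariant subvariety whose fiberwise dimension matches $\dim N$, and conversely that such an invariant subvariety has totally geodesic projection.

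Next I would take the boundary. As a point of $\M_{g,n}$ degenerates to a stable curve with dual graph $\Gamma$, the quadratic differential degenerates as well; the relevant statement is the description (due to Chen--Wright and Benirschke, cited in the introduction) of the boundary of a $\GL^+(2,\R)$-invariant subvariety of a stratum of quadratic differentials. Their results say that such a boundary is again a union of $\GL^+(2,\R)$-invariant subvarieties of the boundary strata of the Hodge/quadratic-differential bundle, and that the boundary strata of the differential bundle are (products of) strata over $\Delta_\Gamma$. So $\Omega N$ has a well-understood boundary $\Omega(\partial N)$ sitting over $\partial\M_{g,n}$, and over the stratum $\Delta_\Gamma$ it decomposes into pieces each of which is $(\GL^+(2,\R))^{(\text{number of nodes/components})}$-invariant. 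The second step is to match this up: identify the quadratic-differential locus lying over an irreducible component $\mathring\partial N$ of $\partial N\cap\Delta_\Gamma$ with one of these boundary pieces, and check it still has full fiberwise dimension over $\mathring\partial N$ (no drop in dimension), so that when we run the dictionary of Step 1 backwards over $\Delta_\Gamma$ — now viewed as a multi-component moduli space — we conclude $\mathring\partial N$ is totally geodesic in the sense of Definition~\ref{defn:tot-geod}.

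To make Step 1's converse direction usable at the boundary I also need an alternative, more hands-on description of geodesics near the boundary, which is where plumbing coordinates and Beltrami differentials enter: given $X,Y\in\mathring\partial N$, I want to produce a geodesic of $\Delta_\Gamma$ joining them inside $\mathring\partial N$. The idea is to approximate $X,Y$ by nearby points $X_t,Y_t\in N$ in the open part, take the Teichm\"uller geodesics $\gamma_t$ joining them (which lie in $N$ by hypothesis), and extract a limit as $t\to 0$; the limiting object should be a geodesic of the multi-component space $\Delta_\Gamma$ joining $X,Y$, and it lies in $\mathring\partial N$ because $\mathring\partial N$ is closed in $\Delta_\Gamma$. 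Controlling this limit — showing the Teichm\"uller geodesics do not run off to a different boundary stratum, and that the limiting quadratic differentials are nonzero on every component with the correct scaling constants $c_i$, i.e. genuinely giving a multi-speed geodesic rather than a degenerate one — is where the plumbing estimates do the real work.

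I expect the main obstacle to be precisely this limiting/compactness argument at the interface: ensuring the Teichm\"uller geodesics between interior approximants converge to something non-degenerate in \emph{every} factor of $\Delta_\Gamma$ (so that the projection to each factor is a genuine, positive-speed geodesic and not constant), and that the limit stays in the prescribed stratum $\Delta_\Gamma$ rather than a deeper one. This requires uniform control of the degenerating flat structures — e.g. that the flat lengths of the vanishing cycles go to zero at comparable rates across the geodesic family — and is exactly the kind of estimate that the Chen--Wright/Benirschke boundary theory, combined with plumbing coordinates, is designed to supply; marshalling it into a clean statement about $(\GL^+(2,\R))^k$-invariance of the boundary locus is the crux.
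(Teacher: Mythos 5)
Your high-level plan — pass to the bundle of quadratic differentials generating geodesics in $N$, use $\GL^+(2,\R)$-invariance and boundary theory of strata, then translate back — does match the skeleton of the paper's argument (the paper's $QN$ is your $\Omega N$, and \Cref{prop:bdy-gl2r-geod} plus \Cref{lem:gl-to-inf} plus \Cref{prop:inf-geod-tot-geod} carry out the translation). But several of the ingredients that actually make this work are missing or replaced by steps that would fail, so the proposal has a genuine gap.

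The most serious issue is your Step~3. The paper deliberately does \emph{not} try to produce geodesics in $\mathring{\partial}N$ by approximating $X,Y\in\mathring{\partial}N$ by interior points $X_t,Y_t\in N$ and extracting a limit of the \TM geodesics $[X_t,Y_t]$. This is because the Deligne--Mumford boundary is at infinite \TM distance, so the lengths $d(X_t,Y_t)$ need not stay bounded (compare the remark following \Cref{claim:pairing-cts} in the paper: for a smooth path hitting the boundary, $\limsup\|\gamma'(t)\|=\infty$ even when the boundary tangent is finite). If $X_t,Y_t$ degenerate at incomparable rates — and you have no way to force comparable rates, since the only constraint is $X_t,Y_t\in N$ — the geodesics $[X_t,Y_t]$ run off to infinity and have no reasonable limit connecting $X$ to $Y$. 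The paper avoids this by working with converging families of \emph{tangent vectors} rather than pairs of points (\Cref{prop:tangent}, \Cref{lem:hol-family}), and the resulting quadratic-differential limit is pinned down only up to per-component positive rescalings, which is a weaker conclusion than what your limiting argument would require. The passage from ``many quadratic differentials tangent to $\mathring{\partial}N$'' to ``totally geodesic'' is then made through the infinitesimal criterion and an invariance-of-domain argument with the exponential map (\Cref{prop:inf-geod-tot-geod}), not through a pointwise limiting construction.

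Two further ingredients are absent that you cannot do without. First, your appeal to Chen--Wright/Benirschke for the $\GL^+(2,\R)$-invariance of the boundary locus conflates two different objects: the level-wise linearity result in the multi-scale space (\Cref{thm:quad-level-ind}) gives you one piece of the invariance, but the crucial fact that $\mathring{\partial}QN'$ is $\GL^+(2,\R)$-invariant requires the \emph{real} multi-scale space $\widehat\Xi\calQ(\kappa)$, whose distinguishing feature is that the $\GL^+(2,\R)$-action extends \emph{continuously} to it (\Cref{lem:gl2-inv}). Second, you never mention the constancy-of-area-ratios theorem (Chen--Wright, \Cref{prop:ratio-areas-ab}, extended in \Cref{prop:ratio-areas-qd}). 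This is exactly what controls the speeds $c_i$ across components that you identify as a worry: after decomposing into prime factors, ratios of areas are constant, which is what lets the paper replace the would-be $(\GL^+(2,\R))^k$-invariance of $\Omega N$ (which does not hold; only the diagonal $\GL^+(2,\R)$-invariance holds) by single-speed geodesics in each prime piece. Without that result, ``plumbing estimates'' alone will not prevent the $c_i$ from degenerating.
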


\begin{rmk}
    Even if $N$ is itself a smooth submanifold, we are not able to rule out that $\mathring{\partial} N$ is rather singular.  
\end{rmk}

\begin{example} 
\label{ex:cover}
We present an example of a totally geodesic submanifold in $\M_3$, defined in terms of a covering construction, to illustrate some of the boundary phenomena that can occur.  

Let $X$ be a genus $2$ surface, and $\widehat{X}$ a regular, double cover of $X$. See \Cref{fig:bdy}.  The deck group of $\widehat{X}\to X$ transposes the two points in each fiber. Consider the locus $N\subset \mathcal{M}_3$ consisting of such $\widehat{X}$, where $X$ ranges over $\M_2$.
\footnote{To formally define this locus, we fix a covering $\widehat S \to S$ of topological surfaces. This induces a map on \TM spaces $\T_2\to \T_3$, which is an isometric embedding.  Now let $G$ be the finite index subgroup of the mapping class group of $S$ consisting of those maps that  to this cover. One then gets a map $\T_2/G \to \M_3,$ and $N$ is the image of this map.} 
This $N$ is a totally geodesic subvariety of $\mathcal{M}_3$
(it is locally isometric to $\M_2$).

For a separating curve $\gamma$ on $X$, there is a degeneration to the boundary of $N$ obtained by simultaneously pinching the two curves $\widehat{\gamma} \in \widehat{X}$ in the preimage of $\gamma$. 
Let $\Delta_{\Gamma}$ be the boundary stratum of $\mathcal{M}_{3}$ coming from these degenerations; the product cover of $\Delta_\Gamma$ is parametrized by $\mathcal{M}_{1,2} \times \mathcal{M}_{1,1} \times \mathcal{M}_{1,1}$.  We consider $\mathring{\partial} N:= \partial N \cap \Delta_{\Gamma}$.  We claim that the product cover of $\mathring{\partial} N$ is a product $N_1\times N_2$, where $N_1\subset \M_{1,2}$ is the locus where the difference of the marked points is $2$-torsion (a dimension $1$ totally geodesic manifold),
and $N_2 \subset \M_{1,1}\times \M_{1,1}$ is the image of the diagonal embedding $\M_{1,1} \to \M_{1,1} \times \M_{1,1}$.  

To see this, we begin by observing that any element of $\mathring{\partial} N$ is obtained by the degeneration described above. Indeed, elements of $\mathring{\partial N}$ are connected, admissible covers of nodal Riemann surfaces $X$ with two genus $1$ irreducible components. %
In other words, these are nodal covers arising as limits of smooth curves, where source and target degenerate simultaneously. The construction and properness of the Hurwitz space of connected admissible covers is discussed in \cite[Section 4]{hm82}.  The deck group action at the boundary permutes the two $\M_{1,1}$ factors (i.e. the left and right Riemann surfaces in \Cref{fig:bdy}), while it acts by an involution on the $\M_{1,2}$ factor (the middle surface in the figure) given by translation by a $2$-torsion vector.  The claimed description of $\mathring{\partial N}$ then follows (the deck involution must act by a translation of order $2$, which permutes the marked points, hence there difference must be $2$-torsion.)

Observe that since $N_1,N_2$ are each totally geodesic, so is $\mathring{\partial N}$.  Our \Cref{thm:bdy-tot-geod} gives the same conclusion for any totally geodesic subvariety of $\M_{g,n}$.

\begin{figure}
    \centering
    \includegraphics[width=0.95\linewidth]{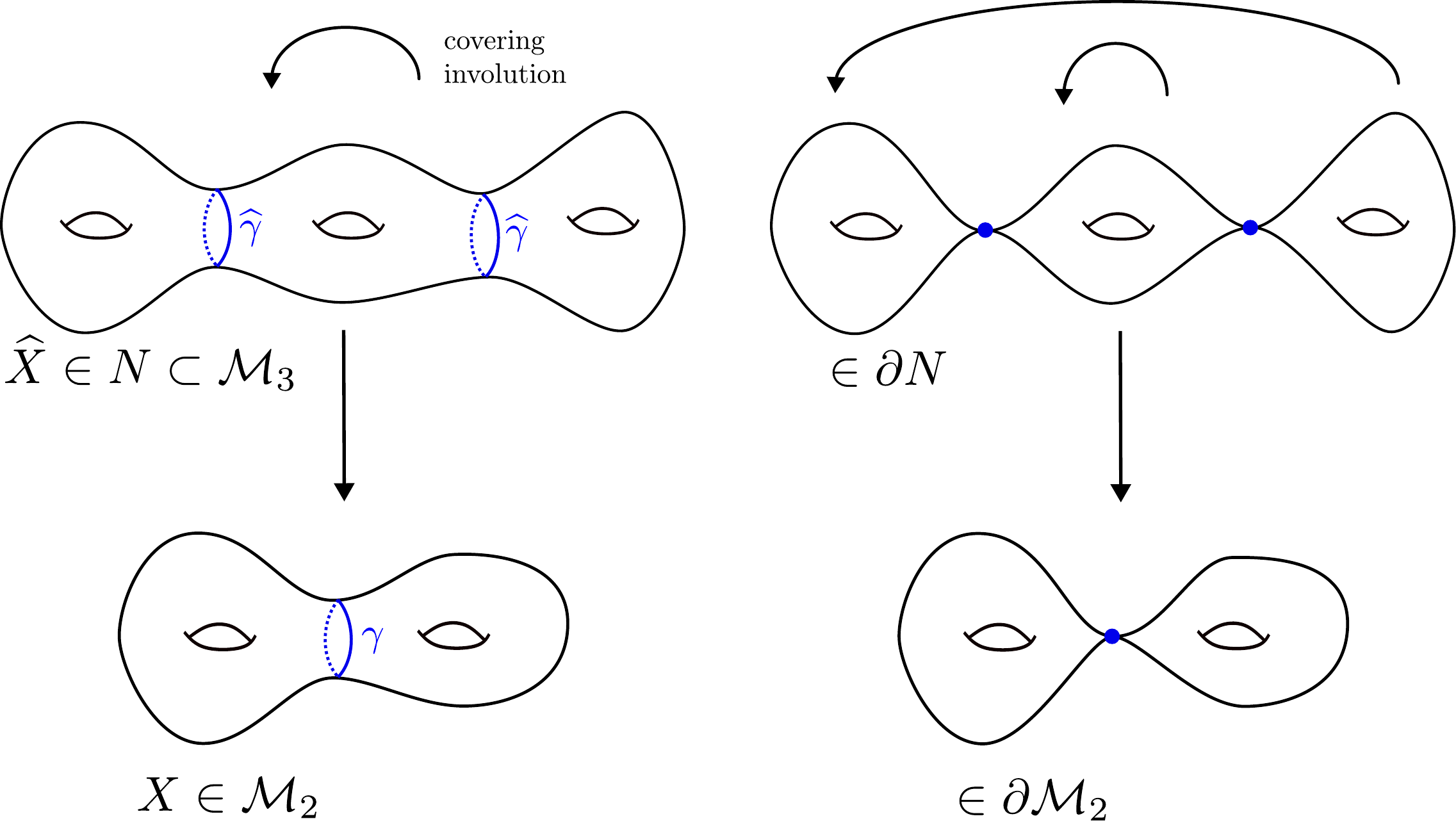}
    \caption{Boundary of totally geodesic variety coming from covering}
    \label{fig:bdy}
\end{figure}

\end{example}

\subsection{Structure of prime factors of the boundary}
\label{sec:prime}
Our method of proof yields more structural information beyond the boundary being totally geodesic.  The boundary stratum in \Cref{ex:cover} decomposes as a product of $N_1$ and $N_2$, each of which is a totally geodesic subvariety, of a single moduli space, that is diagonally embedded in some product of moduli spaces. We show that this sort of phenomenon holds in general.  

For any subset $N\subset \M_{g_1,n_1} \times \cdots \times \M_{g_k,n_k}$, we say it is \emph{prime} unless, possibly after reordering components, there is some $s$ such that $N$ can be written as a product $N'\times N''$, where $N'\subset \M_{g_1,n_1} \times \cdots \times \M_{g_s,n_s}$ and $N''\subset \M_{g_{s+1},n_{s+1}} \times \cdots \times \M_{g_k,n_k}$ .  It follows easily from the definition of totally geodesic that if $N$ is totally geodesic, then any such factors $N',N''$ must both also be totally geodesic. 

For any subset $N\subset \M$ (recall $\M$ denotes a multi-component moduli space), we say it is \textit{prime} if there exists a  of $N$ to the product cover of $\M$ which is prime in the above sense.  

By progressively decomposing, any totally geodesic subvariety of $\M$ admits a decomposition into a product of prime totally geodesic subvarieties. 

A version of the following was also proved independently, and at the same time, in joint work of Arana-Herrera and Wright \cite{aw24}: 

\begin{thm}
    \label{thm:bdy-struc} 
    Let $\mathring{\partial} N$ be as in \Cref{thm:bdy-tot-geod}. Then the product cover of $\mathring{\partial} N$ decomposes as a product of prime factors $N_j \subset \M_j$ (where $\M_j$ is a product of moduli spaces), each of which has the following property.  For each moduli space $\M_{g,n}$ that is a factor of $\M_j$, the projection map $\Phi: N_j \to \M_{g,n}$ is locally injective, in the orbifold sense.   

\end{thm}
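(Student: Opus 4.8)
The plan is to combine \Cref{thm:bdy-tot-geod}, which we may assume, with the structure of Teichm\"uller geodesics in a product. Write $\M_j = \M_{h_1,m_1} \times \cdots \times \M_{h_r,m_r}$ for a prime factor, and fix the projection $\Phi : N_j \to \M_{h_i,m_i}$ to one of its factors. Passing to the product cover $N'_j \subset \T_{h_1,m_1} \times \cdots \times \T_{h_r,m_r}$, by \Cref{thm:bdy-tot-geod} this lift is totally geodesic, and by the definition of prime we may assume $N'_j$ does not split off the $i$-th factor. First I would observe that if $\Phi$ fails to be locally injective in the orbifold sense, then at some point of the lift $N'_j$ the derivative $d\Phi$ has a kernel, i.e. there is a nonzero tangent direction $v \in T N'_j$ projecting to $0$ in $T\T_{h_i,m_i}$; equivalently $v$ lies entirely in the tangent space of the product of the \emph{other} factors.

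The key step is to integrate this infinitesimal direction into a genuine splitting of $N'_j$. Take two points $X, Y \in N'_j$ that differ only in the $i$-th coordinate (or more precisely, choose $Y$ along a short geodesic segment in the direction $v$, so that $X$ and $Y$ have the same projections to all factors except the $i$-th). By total geodesy there is a multi-speed geodesic $\gamma = (\gamma_1, \ldots, \gamma_r)$ through $X$ and $Y$ lying entirely in $N'_j$. Since the projections of $X$ and $Y$ agree in every factor $\ell \neq i$, and each $\gamma_\ell$ is a (reparametrized) Teichm\"uller geodesic in a single factor — hence injective unless constant — we get $\gamma_\ell$ constant for all $\ell \neq i$. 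Thus $N'_j$ contains a full one-parameter family of points that move only in the $i$-th coordinate, and this holds locally near every point. I would then argue that the set of $i$-th coordinates achievable over a fixed value of the remaining coordinates is independent of that value, which forces $N'_j$ to be (locally, then globally on each analytic component) a product $N' \times N''$ with $N'' \subset \M_{h_i,m_i}$; this contradicts primeness. The cleanest way to run this last part is via the analytic structure: the fibers of the projection to $\prod_{\ell \neq i}\T_{h_\ell, m_\ell}$ have positive dimension everywhere, and total geodesy plus irreducibility propagates a single fiber to give a product decomposition — essentially the same argument already invoked in \Cref{sec:prime} to justify that prime decompositions of totally geodesic sets exist.

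The main obstacle I anticipate is the passage from the infinitesimal/local product statement to an honest global product decomposition of the (possibly singular) variety $N_j$, compatible with the orbifold quotient defining $\M$. Singularities of $\mathring{\partial} N$ (flagged in the remark after \Cref{thm:bdy-tot-geod}) mean one cannot simply invoke a submersion theorem; instead one must work component-by-component on the lift, show the ``moving only in the $i$-th factor'' relation is an analytic equivalence relation with constant-dimensional classes, and then descend through the finite group. A secondary subtlety is bookkeeping with the multi-speed scaling constants $c_\ell$ and the finite group action permuting isomorphic factors — but these only affect the identification of $\M_j$, not the local injectivity of $\Phi$ once the product splitting is ruled out.
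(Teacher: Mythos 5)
The paper's proof of this theorem does not proceed from \Cref{thm:bdy-tot-geod} alone; instead it reuses the stronger output of \Cref{sec:bdy-gl2r-geod}, namely that $\mathring{\partial}N$ is $GL^+(2,\R)$-geodesic (\Cref{prop:bdy-gl2r-geod}). It then passes this property to each prime factor via \Cref{prop:prod-gl2r}, and invokes \Cref{lem:teich-single-speed}: for a prime, $GL^+(2,\R)$-geodesic $N_j$, any two distinct points of a lift $\tilde N_j$ are joined by a \emph{single-speed} geodesic lying in $\tilde N_j$. Injectivity of each projection is then immediate, because a single-speed geodesic through two distinct points must move in every factor. The single-speed structure is exactly what is bought by \Cref{lem:gl-single-speed}, and ultimately by the Chen--Wright constancy of area ratios (\Cref{prop:ratio-areas-qd}). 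Your proposal tries to sidestep all of this and work only with total geodesy, which is a genuinely different and weaker starting point, and I do not believe it closes.

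Two concrete problems. First, a sign confusion: your kernel direction $v$ projects to $0$ in $T\T_{h_i,m_i}$, so integrating it yields $X,Y$ that \emph{agree} in factor $i$ and differ in the other factors; yet you describe $X,Y$ as differing only in the $i$-th coordinate and conclude that the geodesic moves only in factor $i$ with the other components constant. The correct conclusion is the opposite: the geodesic through such $X,Y$ has \emph{$\gamma_i$ constant} and the other components moving. Second, and more seriously, the step from ``the fibers of the projection $\pi_i$ restricted to $\tilde N_j$ are positive-dimensional'' to ``$\tilde N_j$ splits as a product with an $\M_{h_i,m_i}$-factor'' is not established, and you acknowledge as much. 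Positive-dimensional fibers of a coordinate projection do not force a product decomposition of a totally geodesic set; one needs uniformity of the fiber across base points, and nothing in \Cref{sec:prime} supplies that -- it only says that \emph{if} one can decompose one should keep decomposing. The paper avoids this entire issue: because the geodesic joining two distinct points in $\tilde N_j$ is single-speed, a fiber of $\pi_i$ over a point can contain at most one element, and no product decomposition argument is needed. Without the Chen--Wright input there is no obvious reason that a prime totally geodesic subvariety has injective factor projections, and your argument, as written, leaves exactly that gap open.
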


    By $\Phi$ ``locally injective, in the orbifold sense" we mean that for any lift $\tilde N_j$ to the universal cover of $\M_j$ such that the projection $\tilde \Phi: \tilde N_j \to \T_{g,n}$ is locally injective.  This is a somewhat weaker notion than locally injective for points in the orbifold locus of moduli space.

    We will also prove in \Cref{prop:local-isom} that the projection $\Phi$ in the above theorem is a \textit{local isometry}, again in an appropriate orbifold sense.  To make sense of such a statement, we must have a notion of metric on the multi-component moduli space $\M$, rather than just a notion of geodesics (which is all we need for the rest of the paper).  It turns that our local isometry statement will hold for many of the different notions of \TM metric on a multi-componenet moduli space (these metrics are discussed in \Cref{sec:tot-geod}).

\subsection{Context} In the moduli space of Riemann surfaces equipped with the Teichm\"uller metric there is a tension between behavior shared with homogeneous spaces and behavior that is very inhomogeneous.  A particular instance of this is the character of totally geodesic submanifolds: Teichm\"uller space contains many of complex dimension $1$ (Teichm\"uller disks), while containing rather few of higher dimension.  On the level of moduli space, the dimension $1$ totally geodesic subvarieties, known as Teichm\"uller curves, have been studied extensively; many interesting examples are known, and some classification results have been proved, but the story is still incomplete.  Only in the last decade have interesting higher dimensional totally geodesic subvarieties been constructed \cite{mmw17,emmw20}.  See \cite{goujard21} for a survey of recent results.  

The study of totally geodesic subvarieties overlaps with the study $GL^+(2,\R)$-orbit closures in spaces of quadratic and Abelian differentials on Riemann surfaces. The former is in a sense a special case of the latter.  Various different boundaries of strata (see \Cref{sec:compact}) have been fruitfully used to attack the classification problem for $GL^+(2,\R)$-orbit closures. 
 Metric and analytic perspectives are potentially helpful in the case of totally geodesic subvarieties.  There have been several recent results towards classifying higher dimensional totally geodesic subvarieties \cite{benirschke24, wright20}, but a full picture still seems far off.  Our structural results on the boundary may allow inductive approaches to this problem.

\subsection{Idea of the proof of \Cref{thm:bdy-tot-geod}}

We need to produce many geodesics that lie in $\mathring{\partial} N$.  We construct a set $Q$ of quadratic differentials lying above $\mathring{\partial} N$ that is (i) $GL^+(2,\R)$-invariant, and (ii) a variety of large dimension. This is done by taking families of tangent vectors to $N$ converging to tangent vectors to $\mathring{\partial} N$, and then analyzing the quadratic differentials generating Teichm\"uller geodesics tangent to the vectors.  To get $GL^+(2,\R)$-invariance, we use the \textit{real multi-scale compactification} of quadratic differentials, and the continuous extension action of the $GL^+(2,\R)$ action to it.  From the $GL^+(2,\R)$-invariance, we get that the elements of $Q$ generate Teichm\"uller geodesics tangent to $\mathring{\partial} N$.  And from the large dimension property, the supply of these geodesics is large enough to give that $\mathring{\partial} N$ is totally geodesic.  These steps are first carried out in the case when $Q$ is \textit{prime}, where we can use a result due to Chen-Wright on constancy of ratios of areas of components.  Finally we deduce the general case by decomposing into prime factors.

\subsection{Outline of the paper}

\begin{itemize}
    \item In \Cref{sec:teich-geo}, we recall some classical facts about \TM geometry, Beltrami differentials, and quadratic differentials, and generalize to the setting of products of \TM spaces.   

    \item In \Cref{sec:tot-geod}, we define several other notions of totally geodesic, and prove that most of these turn out to be equivalent.  In particular, we prove that ``infinitesimally totally geodesic" implies totally geodesic in the sense defined above; this will be used in the proof of the main theorem.  

    \item In \Cref{sec:WYSIWYG} we recall notions to do with multi-component differentials, in particular the result that ratio of areas on prime invariant subvarieties in strata of multi-component translation surfaces are constant.  

    \item In \Cref{sec:compact}, we recall several compactifications of spaces of differentials and their properties, generalizing several results from Abelian to quadratic differentials.  

    \item In \Cref{sec:tang-quad} we consider sequences of tangent vectors converging to a tangent vector to the boundary, and study the associated quadratic differentials.  

    \item In \Cref{sec:bdy-gl2r-geod} we show, using results of the previous two sections, that there is a large dimension set of quadratic differentials generating \TM geodesics lying in our $\mathring{\partial}N$; this property is called \emph{$GL^+(2,\R)$-geodesic}.  

    \item In \Cref{sec:gl2r-geo-tot-geo} we prove \Cref{thm:bdy-tot-geod}. We use the results of \Cref{sec:bdy-gl2r-geod} and \Cref{sec:WYSIWYG} to show that $\mathring{\partial}N$ is infinitesimally totally geodesic, and then we apply the result from \Cref{sec:tot-geod} to get totally geodesic.  

    \item In \Cref{sec:bdy-struc}, we prove \Cref{thm:bdy-struc} on the structure of prime factors of the boundary of a totally geodesic subvariety.  

\end{itemize}

\subsection{Acknowledgements}

We are grateful to Curtis McMullen for helpful comments on an earlier draft of this paper.

\section{\TM geometry and differentials}
\label{sec:teich-geo}

\subsection{Beltrami and quadratic differentials}
\label{sec:diff}
We recall, in the setting of a single Teichm\"uller space, the relation between Beltrami differentials, quadratic differentials, the tangent and contangent spaces of Teichm\"uller space, and the Teichm\"uller metric.  See \cite[Section 4.6]{mcmullen-notes} for more detail.  

Given $X\in \T_{g,n}$, let $M(X)$ be the space of bounded, measurable Beltrami differentials on $X$, i.e. tensors that locally have the form $f(z) \frac{d\bar z}{dz}$, where $f$ is a bounded, measurable complex-valued function (defined up to sets of measure $0$).  The quantity $\operatorname{ess}\sup_{z\in X}|f(z)|$ is well-defined and gives a norm, denoted $\|\cdot \|$, on $M(X)$; we denote by $M_{\le 1}(X)$ the unit ball for this norm. 

Let $\calQ_X$ denote the space of quadratic differentials $q$ on $X$ that are holomorphic away from the marked points, and have at worst simple poles at the marked points.  This is a Banach space with the \emph{area norm} (or $L^1$-norm):  
\begin{align*}
    \|q\| := \int_X |q|.
\end{align*}
There is a natural pairing between $\calQ_X$ and $M(X)$:
\begin{align*}
    (q,\mu)  := \int_X q\mu. 
\end{align*}

 Now there is an analytic map $M_{\le 1}(X)\to \T_{g,n}$ that takes a Beltrami differential $\mu$ to the image of the quasiconformal map out of $X$ whose dilatation is given by $\mu$ (the existence and uniqueness of this map follows from the Measurable Riemann Mapping Theorem).  Taking the derivative induces a surjective linear map on tangent spaces $T_XM_{\le 1}(X) \cong M(X) \to T_X\T_{g,n}$.  One can prove that the kernel of this map is precisely the set 
 $$\calQ_X^{\perp} =\{\mu: (q,\mu)=0 \text{ for all } q\in \calQ_X\},$$
 known as \emph{infinitesimally trivial Beltrami differentials}, and which is a closed subspace.  Thus $T_X \T_{g,n} \cong M(X)/\calQ_X^\perp.$  We also see from this that $\calQ_X$ is naturally identified with the \emph{cotangent space} $T_X^*\T_{g,n}$. 

 \paragraph{Teichm\"uller norm.}
 \label{sec:teich-norm}
 The norm $\|\cdot\|$ we've described on $M(X)$ descends to a norm, also denoted $\|\cdot\|$, on $T_X \T_{g,n}$, the \emph{Teichm\"uller norm}; concretely we think of a tangent vector $v$ as an equivalence of Beltrami differentials $[\mu]$, and $\|v\|$ is the infimum of the $\|\cdot\|$-norm over the equivalence class.  By the duality between $L^1$ and $L^\infty$, we also have:
 \begin{align}
     \|[\mu]\| = \sup_{\{q:\|q\|=1\}} \int_X q\mu .  \label{eq:mu-char}
 \end{align}
This description of the norm is often useful, since the optimization is over the finite-dimensional space of unit norm differentials in $\calQ_X$.

 Taking the infimum of $\|\cdot \|$-length integrals along paths in $\T_{g,n}$ gives rise to the Teichm\"uller distance function.

\subsection{Bundle of quadratic differentials and $GL^+(2,\R)$-action}
\label{sec:bundle}
Given a Teichm\"uller space $\T_{g,n}$, let $\calQ \T_{g,n}$ be the bundle over $\T_{g,n}$ whose fiber over $X$ consists of meromorphic quadratic differentials on $X$ that are holomorphic away from the marked points, and have at worst simple poles at marked points (these differentials have finite area).  We let $\calQ \M_{g,n}$ be the analogous bundle over moduli space.

There is an action of $GL^+(2,\R)$ on $\calQ \M_{g,n}-\{0\}$ that plays a central role in the study of Teichm\"uller geodesics.  By cutting along saddle connections, we can represent every quadratic differential as a set of polygons in the plane, such that every side is paired up with a parallel side of equal length.  Since $GL_2(\mathbb{R})$ acts on such polygons in the plane, it acts on these surfaces.  Two subgroups will play a role in this paper:
\begin{align*}
  \left\{ g_t =
  \left(\begin{matrix}
    e^{-t} & 0 \\
    0 & e^t
  \end{matrix}\right) : t \in \mathbb{R} \right\},
 \text{  \ \    } \left\{ r_{\theta} = \left(
        \begin{matrix}
          \cos \theta & -\sin\theta \\
          \sin \theta & \cos\theta
        \end{matrix}\right) : \theta \in S^1 \right\}.
                        \label{eqn:subgps}
\end{align*}

\paragraph{Multi-component quadratic differentials.}  
\label{par:multi-comp}
Given a multi-component \TM space $\calT=\prod_i\T_i$, define $\calQ\calT:=\prod_i \calQ\calT_i$.   We define the ($L^1$) norm of $q=(q^1,\ldots,q^k) \in \calQ \calT$ by 
\[
\|q\|_1 := \sum_i \|q^i\|.
\]
 We define $\calQ\M$ and its norm similarly.

\subsection{The map $\phi$ from quadratic differentials to tangent vectors}
\label{subsec:phi}
Although quadratic differentials are naturally identified with the cotangent space, there is also an important map from quadratic differentials to the \emph{tangent} space.  

For each $X\in \calT$, we denote by $\calQ_X\calT$ the subset of $\calQ\calT$ that lies over $X$.  We define a map
\begin{align*}
\phi_X& :\calQ_X\calT \to T_X\calT, \\
\phi_X&(q)= \left(\|q^1\| \frac{\bar{q^1}}{|q^1|},\ldots, \|q^k\| \frac{\bar{q^k}}{|q^k|}\right).
\end{align*}
Note that $\frac{\bar{q^i}}{|q^i|}$ is a Beltrami differential representing the unit tangent vector to the geodesic generated by $q^i$. From the \TM existence and uniqueness theorems, it follows that $\phi_X$ is a bijection.  
We also define a map $\phi: \calQ\calT \to T\calT$ such that the restriction of $\phi$ to each fiber $\calQ_X\calT$ agrees with $\phi_X$.  

Each map $\phi_X$ is more than just a bijection:
\begin{lemma} \label{lem:homeo}
    For each $X\in \T$, the map $\phi_X: \calQ_X\T \to T_{X}\calT$ is a homeomorphism.
\end{lemma}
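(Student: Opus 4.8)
The plan is to reduce to a single \TM factor and then show that $\phi_X$ is a continuous, proper bijection between locally compact Hausdorff spaces; such a map is automatically a homeomorphism, since a proper continuous map into a locally compact Hausdorff space is closed, and a continuous closed bijection is a homeomorphism. Because $\calQ_X\calT=\prod_i\calQ_{X_i}\T_{g_i,n_i}$ and $T_X\calT=\prod_i T_{X_i}\T_{g_i,n_i}$ and $\phi_X$ acts coordinatewise, it suffices to treat a single map $\phi_X\colon\calQ_X\T_{g,n}\to T_X\T_{g,n}$. Both of these are finite-dimensional real vector spaces (of dimension $2(3g-3+n)$), hence locally compact and Hausdorff, and $\phi_X$ is already known to be a bijection by \TM existence and uniqueness; we set $\phi_X(0):=0$.

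For continuity, the point to keep in mind is that $q\mapsto\bar q/|q|$ is \emph{not} continuous from $\calQ_X\T_{g,n}$ into $M(X)$ with its $L^\infty$-norm --- near a zero of $q$ the direction $\bar q/|q|$ jumps under arbitrarily small perturbations of $q$. But we only need continuity into the finite-dimensional quotient $T_X\T_{g,n}\cong M(X)/\calQ_X^\perp$. Using the identification of $T_X\T_{g,n}$ with the dual of $\calQ_X\T_{g,n}$ furnished by the pairing $(q',\mu)\mapsto\int_X q'\mu$ recalled in \Cref{sec:diff}, the vector $\phi_X(q)$ is the linear functional
\[
    q'\;\longmapsto\;\int_X q'\cdot\|q\|\,\frac{\bar q}{|q|}\;=\;\|q\|\int_X q'\,\frac{\bar q}{|q|}\,,\qquad q'\in\calQ_X\T_{g,n}\,.
\]
Now if $q_n\to q$ in $\calQ_X\T_{g,n}$ with $q\neq 0$, then the zeros of $q$ form a null set, off which $q_n\to q$ pointwise with $q\neq 0$, so $\overline{q_n}/|q_n|\to\bar q/|q|$ almost everywhere; since this quantity has pointwise absolute value $1$ and each $q'$ lies in $L^1(X)$, dominated convergence gives $\int_X q'\,\overline{q_n}/|q_n|\to\int_X q'\,\bar q/|q|$, while $\|q_n\|\to\|q\|$ by continuity of the norm. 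Hence $\phi_X(q_n)\to\phi_X(q)$ pointwise as functionals on the finite-dimensional space $\calQ_X\T_{g,n}$, which is the same as convergence in $T_X\T_{g,n}$. Continuity at $q=0$ will follow from the norm identity below.

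For properness, I would first record the identity
\begin{equation}
    \|\phi_X(q)\|\;=\;\|q\|\qquad\text{for all }q\in\calQ_X\T_{g,n},\label{eq:norm-id}
\end{equation}
where the left norm is the \TM norm and the right is the area norm. The bound $\|\phi_X(q)\|\le\|q\|$ is immediate from \eqref{eq:mu-char} and the fact that $\bar q/|q|$ has pointwise absolute value $1$: for any $q'$ of unit area norm, the integral $\int_X q'\cdot\|q\|\,\bar q/|q|$ has modulus at most $\|q\|\int_X|q'|=\|q\|$. For the reverse inequality, test against $q'=q/\|q\|$ and use $q\cdot\bar q/|q|=|q|$ pointwise: this gives $\int_X q'\cdot\|q\|\,\bar q/|q|=\int_X|q|=\|q\|$. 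Granting \eqref{eq:norm-id}, the $\phi_X$-preimage of any compact --- hence bounded --- subset of $T_X\T_{g,n}$ is bounded in $\calQ_X\T_{g,n}$, and is closed by continuity, hence compact; so $\phi_X$ is proper. The identity also gives $\|\phi_X(q_n)\|=\|q_n\|\to 0$ whenever $q_n\to 0$, which supplies continuity at $0$. Altogether $\phi_X$ is a continuous proper bijection between locally compact Hausdorff spaces, and is therefore a homeomorphism.

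I expect the continuity step to be the only real obstacle, precisely because of the $L^\infty$-discontinuity of $q\mapsto\bar q/|q|$; it is resolved by working in the finite-dimensional quotient $T_X\T_{g,n}$, where convergence is checked by pairing against a fixed basis of $\calQ_X\T_{g,n}$ and invoking dominated convergence (the zeros of a nonzero quadratic differential being negligible). As an alternative to the properness argument, once continuity and injectivity are known one may instead invoke invariance of domain: $\calQ_X\T_{g,n}$ and $T_X\T_{g,n}$ are manifolds of equal dimension $2(3g-3+n)$, so $\phi_X$ is an open map, hence a homeomorphism.
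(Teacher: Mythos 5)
Your proof is correct, and the continuity step genuinely differs from the paper's.

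The paper reduces to the unit sphere, uses that the area and \TM norms are $C^1$ (so the unit spheres are $C^1$-submanifolds of the same dimension), and then invariance of domain leaves only continuity of $\phi_X$ on the sphere to check. Continuity is then derived from \Cref{lem:converg}: complex geodesics depend continuously on their data (proved via continuity of solutions of the Beltrami equation plus Montel's theorem), and $\phi_X(q)=G_\C(X,q)'(0)$. You instead prove continuity by a purely soft argument: write $\phi_X(q)$ as a linear functional on $\calQ_X\T_{g,n}$ via the tangent--cotangent pairing $\int_X q'\mu$, observe that $\overline{q_n}/|q_n|\to\bar q/|q|$ a.e.\ away from the finite zero set of $q\neq 0$, and invoke dominated convergence (with majorant $|q'|\in L^1$); since $T_X\T_{g,n}$ is finite-dimensional, weak-$*$ convergence of these functionals is the same as convergence in $T_X\T_{g,n}$. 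This avoids Montel's theorem, the $C^1$ regularity of the norms, and the restriction to unit spheres entirely. For the topological conclusion you give two correct routes --- continuous proper bijection between locally compact Hausdorff spaces, with properness via the norm identity $\|\phi_X(q)\|=\|q\|$ (which you prove, where the paper merely asserts norm-preservation), or invariance of domain applied to the full spaces of equal dimension $2(3g-3+n)$. Both are clean alternatives to the paper's sphere-based invariance of domain, and the second is essentially the same idea applied without the sphere reduction. Your approach is the more elementary and self-contained of the two.
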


\paragraph{Complex geodesics.}

To prove \Cref{lem:homeo}, we will use some continuity properties of geodesics; it is fruitful to work with complex geodesics.  

Let $\Delta :=\{z\in \C: |z|<1\}$ be the unit disk. 

\begin{defn}
\label{defn:complex-geod}
 If $q\in \calQ \T_{g,n}$ is unit norm, we  define the
{\em  complex geodesic} \[
G_{\CC}(X,q): \Delta\to \T_{g,n}, \quad 
G_{\C}(X,q)(z)= X_{\phi(zq)},
\]
where $ X_{\phi(zq)}$ is obtained by solving the Beltrami equation for $\phi(zq)$.  

\end{defn}

The map $G_\C$ is connected to $\phi_X$, since the tangent vector at $z=0$ is  %
\[
G_{\C}(X,q)'(0) = \dfrac{\overline{q}}{|q|}.
\]

The next lemma states that complex geodesics depend continuously (in the topology of local uniform convergence) on the initial data.

\begin{lemma}
\label{lem:converg}
Suppose $(X_n,q_n)\in \calQ^1\calT_{g,n}$  is a sequence of unit norm quadratic differentials converging to $(X,q)\in \calQ^1\calT_{g,n}$. Then the complex geodesics $G_{\C}(X,q_n)$ converge locally uniformly to $G_{\C}(X,q)$.

\end{lemma}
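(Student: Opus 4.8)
The plan is to prove \Cref{lem:converg} by reducing to the classical single-factor statement and then combining factors. Since $\calT = \prod_i \calT_{g_i,n_i}$ and a complex geodesic in $\calT$ is by definition the product of the complex geodesics in the factors (this follows from the formula for $\phi$, which acts factorwise), it suffices to prove the statement when $\calT = \T_{g,n}$ is a single \TM space. So assume $q_n \to q$ in $\calQ^1\T_{g,n}$, meaning both the base points $X_n \to X$ and the differentials converge in the total space of the bundle.

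First I would recall the explicit description of the complex geodesic: $G_\C(X,q)(z) = X_{\phi(zq)}$, where $\phi(zq) = |z| \cdot \frac{\bar q}{|q|}$ (using $\|q\|=1$), i.e., the Beltrami differential $\mu_{z,q} := |z|\,\overline{q}/|q|$, which has $\|\mu_{z,q}\|_\infty = |z| < 1$. The heart of the argument is the following continuity principle for the Measurable Riemann Mapping Theorem: if $\mu_n$ is a sequence of Beltrami differentials on a fixed surface with $\|\mu_n\|_\infty \le k < 1$ and $\mu_n \to \mu$ pointwise a.e. (or in a suitable weak sense), then the normalized solutions $w^{\mu_n}$ of the Beltrami equation converge locally uniformly to $w^\mu$; this is the standard dependence-on-parameters statement for quasiconformal maps (see e.g. Ahlfors--Bers). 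The subtlety here is that the base point is also moving, so strictly one should transport everything to a fixed reference surface $X_0$: pick quasiconformal identifications $f_n : X_0 \to X_n$ with $f_n \to f_\infty = f$ (uniformly, with dilatations converging), pull back $q_n$ to quadratic differentials $\tilde q_n$ on $X_0$, note $\tilde q_n \to \tilde q$ in $\calQ_{X_0}$ (this uses that convergence in the total space of $\calQ\T_{g,n}$ unwinds to convergence of differentials on a fixed surface after such identifications — this is essentially the definition of the topology on the bundle), and then work on $X_0$.

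The key step is then: on the fixed surface $X_0$, the composed Beltrami differential representing $G_\C(X_n,q_n)(z)$ — namely the Beltrami coefficient of the composition of $f_n$ with the solution to the Beltrami equation for $|z|\,\overline{\tilde q_n}/|\tilde q_n|$ on $X_n$ — depends continuously on $(\tilde q_n, z)$, uniformly for $z$ in compact subsets of $\Delta$. For this I would observe that $\tilde q_n \to \tilde q$ in $L^1$ together with $\tilde q_n, \tilde q$ being holomorphic (away from marked points) forces local uniform convergence $\tilde q_n \to \tilde q$ away from the zeros and poles of $\tilde q$; hence $\overline{\tilde q_n}/|\tilde q_n| \to \overline{\tilde q}/|\tilde q|$ pointwise a.e. on $X_0$, and the sup-norms are uniformly bounded by $|z| \le r < 1$ on a compact $\{|z|\le r\}$. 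Feeding this into the continuity of solutions to the Beltrami equation (with the additional, routine composition with $f_n$, whose dilatations also converge) yields local uniform convergence of the maps $z \mapsto G_\C(X_n,q_n)(z)$ to $z \mapsto G_\C(X,q)(z)$, as points of $\T_{g,n}$.

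\textbf{The main obstacle} I anticipate is handling the moving base point cleanly: one must be careful that the chosen identifications $f_n : X_0 \to X_n$ can be taken to converge and that the pulled-back differentials converge in $\calQ_{X_0}$, so that the statement really does reduce to a fixed-surface continuity statement. This is a matter of unwinding the definition of the topology on the bundle $\calQ\T_{g,n}$ (and $\calQ^1\calT$) and invoking the standard fact that the solution to the Beltrami equation, together with its dependence on parameters and on composition, is continuous — none of this is deep, but it requires care to state precisely. A secondary point is the passage from $L^1$-convergence of holomorphic quadratic differentials to local uniform convergence away from the divisor of zeros/poles; this follows from the mean-value property (an $L^1$ bound on a disk controls the sup-norm on a smaller disk for holomorphic functions), combined with the fact that the total mass near the (finitely many) singular points is small, uniformly in $n$. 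Once these two points are in place, the lemma follows from the classical Ahlfors--Bers continuity of the measurable Riemann mapping.
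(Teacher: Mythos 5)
Your argument is correct in outline, but it takes a genuinely different route from the paper at the key step. The paper's proof is a two-step affair: first it invokes Ahlfors--Bers dependence of solutions of the Beltrami equation on parameters to get only \emph{pointwise} convergence $G_{\C}(X_n,q_n)(z)\to G_{\C}(X,q)(z)$ for each fixed $z$, and then it upgrades to local uniform convergence by a normal-families argument -- the complex geodesics are holomorphic and distance non-increasing, hence locally uniformly bounded (e.g.\ via the Bers embedding of $\T_{g,n}$ as a bounded domain), so Montel/Vitali applies. You instead aim for local uniform convergence directly, by transporting everything to a fixed reference surface, showing $\overline{q_n}/|q_n|\to \overline{q}/|q|$ a.e.\ away from the zeros (via $L^1$-to-locally-uniform convergence of holomorphic differentials), and then feeding the uniformly bounded Beltrami coefficients (norm $\le r<1$ on $|z|\le r$) into parametrized Ahlfors--Bers continuity. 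What your route buys is that it avoids any appeal to holomorphy of the disks and to Montel, and it makes explicit the moving-base-point bookkeeping that the paper leaves implicit; what it costs is exactly the uniformity in $z$ that the paper gets for free from normality -- your phrase ``feeding this into the continuity of solutions'' hides a genuine (if routine) argument, e.g.\ a compactness/diagonal argument: if convergence failed to be uniform on $\{|z|\le r\}$, pick $z_n\to z_0$ witnessing this and note $\mu_n(z_n)\to\mu(z_0)$ a.e.\ with uniform bound, contradicting the single-parameter continuity statement. Two small points to fix: $\phi(zq)$ is $\bar z\,\overline q/|q|$ (of norm $|z|$), not $|z|\,\overline q/|q|$; and the pullbacks $f_n^*q_n$ under quasiconformal identifications are not holomorphic, so they do not lie in $\calQ_{X_0}$ -- you should phrase the convergence as locally uniform (or $L^1$) convergence of these pullbacks on the fixed surface, which is the standard ``quasiconformal topology'' description of convergence in the bundle and is all your argument actually needs.
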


\begin{proof}

Recall that the solutions of the Beltrami equation depend continuously on the parameters (in fact, analytically, see \cite{ab60}).  Using this and the convergence of $q_n$, we get point-wise convergence
\[
\lim_{n\to\infty} G_{\C}(X_n,q_n)(z) = G_{\C}(X,q)(z) 
\]
 for all $z\in \Delta.$ The functions $G_{\C}(X_n,q_n)$ are locally uniformly bounded, since each complex geodesic is an isometry for the Teichm\"uller metric. By Montel's theorem, $G_{\C}(X_n,q_n)$ converges locally uniformly to $G_{\C}(X,q)$.

\end{proof}

\begin{proof}[Proof of \Cref{lem:homeo}]
It suffices to deal with the case of a single factor $\T=\calT_{g,n}$. 
Note that in this case $\phi_X$ is norm-preserving, and hence maps the unit sphere in $\calQ_X \T$ to the unit sphere in $T_X\T$.  
Since \[
\phi_X(\lambda q)= |\lambda| \phi(q), \ \lambda\in \mathbb{C}^*,
\] it is enough to show that $\phi_X$ restricts to a homeomorphism between unit spheres. 
The area norm and the \TM norm  on $\calQ\calT$ and $T\calT$, respectively, are $C^1$, thus the unit spheres are $C^1$-submanifolds of the same dimension.  So by invariance of domain, it suffices to show that $\phi_X$ restricted to the unit sphere is continuous.

To see this, suppose we have a sequence of unit area quadratic differentials with
\[\lim_{n\to\infty }(X_n,q_n) = (X,q).
\]
The complex geodesics $G_{\C}(X_n,q_n)$ converge locally uniformly to $G_{\C}(X,q)$, by \Cref{lem:converg}. 
Thus 
\[
\phi_X(q)= G_{\C}(X,q)'(0)=  \lim_{n\to\infty} G_{\C}(X_n,q_n)'(0) =\lim_{n\to\infty} \phi(X_n,q_n). 
\]
\end{proof}

\paragraph{Alternate characterization of $\phi$.}
For later use in \Cref{sec:tang-quad}, we show the following.  
\begin{lemma}
\label{lem:cvx-uniq}
Let $\calT_{g,n}$ be a \TM space. Let $v \in \calT_X \calT_{g,n}$ non-zero and define $q:= \frac{\phi_X^{-1}(v)}{\|\phi_X^{-1}(v)\|}$. %
Then $q$ is uniquely characterized among elements of $\calQ \T_{g,n}$ by the properties:
\[
\int_{X} qv = \|v\|, \quad \|q\| = 1.
\]
\end{lemma}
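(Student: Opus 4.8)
The plan is to unwind the definition of $q$ in terms of $\phi_X$, check the two displayed properties by a direct computation, and then prove uniqueness using the equality case of the triangle inequality for $L^1$-integrals together with the rigidity of a meromorphic function that is real-valued on a large set.

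\textbf{Setup.} Write $q_0 := \phi_X^{-1}(v)$ and $q := q_0/\|q_0\|$, a unit-norm element of $\calQ_X\calT_{g,n}$. By the definition of $\phi_X$ on a single factor, the tangent vector $v$ is represented by the Beltrami differential $\|q_0\|\,\overline{q_0}/|q_0| = \|q_0\|\,\bar q/|q|$ (note $\overline{q_0}/|q_0| = \bar q/|q|$ since $q_0$ and $q$ differ by a positive scalar). Feeding this representative into the dual formula \eqref{eq:mu-char}, and using that $\bigl|\int_X q''\,\bar q/|q|\bigr| \le \int_X |q''| = 1$ for every unit-norm $q''$, with equality at $q'' = q$, we get $\sup_{\|q''\|=1}\int_X q''\,\bar q/|q| = 1$, hence $\|v\| = \|q_0\|$. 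Thus $q = q_0/\|v\|$, which is exactly the $q$ in the statement.

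\textbf{The properties hold.} By construction $\|q\| = 1$, and
\[
\int_X q\,v \;=\; \int_X q\cdot \|v\|\,\frac{\bar q}{|q|} \;=\; \|v\|\int_X |q| \;=\; \|v\|.
\]
\textbf{Uniqueness.} Suppose $q' \in \calQ_X\calT_{g,n}$ (this is implicit, since $\int_X q'v$ must be defined) satisfies $\|q'\| = 1$ and $\int_X q'\,v = \|v\|$. Since $v$ is represented by $\|v\|\,\bar q/|q|$, this says $\int_X q'\,\bar q/|q| = 1$. Taking real parts and using $\Re(w)\le|w|$ pointwise,
\[
1 \;=\; \Re\!\int_X q'\,\frac{\bar q}{|q|} \;\le\; \int_X \Bigl|q'\,\frac{\bar q}{|q|}\Bigr| \;=\; \int_X |q'| \;=\; 1,
\]
so equality holds throughout; since $|q|>0$ off a finite set, this forces $q'\,\bar q/|q| \ge 0$ almost everywhere on $X$. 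Now $h := q'/q$ is a meromorphic function on the compact Riemann surface $X$, not identically zero, and $q'\,\bar q/|q| = h\,|q|$, so $h \ge 0$ a.e. Because $\Im h$ is harmonic away from the finitely many zeros and poles of $h$ (a connected open set) and vanishes on a set of positive measure, it vanishes identically; hence $h$ is a nonnegative real constant $c$, and then $1 = \|q'\| = c\,\|q\| = c$, so $q' = q$.

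\textbf{Main obstacle.} The only step that needs a little care is the final rigidity claim, that $h$ is forced to be constant from the information that it is real and nonnegative almost everywhere. I would justify it either via the open mapping theorem (a nonconstant holomorphic function cannot map an open set into $[0,\infty)$) or via the fact that the zero set of a nonzero real-analytic function on a connected open set has measure zero, applied to $\Im h$. Everything else is routine bookkeeping with the definitions of $\phi_X$, the area norm, and the \TM norm, so I expect the write-up to be short.
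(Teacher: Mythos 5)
Your proof is correct, and the bookkeeping part (showing $\|v\|=\|q_0\|$, so $q=q_0/\|v\|$, and then verifying the two displayed properties by direct integration) matches the paper's computation, which likewise uses that $\phi_X$ is norm-preserving. Where you diverge is the uniqueness step. The paper cites strict convexity of the relevant norm from Gardiner (\cite[Section 9.3, Lemma 3]{gardiner87}) and then invokes the supporting hyperplane theorem: the linear functional $q_0 \mapsto \int_X q_0 v$ on the strictly convex unit ball of $\calQ_X\T_{g,n}$ has a unique maximizer. You instead argue directly: equality in $\Re(w)\le|w|$ under the integral forces $q'\,\bar q/|q|\ge 0$ a.e., hence the meromorphic function $h=q'/q$ takes values in $[0,\infty)$ a.e., and the open mapping theorem (or real-analyticity of $\Im h$) forces $h$ to be a constant, necessarily $1$ since $\|q'\|=\|q\|=1$. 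This is essentially the same equality-case mechanism that underlies Gardiner's strict-convexity lemma (his proof runs through $|q+q'|=|q|+|q'|$ a.e. $\Rightarrow q'/q\ge 0$ a.e. $\Rightarrow$ constant), so you have inlined the argument rather than black-boxed it. The tradeoff is a slightly longer write-up in exchange for a self-contained and, to my mind, more transparent proof that does not require the reader to unpack the duality between strict convexity of $\calQ_X$ and the variational description of the Teichm\"uller norm. Both routes are valid.
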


\begin{proof}

    By \eqref{eq:mu-char}, we have 
    \begin{align}
     \|v\| %
                &= \sup_{\{q_0:\|q_0\| \leq 1\}} \int_{X} q_0 v. \label{eq:sup}
 \end{align}
 Now by \cite[Section 9.3, Lemma 3]{gardiner87}, the Teichm{\"u}ller norm is strictly convex.  So by the supporting hyperplane theorem, there is a unique $q_0$ achieving the supremum above, which gives the uniqueness of differential with the required two properties.

 Now let $q'=\phi_X^{-1}(v)$, so $q= \frac{q'}{\|q'\|}$.
 By the definition of $\phi_X$, we have $v=\phi_X(q') = \|q'\| \cdot \bar{q'}/q'$.  
 Then

\begin{align*}
     \int_{X} q v &= \int_{X} \frac{q'}{\|q'\|}\|q'\| \frac{\bar{q'}}{|q'|} =\int_X |q'| = \|q'\| = \|v\|,
 \end{align*}
 where for the last equality we have used that $\phi_X$ is norm-preserving (for this we need that $\T_{g,n}$ is a single \TM space rather than a product).  
 This means that $q$ has the required properties, completing the proof.  
\end{proof}

\subsection{Exponential map} 
Let $\T_i:= \T_{g_i,n_i}$ be a single Teichm\"uller space.  Given $w \in T\T_i$, let $\gamma_w:\R \to \T_i$ be the unique geodesic with $\gamma_i'(0)=w$. 
We define $E^i:T\T_i \to \T$ by $E^i(w) := \gamma_w(1)$, and 
\begin{align*}
    &E:T \T \to \T, \\
    &E(v) = E((v^1,\ldots,v^k)) := (E^1(v^1),\ldots, E^k(v^k))
\end{align*}
(we use the letter $E$ since this is an analogue of the exponential map for a Riemannian manifold).

Given $X\in \T$, we denote by $E_X$ the restriction $E|_{T_X\T}$.

\begin{lemma}
    \label{lem:exp-homeo}
    For each $X=(X_1,\ldots,X_k) \in \T$, the map $E_X:T_X \T \to \T$ is a homeomorphism.  
\end{lemma}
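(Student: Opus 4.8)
The plan is to reduce to the case of a single Teichm\"uller space and there use the interplay between $E_X$ and the map $\phi_X$ together with Teichm\"uller's existence and uniqueness theorems. Since $E = (E^1,\ldots,E^k)$ is a product map and a product of homeomorphisms is a homeomorphism, it suffices to treat a single factor $\T_i = \T_{g,n}$; so assume $\T = \T_{g,n}$ from now on. First I would establish that $E_X$ is a bijection. Given any $Y \in \T_{g,n}$ with $Y \neq X$, Teichm\"uller's existence and uniqueness theorems provide a unique unit-speed geodesic $\gamma$ from $X$ to $Y$; writing $d = d_{\mathcal T}(X,Y)$ and $w = \gamma'(0)$ (the unit tangent vector at $X$), the vector $v = dw$ is the unique tangent vector with $E_X(v) = Y$, because any such $v$ generates a geodesic through $Y$ and uniqueness of Teichm\"uller geodesics forces it to be a reparametrization of $\gamma$ with the correct length. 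The case $Y = X$ corresponds to $v = 0$. Hence $E_X$ is a bijection.

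Next I would prove continuity of $E_X$. Here I would use the factorization through $\phi_X$: writing a nonzero vector $v = \phi_X(q)$ for a unique $q \in \calQ_X\T$, the geodesic $\gamma_v$ is exactly the real geodesic obtained from the complex geodesic $G_\C(X, q/\|q\|)$, namely $\gamma_v(t) = G_\C(X, q/\|q\|)(t\|q\|/\text{(something)})$ — more precisely $E_X(v) = G_\C(X, q_0)(z)$ where $q_0 = q/\|q\|$ and $z$ is the real number with $|z| < 1$ corresponding to hyperbolic distance $\|v\|$ along the geodesic (using that complex geodesics are isometric embeddings of $(\Delta, \text{hyperbolic})$). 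So $E_X$ is the composition of: the homeomorphism $\phi_X^{-1}: T_X\T \setminus\{0\} \to \calQ_X\T \setminus \{0\}$ from \Cref{lem:homeo}; the normalization $q \mapsto (q/\|q\|, \|q\|)$; the parametrization $(q_0, r) \mapsto (q_0, \tanh(r/2))$ or whatever the correct hyperbolic-to-disk conversion is; and the evaluation $(q_0, z) \mapsto G_\C(X,q_0)(z)$. The first three are clearly continuous; continuity of the last, jointly in $(q_0, z)$, follows from \Cref{lem:converg} (local uniform convergence of complex geodesics) combined with the fact that locally uniformly convergent holomorphic maps have convergent values. Continuity of $E_X$ at $v = 0$ is immediate since $E_X$ moves points a Teichm\"uller distance exactly $\|v\|$ from $X$.

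Finally, to upgrade the continuous bijection $E_X$ to a homeomorphism I would invoke invariance of domain, exactly as in the proof of \Cref{lem:homeo}: $T_X\T$ is a finite-dimensional real vector space and $\T_{g,n}$ is a manifold of the same (real) dimension, so a continuous injection between them is automatically an open map, hence a homeomorphism onto its image, which is all of $\T_{g,n}$ by surjectivity. I expect the main obstacle to be making the identification of $E_X$ with the evaluation of complex geodesics fully precise — in particular, correctly matching up the Teichm\"uller-metric arclength parametrization of $\gamma_v$ with the hyperbolic-metric parametrization of $G_\C(X, q_0)$ along the real axis, and handling the behavior near $v = 0$ and near $\|v\| \to \infty$ (i.e. $z \to \partial \Delta$) so that joint continuity in $(q_0, z)$ is genuinely established from \Cref{lem:converg} rather than just asserted. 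Everything else is a routine assembly of Teichm\"uller's theorems, \Cref{lem:homeo}, \Cref{lem:converg}, and invariance of domain.
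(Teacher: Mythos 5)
Your proof is correct and takes essentially the same approach as the paper's: both reduce to a single factor, factor $E_X$ through the homeomorphism $\phi_X^{-1}$ of \Cref{lem:homeo}, and then invoke invariance of domain. The paper writes the second factor explicitly as the map $F(q)$ solving the Beltrami equation for $\tanh(\|q\|_1)\,\bar q/|q|$ (the constant is $\tanh(r)$, not $\tanh(r/2)$), which is exactly your $G_\C(X,q/\|q\|)(\tanh\|q\|)$, and cites continuity of Beltrami solutions directly rather than routing through \Cref{lem:converg}, but the substance is the same.
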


\begin{proof}
    By definition, $E_X = (E^1|_{T_{X_1}\T_1},\ldots, E^k|_{T_{X_k}\T_k})$.  It suffices to show that each $E_{X_i}:=E^i|_{T_{X_i}\T_i}: T_{X_i}\T_i \to \T_i$ is a homeomorphism. This map is a bijection, by the Teichm\"uller uniqueness and existence theorems.  
    
    To show continuity of $E_{X_i}$, we factor into a composition of two maps.  We can first map $T_{X_i}\T_i$ to the space of quadratic differentials on $X_i$.  This is done via $\phi^{-1}|_{T_{X_i}}$, which by \Cref{lem:homeo} is a homeomorphism $T_{X_i}\T_i \to \calQ\T_i|_{X_i}$ (recall that $\phi$ takes $q$ to $\|q\|_1\frac{\bar q}{|q|}$). We then apply the map $F:\calQ\T_i|_{X_i}\to \T_i$ defined so that $F(q)$ is given by solving the Beltrami equation for the Beltrami differential $\lambda \frac{\bar q}{|q|},$ where 
    \begin{align*}
        \lambda = \frac{e^{2\|q\|_1}-1}{e^{2\|q\|_1}+1}.
    \end{align*}
    Since the solutions of the Beltrami equation depend continuously on the data, this map $F$ is continuous.  

    The $\lambda$ above was chosen so that $E_{X_i}$ equals the composition $F \circ \phi^{-1}|_{T_{X_i}}$.  Since both of these maps are continuous, so is $E_{X_i}$.  So we have a continuous, injective map between manifolds of the same dimension; by invariance of domain it is open, and hence its inverse is also continuous.

\end{proof}

\section{Notions of totally geodesic in multi-component moduli spaces} 
\label{sec:tot-geod}
From the perspective of Teichm\"uller geometry, there are several different notions of a totally geodesic subset of a product of Teichm\"uller or moduli spaces.  We discuss the definitons, and prove relations among them; most of these turn out to be equivalent.  

\begin{defn}[$L^p$-totally geodesic]
\label{defn:Lp-tot-geod}
    For $p$ with $1 \le p \le \infty$, we endow $\T=\T_{g_1,n_1} \times \cdots \times \T_{g_k,n_k}$ with the $L^p$-Teichm\"uller  norm. That is, for $v=(v^1,\ldots,v^k) \in T\T$, we define
    \begin{align*}
        \|v\|_p := \begin{cases}
            k^{-1/p} \left( \|v^1\|^p + \cdots + \|v^k\|^p \right)^{1/p} &\text{ if } p<\infty \\
            \max_i \|v^i\| &\text{ if } p=\infty,
        \end{cases}
    \end{align*}
    where $\|\cdot\|$ denotes the Teichm\"uller norm on each $\T_{g_i,n_i}$.  (The reason we include the $k^{-1/p}$ factor is so that for a $v$ with all components of the same norm $c$, we will have $\|v\|_p=c$.) For each $p$, this gives $\T$ the structure of a \emph{Finsler manifold}.  
    Then we say that $N\subset \T$ is \emph{$L^p$-totally geodesic} if for any distinct $X,Y\in N$, any $L^p$-geodesic connecting $X$ to $Y$ lies in $N$.  We say that $N\subset \M$ is \emph{$L^p$-totally geodesic} if any lift of $N$ to $\T$ is $L^p$-totally geodesic.  By integrating the norm $\|v\|_p$ along paths, we get a path metric denoted $d_p$ on $\T$.  
\end{defn}

\begin{defn}[Infinitesimally totally geodesic]
 Let $N\subset \M$ be an irreducible subvariety.  We say that $N$ is \emph{infinitesimally totally geodesic} if there exists a dense, Zariski open subset $N'$ of $N$ such that for any tangent vector $v\in TN'$, the geodesic tangent to $v$ lies entirely in $N$.
\end{defn}

\begin{prop}
    \label{prop:tot-geod-equiv}
    Let $N\subset \M$ be a 
    algebraic subvariety.  For any $p$ with $1<p<\infty$, the following conditions are equivalent: 
    \begin{enumerate}[(1)]
        \item $N$ is totally geodesic (in the sense of \Cref{defn:tot-geod}).  
        \item $N$ is infinitesimally totally geodesic.
        \item $N$ is $L^p$-totally geodesic  
    \end{enumerate}
\end{prop}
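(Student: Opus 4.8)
The plan is to prove the cycle of implications $(1)\Rightarrow(2)\Rightarrow(3)\Rightarrow(1)$, working first with a lift $N'$ of $N$ to $\T$ and reducing all statements to statements about a single Teichm\"uller factor plus the product structure. The main conceptual point is that a geodesic in $\T$ (in the sense of \Cref{defn:geod}) and an $L^p$-geodesic in the Finsler manifold $(\T,\|\cdot\|_p)$ are, up to reparametrization, the \emph{same} objects: both are products of reparametrized Teichm\"uller geodesics in the factors, because the $L^p$-norm is strictly convex for $1<p<\infty$ and a path minimizes the $L^p$-length iff each coordinate path is a (suitably constant-speed) Teichm\"uller geodesic. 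I would state and prove this reparametrization lemma first, since it is what makes $(1)$ and $(3)$ comparable: once we know that the image of a geodesic in the sense of \Cref{defn:geod} equals the image of an $L^p$-geodesic with the same endpoints, and that the $L^p$-geodesic between two points is unique up to reparametrization (again by strict convexity of $\|\cdot\|_p$, using Teichm\"uller uniqueness in each factor via \Cref{lem:exp-homeo}), the equivalence $(1)\Leftrightarrow(3)$ is essentially immediate.

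For $(1)\Rightarrow(2)$: assume $N'$ (a lift) is totally geodesic. Take $N'_{\mathrm{sm}}$ the smooth locus; for $v\in T_X N'_{\mathrm{sm}}$ I want the geodesic $\gamma$ with $\gamma'(0)=v$ to lie in $N'$. Pick a sequence $Y_j\in N'$ converging to $X$ along the direction $v$ (possible since $N'$ is a manifold near $X$ and we can move within $N'$). The geodesic segments $[X,Y_j]$ lie in $N'$ by hypothesis; using \Cref{lem:exp-homeo} (continuity of $E_X$ and its inverse) and \Cref{lem:converg}, these segments converge, after rescaling, to the full geodesic $\gamma$ tangent to $v$. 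Since $N'$ is closed (as $N$ is a subvariety, hence $N'$ is analytic and closed in $\T$ — or at least we can work with its closure and then note that a totally geodesic set is contained in an analytic set of the same dimension), the limit $\gamma$ lies in $N'$. Descending to $\M$ gives infinitesimal total geodesicity with $N'=N_{\mathrm{sm}}$; one must be mildly careful that the exceptional set where the argument fails is contained in $N\setminus N_{\mathrm{sm}}$, a proper subvariety, which suffices for the definition.

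For $(2)\Rightarrow(1)$ (or directly $(2)\Rightarrow(3)$): given $X,Y$ in a lift $N'$, I want a geodesic through them inside $N'$. The subtlety — and I expect this to be the main obstacle — is that infinitesimal total geodesicity only controls geodesics \emph{tangent to the smooth locus}, whereas $X$ or $Y$ might a priori be singular points of $N'$, and moreover the geodesic from $X$ to $Y$ could leave the Zariski-open set $N'_{\mathrm{reg}}$. The standard fix: first prove the statement for $X,Y$ in a small ball inside $N'_{\mathrm{reg}}$ where one can use a local foliation/flow-box argument — the set of points reachable from $X$ by geodesics tangent to $N'_{\mathrm{reg}}$ is open in $N'$ near $X$ and contained in $N$ by hypothesis (using closedness of $N$ again). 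Then use irreducibility of $N$ and a connectedness/analytic-continuation argument to propagate: the condition "every geodesic from $X$ stays in $N$" is a closed condition on $X\in N'$, and it holds on the Zariski-dense open $N'_{\mathrm{reg}}$, hence on all of $N'$; and for fixed such $X$, the set of $Y$ connected to $X$ by a geodesic in $N'$ is closed and contains a neighborhood of $X$ in $N'_{\mathrm{reg}}$, hence all of $N'$. Throughout, the product structure is handled factor-by-factor: a geodesic in $\T$ is tangent to $TN'$ iff its defining data in each $\T_{g_i,n_i}$ is, and \Cref{lem:exp-homeo} provides the needed homeomorphism $E_X$ to transfer openness/closedness between $T_XN'$ and its image in $\T$. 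The genuinely delicate step is ensuring the geodesic joining two regular points does not wander into the singular locus; handling this cleanly is where the real work lies, and I would lean on the fact that $N\setminus N'_{\mathrm{reg}}$ has positive codimension together with the local flow-box description near regular points to run an open–closed argument on the set of pairs $(X,Y)$ that are "geodesically connected within $N'$."
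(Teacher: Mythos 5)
Your overall decomposition into the three implications $(1)\Leftrightarrow(3)$, $(1)\Rightarrow(2)$, and $(2)\Rightarrow(1)$ matches the paper's structure, and your argument for $(1)\Leftrightarrow(3)$ (strict convexity of the $L^p$-norm plus Teichm\"uller uniqueness per factor forces unique $L^p$-geodesics, which then coincide with the multi-speed geodesics of \Cref{defn:geod}) is essentially the argument the paper gives. The differences lie in $(1)\Rightarrow(2)$ and in the level of rigor in $(2)\Rightarrow(1)$.

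For $(1)\Rightarrow(2)$, the paper's \Cref{prop:global-tangent} argues as follows: for $X$ in the regular locus, $E_X^{-1}(\tilde N)\subseteq T_X\tilde N$ (a geodesic lying in $\tilde N$ has its velocity in $T_X\tilde N$), $E_X^{-1}(\tilde N)$ is $\R$-homogeneous, and $E_X^{-1}(\tilde N^{\mathrm{reg}})$ is open in $T_X\tilde N$ by invariance of domain; homogeneity plus an open neighborhood of $0$ forces $E_X^{-1}(\tilde N)=T_X\tilde N$. Your sequential argument is genuinely different and has a gap: to conclude that the geodesics through $X$ and $Y_j$ converge to the geodesic tangent to $v$, you need the normalized inverse-exponential directions $E_X^{-1}(Y_j)/\|E_X^{-1}(Y_j)\|$ to converge to $v/\|v\|$ whenever $Y_j\to X$ with secant direction $v$. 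That is precisely the statement that $E_X^{-1}$ is differentiable at $0$ with derivative the identity. But \Cref{lem:exp-homeo} only gives that $E_X$ is a homeomorphism; no differentiability of the Teichm\"uller exponential at the origin is established anywhere in the paper, and the regularity of Teichm\"uller geodesic flow is delicate enough that the paper's invariance-of-domain argument appears specifically designed to avoid it. Without this, your geodesics through $X$ and $Y_j$ might, along a subsequence, converge to a geodesic tangent to some direction $w\neq v$, and you cannot conclude that the geodesic tangent to the \emph{given} $v$ lies in $N'$.

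For $(2)\Rightarrow(1)$ you correctly identify the open--closed strategy, which is what \Cref{prop:inf-geod-tot-geod} does, but the two load-bearing steps are left vague. Your claim that ``the set of $Y$ connected to $X$ by a geodesic in $N'$ is closed'' needs justification; the paper gets it cleanly by noting that the relevant set is $E_X(T_X\tilde N)\cap\tilde N'$ and that $E_X(T_X\tilde N)$ is closed in $\T$ (since $T_X\tilde N$ is a closed subspace and $E_X$ a homeomorphism). And the openness step (your ``flow-box argument'') should be the invariance-of-domain argument applied to $E_X|_{T_X\tilde N}$ restricted to the preimage of $\tilde N'$. Finally, the passage from points in $\tilde N'$ to arbitrary points of $\tilde N$ (singular or outside the Zariski-open locus) requires the density-plus-closedness argument that the paper carries out at the end of that proof, and your sketch does not address it.
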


\begin{proof}
We break into three parts:
\begin{itemize}
    \item $(2) \implies (1)$.  This is \Cref{prop:inf-geod-tot-geod}.  

    \item $(1) \implies (2)$. This is \Cref{prop:global-tangent}.  

    \item $(1) \Longleftrightarrow (3)$. Note that the geodesics in $\T$ in \Cref{defn:geod} are also $L^p$-geodesics.  On the other hand, a convexity argument, using that the Teichm\"uller metric on each $\T_{g_i,n_i}$ is uniquely geodesic, shows that $\T$ is uniquely $L^p$-geodesic (this is done in the context of general geodesic metric spaces in \cite{ks15}).  Thus the two classes of geodesics in $\T$ coincide.  It follows that the notions of totally geodesic and $L^p$-totally geodesic for subsets of $\T$ or $\M$ coincide.  
\end{itemize}
\end{proof}

\begin{rmk}
    For $p=1,\infty$, the geodesics of \Cref{defn:geod} are $L^p$-geodesics.  However $L^1$ and $L^\infty$ do not give uniquely geodesic spaces, and thus a non-trivial totally geodesic $N$ will typically fail to be $L^1$ or $L^\infty$-totally geodesic.  The $L^\infty$ metric coincides with the intrinsic Kobayashi distance on $\T$ coming from its complex structure.  This is because the Teichm\"uller and Kobayashi metrics on $\T_{g,n}$  coincide (by results of Royden), and the Kobayashi metric on a finite product of complex spaces is the supremum of the metrics on the factors (see \cite[Theorem 3.1.9]{kobayashi98}). 
\end{rmk}

\begin{prop}
\label{prop:inf-geod-tot-geod}
    Let $N\subset \M$ be a %
    subvariety that is infinitesimally totally geodesic.  Then $N$ is totally geodesic.  
\end{prop}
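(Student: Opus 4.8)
The plan is to deduce the global totally-geodesic property from the infinitesimal one by a continuity/connectedness argument on the subvariety $N$, combined with the continuity of the exponential-type map $E$ established in \Cref{lem:exp-homeo}. Let $N' \subset N$ be the dense Zariski-open subset furnished by the infinitesimal hypothesis: for every $v \in TN'$, the geodesic tangent to $v$ lies in $N$. I will work with a fixed lift, also called $N$, to the product Teichm\"uller space $\T$, and the corresponding lift $N'$; since $N$ is a subvariety, $N$ is an analytic set and $N'$ is dense in it in the analytic topology as well.

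\medskip

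First I would show that for \emph{every} $X \in N'$ and every $Y \in N$, the geodesic segment from $X$ to $Y$ lies in $N$. Fix such $X$. By \Cref{lem:exp-homeo}, $E_X : T_X\T \to \T$ is a homeomorphism; let $W := E_X^{-1}(N) \subset T_X\T$, a closed set (since $N$ is closed in $\T$, being a subvariety — here one may need to pass to the closure, but geodesics between points of $N$ are what we care about, and $N$ analytic irreducible is locally closed; I would phrase everything in terms of the relevant geodesic segments staying in $N$ rather than its closure, or note that the closure of $N$ in $\T$ is still the lift of a subvariety of $\M$ if $N$ is already a closed subvariety). The hypothesis says exactly that $E_X$ maps each ray $\R_{\ge 0}\cdot v$ with $v \in T_XN'$ into $N$, i.e. $\R_{\ge 0}\cdot v \subset W$ for all $v \in T_XN'$. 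Since $N'$ is open and dense in $N$ and $E_X$ is a homeomorphism, $E_X^{-1}(N')$ is open and dense in $W$; I claim $W$ is a union of rays through the origin, i.e. $\R_{\ge 0}\cdot W = W$. Indeed, for $w \in W$ with $E_X(w) \in N'$ this is immediate from the hypothesis applied at $X$ (the geodesic through $X$ tangent to $w$ passes through $E_X(w)$, hence $w$ and all its nonnegative multiples lie in $W$); and a general $w \in W$ is a limit of such points $w_n$ with $E_X(w_n)\in N'$ (by density of $E_X^{-1}(N')$ in $W$), so $t w = \lim t w_n \in W$ by closedness of $W$. Thus $W$ is star-shaped with respect to $0$, and in particular for any $Y \in N$, writing $Y = E_X(w)$ with $w \in W$, the whole segment $\{E_X(tw) : t\in[0,1]\}$ — which is precisely the geodesic segment from $X$ to $Y$ — lies in $N$.

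\medskip

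Next, I would remove the restriction $X \in N'$. Let $X, Y \in N$ be arbitrary and distinct. Choose a sequence $X_n \in N'$ with $X_n \to X$ (possible since $N'$ is dense in $N$). By the previous paragraph, for each $n$ the geodesic segment $\sigma_n$ from $X_n$ to $Y$ lies in $N$. Now use continuity: write $\sigma_n(t) = E_{X_n}(t\, w_n)$ where $w_n = E_{X_n}^{-1}(Y)$; since $E^{-1} : \T \times \T \to T\T$, $(A,B) \mapsto E_A^{-1}(B)$, is continuous (this is the joint continuity underlying \Cref{lem:exp-homeo}, which I would note follows from the continuity of solutions of the Beltrami equation in all parameters as used there, applied factorwise), we get $w_n \to w := E_X^{-1}(Y)$, and then $\sigma_n(t) \to E_X(tw)$ for each $t$, uniformly on $[0,1]$. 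Each $\sigma_n([0,1]) \subset N$ and $N$ is closed in $\T$, so the limiting segment $\{E_X(tw): t \in [0,1]\}$ — the geodesic from $X$ to $Y$ — lies in $N$. This is exactly the condition that $N$ is totally geodesic in the sense of \Cref{defn:tot-geod}, and the statement for the subvariety of $\M$ follows since the argument was carried out on an arbitrary lift. Finally, descending to $\M$: a geodesic in $\M$ is by definition the projection of a geodesic in $\T$, and we have produced, for $X,Y$ in a lift $N$, a geodesic of $\T$ in $N$ with those endpoints; projecting gives a geodesic of $\M$ lying in the subvariety with the required endpoints.

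\medskip

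The main obstacle I anticipate is the bookkeeping around closedness: a subvariety $N \subset \M$ need not be closed, so its lift need not be closed in $\T$, and limits of geodesic segments with endpoints in $N$ could a priori escape $N$ into $\overline N \setminus N$. I would handle this either by (a) observing that the relevant limiting points land in $N$ because the endpoints $X,Y$ are fixed in $N$ and the segments are pinned down by their endpoints plus the fact that $N$ is locally closed and the segments accumulate only at interior points of the construction, or more robustly by (b) noting that the definition of totally geodesic only requires the geodesic between two \emph{given} points of $N$ to lie in $N$, so it suffices to prove the claim for $X \in N'$ and then, for general $X$, approximate and invoke that $N$ being an irreducible analytic subvariety is closed in a neighborhood of any of its points — and the limiting segment, being contained in a compact subset of such a neighborhood once $Y$ and $X$ are fixed, stays in $N$. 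I would also double-check the claim that $E_X^{-1}(N')$ is dense in $W = E_X^{-1}(N)$: this uses that $N'$ is Zariski-open and dense in the \emph{irreducible} variety $N$, hence dense in the analytic topology, and that $E_X$ is a homeomorphism; this is where irreducibility of $N$ (part of the hypothesis) is genuinely used.
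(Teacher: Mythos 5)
Your argument breaks down at the star-shapedness claim for $W=E_X^{-1}(N)$. You assert that for $w\in W$ with $E_X(w)\in N'$, the ray $\R_{\ge 0}\cdot w$ lies in $W$ ``immediately from the hypothesis applied at $X$''. But the infinitesimal hypothesis only applies to vectors $v\in T_X\tilde N$, i.e.\ vectors \emph{tangent to the subvariety} at a point of $N'$; it says nothing about an arbitrary $w\in T_X\T$ whose time-one geodesic endpoint $E_X(w)$ happens to land in $N'$. Whether the geodesic from a point $X\in N'$ to a point of $N'$ stays in $N$ (equivalently, whether such a $w$ is in fact tangent to $\tilde N$) is precisely what the proposition asserts, so this step is circular, and everything downstream of it (density of $E_X^{-1}(N')$ in $W$, the closedness argument, the segment from $X$ to $Y$) rests on it. The paper's proof is devoted exactly to closing this hole: it restricts $E_X$ to $T_X\tilde N$, shows that the image of $S=f^{-1}(\tilde N')$ under $f=E_X|_{T_X\tilde N}$ is open in $\tilde N'$ by invariance of domain (injectivity coming from uniqueness of geodesics) and closed in $\tilde N'$ because $E_X(T_X\tilde N)$ is closed, and then uses connectedness of $\tilde N'$ (from irreducibility of $N$) to conclude that $E_X(T_X\tilde N)$ contains all of $\tilde N'$; only after that can the infinitesimal hypothesis be applied to the vector $E_X^{-1}(Y)$. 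Your proposal contains no substitute for this open--closed/connectedness step.

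Two smaller issues: your treatment of $X\notin N'$ invokes joint continuity of $(A,B)\mapsto E_A^{-1}(B)$ in the basepoint, which \Cref{lem:exp-homeo} does not provide (it treats each $E_X$ separately), and it leans on closedness of the lift of $N$ in $\T$, which you yourself flag as delicate. The paper avoids both: for $Y\in\tilde N\setminus\tilde N'$ it approximates $Y$ by $Y_n\in\tilde N'$, uses only the single homeomorphism $E_X$ together with the fact that $T_X\tilde N$ is closed in $T_X\T$ to conclude $E_X^{-1}(Y)\in T_X\tilde N$, and then applies the infinitesimal hypothesis directly to obtain the geodesic through $X$ and $Y$ inside $\tilde N$.
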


\begin{proof}
    We need to show that any lift $\tilde N$ of $N$ to $\T$ is totally geodesic.  Let $N'$ be the dense, Zariski open subset of $N'$ in the definition of infinitesimally totally geodesic.  We denote by $\tilde N'$ the subset of $\tilde N$ that maps to the locus $N'$.  We first show that any $X,Y\in \tilde N'$ can be connected by a geodesic in $\tilde N'$.  

    We will use the map $E_X:T_X \T \to \T$, which by \Cref{lem:exp-homeo} is a homeomorphism. 

    From the assumption that $N$ is infinitesimally totally geodesic, 
    we get that restricting $E_X$ gives a map
    $$f:=E_X|_{T_X\tilde N} : T_X\tilde N \to \tilde N.$$
    Let $S:=f^{-1}(\tilde N')$.  Note that since $N'$ is open in $N$, we have that $\tilde N'$ is open in $\tilde N$.  
    Since $f$ is continuous, it follows that $S$ is an open subset of $T_X\tilde N$, and hence a manifold of the same dimension.  Now $f|_S: S\to \tilde N'$ is a continuous and injective (by uniqueness of geodesics) map between manifolds of the same dimension, so by invariance of domain it is open.

    In particular $f(S)$ is open in $\tilde N'$.  On the other hand, $f(S) = E_X(T_X\tilde N) \cap \tilde N'$ is closed in $\tilde N'$, since $E_X(T_X\tilde N)$ is closed in $\T$ (note that $T_X\tilde N$ is closed in $T_X\T$, and $E_X$ is homeomorphism). 

    Above we have shown that $f(S)$ is both open and closed in $\tilde N'$.  Since $\tilde N$ is irreducible, $\tilde N'$ is connected.  We conclude that $f(S)=\tilde N'$, and in particular $Y$ is in the image, so it can be connected to $X$ by a geodesic.  

    It remains to deal with points in $\tilde N-\tilde N'$.  We first suppose that $X$ is in $\tilde N'$, and $Y$ any point in $\tilde N$.  Since $N'$ is dense in $N$, we can find $Y_n\in \tilde N'$ with $Y_n \to Y$.  Since $E_X$ is a homeomorphism, we have $E_X^{-1}Y_n \to E_X^{-1}Y$.  By the above, for each $n$, we have $E_X^{-1}Y_n \in T_X\tilde N$.  Since $T_X\tilde N$ is closed in $T_X\T$, we get that $E_X^{-1}Y$ is also in $T_X\tilde N.$ Applying the infinitesimal totally geodesic property that we are assuming with $E_X^{-1}Y$ then implies that the geodesic connecting $X$ to $Y$ lies in $\tilde N$.  A similar argument takes care of the case when $X,Y$ are allowed to be any points in $\tilde N.$
\end{proof}

\begin{prop}
    \label{prop:global-tangent}
    Suppose $N$ is a totally geodesic subvariety of $\M$.
    Then $N$ is infinitesimally totally geodesic.  
\end{prop}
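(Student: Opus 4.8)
The plan is to take $N'$ to be the smooth locus of $N$, a dense Zariski open set (we take $N$ irreducible, as in the definition of infinitesimal total geodesy). Fix a lift $\tilde N\subseteq\mathcal T$ of $N$, let $\tilde N'$ be the part of $\tilde N$ lying over $N'$, and fix $X\in\tilde N'$, so that $\tilde N$ is a complex submanifold near $X$. For $v\in T_X\mathcal T$ write $G_v(t):=E_X(tv)=\bigl(\gamma_{v^1}(t),\dots,\gamma_{v^k}(t)\bigr)$ for the multi-geodesic through $X$ with velocity $v$; this is an honest geodesic in the sense of \Cref{defn:geod} precisely when every $v^i\ne 0$. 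It suffices to show that $G_v(\R)\subseteq\tilde N$ for every $v\in T_X\tilde N$; projecting to $\M$ then gives that $N$ is infinitesimally totally geodesic. So, setting
\[
  C:=\{\,v\in T_X\mathcal T:\ G_v(\R)\subseteq\tilde N\,\},
\]
the goal is to prove $C=T_X\tilde N$.

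I would establish this via three observations. (1) $C$ is a closed cone through the origin: it is closed, being $\bigcap_{t\in\R}\bigl(E_X(t\,\cdot)\bigr)^{-1}(\tilde N)$, an intersection of preimages of the closed set $\tilde N$ under continuous maps, and it is scale invariant because $G_{\lambda v}(t)=G_v(\lambda t)$. (2) $C\subseteq T_X\tilde N$: for $v\in C$, the curve $G_v$ is real-analytic, lies in $\tilde N$, and passes through the smooth point $X$ with velocity $G_v'(0)=v$, so $v\in T_X\tilde N$. (3) $C$ contains a neighborhood of $0$ in $T_X\tilde N$. By \Cref{lem:exp-homeo}, $E_X\colon T_X\mathcal T\to\mathcal T$ is a homeomorphism; since $X$ is a smooth point, choose open $U\ni X$ in $\mathcal T$ with $\tilde N\cap U$ a connected manifold. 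For $Y\in\tilde N\cap U$ distinct from $X$ (and, see the next paragraph, not agreeing with $X$ in any factor), total geodesy of $\tilde N$ yields a geodesic through $X$ and $Y$ contained in $\tilde N$, whose image — by uniqueness of geodesics in $\mathcal T$ — equals $G_{v_Y}(\R)$ with $v_Y:=E_X^{-1}(Y)$, so $v_Y\in C$. Hence $E_X^{-1}(\tilde N\cap U)\subseteq C$ (with $X\mapsto 0$), and $E_X^{-1}(\tilde N\cap U)$ is a topological manifold (a homeomorphic image of $\tilde N\cap U$) of the same dimension as $T_X\tilde N$ and, by (2), contained in it, so it is open in $T_X\tilde N$ by invariance of domain and contains $0$. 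Granting (1) and (3), a cone that contains a neighborhood of the origin of $T_X\tilde N$ equals $T_X\tilde N$ (any $w$ is $\lambda^{-1}(\lambda w)$ with $\lambda w$ in that neighborhood for small $\lambda$), so $C=T_X\tilde N$ as desired.

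The one delicate point is the restriction flagged in (3): if $Y$ agrees with $X$ in some factor $\mathcal T_{g_i,n_i}$, then $v_Y$ has a vanishing $i$-th component, $G_{v_Y}$ is not a geodesic in the sense of \Cref{defn:geod}, and total geodesy cannot be invoked directly. If $\tilde N$ is a single point the claim is trivial; otherwise, for any two distinct points of $\tilde N$ the connecting geodesic lies in $\tilde N$ and, by \Cref{defn:geod}, has all components non-constant, so every coordinate projection $\tilde N\to\mathcal T_{g_i,n_i}$ is non-constant. Consequently the locus $B\subseteq\tilde N$ where some coordinate equals that of $X$ is a finite union of fibers of non-constant holomorphic maps on the irreducible variety $\tilde N$, hence a proper analytic subset, in particular nowhere dense. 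The argument of (3) then applies verbatim to $Y\in(\tilde N\cap U)\setminus B$, and since $C$ is closed and $E_X^{-1}$ continuous, letting $Y_n\to Y$ with $Y_n\notin B$ gives $v_Y=\lim_n E_X^{-1}(Y_n)\in C$ for every $Y\in\tilde N\cap U$, completing (3). Apart from this, the proof is a direct assembly of \Cref{lem:exp-homeo}, total geodesy, and invariance of domain — essentially the converse of the proof of \Cref{prop:inf-geod-tot-geod}.
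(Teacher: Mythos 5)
Your proof is correct and follows essentially the same approach as the paper's: apply the exponential map $E_X$, use total geodesy to see the image of $T_X\tilde N$ lands back in $\tilde N$ and is scale-invariant, and invoke invariance of domain at a smooth point to get openness. The extra care you take about nearby $Y$ agreeing with $X$ in some factor is sound but in fact vacuous: since \Cref{defn:geod} requires every $c_i>0$, each coordinate of a geodesic is an injective map $\R\to\T_{g_i,n_i}$, so any two distinct points of a totally geodesic $\tilde N$ must already differ in every factor, i.e.\ $B=\{X\}$.
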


\begin{proof}

    Let $X\in N^{reg}$, the regular (non-singular) locus, which is a dense, Zariski open subset of $N$.  
    Consider any lift $\tilde N$ of $N$ to $\T$, and let $\tilde N^{reg}\subset \tilde N$ be the subset of elements that map to $N^{reg}$ under the projection $\T\to\M$.  Let $\tilde X\in \tilde N$ be  a point projecting to $X$. %
    Recall that $E_{\tilde X}: T_{\tilde X}\T \to \T$ is a homeomorphism. It suffices to prove that $E_{\tilde X}^{-1}(\tilde N)= T_{\tilde X}\tilde N.$
    
    Towards this end, note that $E_{\tilde X}^{-1}(\tilde N)$ consists of all vectors $v$ so that the endpoint of the time $1$ geodesic segment that starts tangent to $v$ lies in $N$ (recall that our definition of geodesic allows some rescaling of the \TM metric on the target, so this geodesic need not be unit speed).  Since $\tilde N$ is totally geodesic (since $N$ is), for any such segment, the whole geodesic along it lies in $\tilde N$.  This implies 
    \begin{enumerate}[(i)]
        \item $E_{\tilde X}^{-1}(\tilde N)$ is homogeneous with respect to $\R$ scaling, and 
        \item the vector $v$ is also a tangent vector to $\tilde N$.  Thus $E_{\tilde X}^{-1}(\tilde N)\subset T_{\tilde X}\tilde N.$  
    \end{enumerate}
 
    Now consider the subset $E_{\tilde X}^{-1}(\tilde N^{reg}) \subset E_{\tilde X}^{-1}(\tilde N)$, which by (ii) above is a subset of $T_{\tilde X}\tilde N$, so we get a  restriction map
    $$E_{\tilde X}^{-1}|_{\tilde N^{reg}}: \tilde N^{reg} \to T_{\tilde X} \tilde N.$$
    This is injective (since it's a restriction of the homeomorphism $E_{\tilde X}^{-1}$), and as an injective map between manifolds of the same dimension, invariance of domain implies it is open.  Thus $E_{\tilde X}^{-1}(\tilde N^{reg})$ is an open set in $T_{\tilde X} \tilde N$, and it also clearly contains $0$.  
    
    Combining this with (i) above, we have that $E_{\tilde X}^{-1}(\tilde N)$ is homogeneous with respect to $\R$ scaling, and contains an open set in $T_{\tilde X} \tilde N$ around $0$.  So $E_{\tilde X}^{-1}(\tilde N)=T_{\tilde X}\tilde N,$ as desired.  
\end{proof}

\section{Ratios of areas in invariant subvarieties}\label{sec:WYSIWYG}
In this section, we discuss strata of multi-component surfaces and prove \Cref{prop:ratio-areas-qd}, which generalizes a result of Chen-Wright to the setting of quadratic differentials.  This will be a key tool in our proof of \Cref{thm:bdy-tot-geod}.  

\subsection{Strata of multi-component differentials} 
\label{sec:strata}

\paragraph{Strata over $\M_{g,n}$.} We begin by introducing notation for strata of differentials over $\M_{g,n}$ that deals correctly with the marked points and can handle the case of identically zero differentials.  The reader may want to first consider the case of $\M_g$ rather than $\M_{g,n}$; in this case \eqref{item:pair} and conditions \eqref{item:one-pair},\eqref{item:marked-pt} below can be ignored.  The data specifying  such a stratum will be called a \textit{divisor multi-set} $\kappa$.  This $\kappa$ either has the value $\infty$, or it is a multi-set where each element has one of two types: 
\begin{enumerate}[(i)]
    \item a single non-zero integer $a$ (these will account for zeros/poles of the differential not at marked points),
    \item a pair $(a,m)$, where $a$ is an integer, and $m$ is a positive integer (these will account for marked points). \label{item:pair}
\end{enumerate}
Given such a $\kappa$, we now define the \textit{stratum} $\calQ(\kappa)\to \M_{g,n}$. For $(X,p_1,\ldots,p_n,q)$ with $(X,p_1,\ldots,p_n,q)\in \M_{g,n}$ and $q$ a meromorphic quadratic differential on $X$, it lies in $\calQ(\mu)$ iff all the following conditions hold:
\begin{enumerate}
    \item if $\kappa=\infty$, then $q$ is identically zero (the notation is meant to suggest that the differential vanishes to infinite order), 
    \item if $a\ne 0$ appears $\ell$ times in $\kappa$, then there are exactly $\ell$ points in $X-\{p_1,\ldots,p_n\}$ where $q$ has order $a$.
    \item if $\kappa\ne \infty$, then for each $m\in \{1,\ldots,n\}$, there is exactly one pair of the form $(a,m)$ in $\kappa$.  \label{item:one-pair}
    \item for each pair $(a,m)\in \kappa$, $q$ has order $a$ at the marked point $p_m$, \label{item:marked-pt}
\end{enumerate}

We note that $\calQ\M_{g,n}$, defined in \Cref{sec:bundle}, decomposes as a a finite union of strata $\calQ(\kappa)$.

\paragraph{Multi-component strata.} Given a multi-component moduli space $\M$, we define the notion of a \textit{multi-component stratum over $\M$}, denoted  $\calQ(\mu) \to \M$.  This is most straightforward when $\M$ is a product -- given a tuple of divisor multi-sets ${{\mu}} = \left(\kappa_1,...,\kappa_k\right)$ consistent with $\M$,\footnote{in the sense that if $\M=\M_{g_1,n_1} \times \cdots \M_{g_k,n_k}$, then $\calQ(\kappa_i)$ is a stratum over $\M_{g_i,n_i}$ for each $i$ } we take $\mathcal{Q}({\mu})  := \mathcal{Q}(\kappa_1) \times \cdots \times \mathcal{Q}(\kappa_k)$.
In general, we consider the product cover $\M'$, from which $\M$ is obtained  as the quotient by a finite group $G$ (recall \Cref{foot:quotient}).  This $G$ also naturally acts on any multi-component stratum $\calQ(\mu) \to \M'$, and we take the quotient to obtain $\calQ(\mu) \to \M$.  

When the list $\mu$ excludes any $\kappa_i = \infty$, the corresponding quotient will be called a \textit{stratum of multi-component translation surfaces} (these correspond to the $\mu$ giving flat-geometric objects).

\subsection{Holonomy double covers}

We will need to generalize some known results concerning Abelian differentials to quadratic differentials.  For this we use the holonomy double cover; we begin by recalling some well-known facts about it.   

For a quadratic differential $q$ on a Riemann surface $X$, there exists a canonical double cover $\hat{X}$ and an Abelian differential $\omega$ on $\hat{X}$ so that the pullback of $q$ to $\hat{X}$ is $\omega^2$. The set of regular points includes the even zeroes of $q$, but the cover has ramification at the odd zeroes and poles. An even zero of $q$ of order $n_i$ corresponds to two zeroes of $\omega$ of orders $n_i /2$, and an odd zero of $q$ of order $n_i$ corresponds to a zero of order $n_i + 1$. The deck group of $\hat{X}$ is generated by an involution $\tau$ -- a conformal automorphism of $\hat{X}$ such that $\tau^{*}(\omega) = -\omega$. %

\paragraph{Level structures.}
For a topological surface $S$ and finite set $\Sigma$ on $S$, let $\textrm{Mod}(S,\Sigma)$ be the mapping class group relative to $\Sigma$, i.e., a quotient of the group of orientation-preserving diffeomorphisms fixing $\Sigma$ pointwise. For a stratum $\mathcal{Q}(\kappa)$ there is a finite set $\Sigma$ corresponding to zeroes and simple poles of quadratic differentials so that $\mathcal{Q}(\kappa)$ is obtained as a quotient of its universal cover, by the group $\textrm{Mod}(S,\Sigma)$. The same construction makes sense for $\mathcal{H}(\kappa)$. For $\gamma \in \textrm{Mod}(S,\Sigma)$ and $H_1(S;\mathbb{Z}/n\mathbb{Z})$ the first homology of $S$ with $\mathbb{Z}/n\mathbb{Z}$ coefficients, there is an induced endomorphism $\gamma_{*}:H_1(S;\mathbb{Z}/n\mathbb{Z}) \mapsto H_1(S;\mathbb{Z}/n\mathbb{Z})$. By a result of Serre on the construction of level-$n$ structures, 

\begin{displaymath}
\textrm{Mod}(S,\Sigma)[n]:= \left\{\gamma \in \textrm{Mod}(S,\Sigma): \gamma_{*} = id \right\}
\end{displaymath}
is torsion-free and finite index in $\textrm{Mod}(S,\Sigma)$ for $n\geq 3$. Thus, replacing $\textrm{Mod}(S,\Sigma)$ by $\textrm{Mod}(S,\Sigma)[3]$, we obtain finite manifold covers of $\mathcal{Q}(\kappa)$ and $\mathcal{H}(\kappa)$, that still admit $GL^+(2,\mathbb{R})$-actions. On these finite covers, the differentials admit no non-trivial automorphisms. By taking holonomy double covers, we have the following proposition due to Kontsevich-Zorich, upon replacing the orbifolds $\mathcal{Q}(\kappa)$ and $\mathcal{H}(\kappa)$ by finite covers that are manifolds (e.g. level 3-structures). We will denote these covers by $\widehat{\calQ(\kappa)}$ 
and $\widehat{\mathcal{H}(\kappa)}$, respectively. For products of strata $\cal{Q}(\mu)$ and $\cal{H}(\mu')$, denote by $P$ the map obtained by taking the holonomy double cover map on each factor.

\begin{prop}{\cite[Lemma 1]{Kontsevich2002ConnectedCO}}
\label{KZholonomy}
The canonical map induced on manifolds obtained from taking holonomy double covers 
\begin{displaymath}
\tilde{P}: \widehat{\calQ(\kappa)} \mapsto \widehat{\mathcal{H}(\kappa')}
\end{displaymath}
is an algebraic map, and an injective %
immersion.  The data in $\kappa'$ is obtained from $\kappa = (n_1,...,n_r,m_1,...,m_s)$ by associating to $\kappa'$ two zeroes of order $n_i /2$ for each even $n_i$ that appears in $\kappa$, one zero of order $n_i +1$ to each odd $n_i$ that appears in $\kappa$, and nothing to simple poles.
\end{prop}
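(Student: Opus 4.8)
The plan is to give the standard construction of the family of holonomy double covers and then verify the two claimed properties (algebraicity and injective immersion) on the level of the manifold covers. First I would set up the universal family. Over $\widehat{\calQ(\kappa)}$ there is a tautological family of pairs $(X,q)$, with $q$ a quadratic differential with prescribed zero/pole orders from $\kappa$. For each such $(X,q)$, the holonomy double cover $\hat X \to X$ is the canonical connected double cover on which $q$ pulls back to the square of an Abelian differential $\omega$; concretely, $\hat X$ is the boundary of a tubular neighborhood of the zero section inside the total space of a square root line bundle of the canonical bundle twisted by the divisor of $q$, or equivalently it is built from the monodromy representation $\pi_1(X \setminus Z(q)) \to \mathbb{Z}/2$ recording the sign ambiguity of $\sqrt{q}$. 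Because we have passed to a level-$3$ cover, $q$ has no nontrivial automorphisms, the monodromy data varies in a locally constant (hence holomorphic) way, and one obtains an honest family $\hat{\mathcal X} \to \widehat{\calQ(\kappa)}$ together with a relative Abelian differential $\omega$; the zero orders of $\omega$ are computed from those of $q$ exactly as in the statement (an even zero of order $n_i$ splits into two zeros of order $n_i/2$ since it is an unramified point of the cover, an odd zero of order $n_i$ or a simple pole is a ramification point and the local computation in a coordinate where $q = z^{n_i}\,dz^2$, pulling back via $z = w^2$, gives $\omega = \pm w^{n_i} \cdot 2w\, dw = \pm 2 w^{n_i+1}\,dw$, so a zero of order $n_i+1$; for a simple pole $n_i = -1$ one gets a regular nonzero point). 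This produces the classifying map $\tilde P$ to $\widehat{\mathcal{H}(\kappa')}$.

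Next I would argue algebraicity. Both $\widehat{\calQ(\kappa)}$ and $\widehat{\mathcal{H}(\kappa')}$ are quasi-projective varieties (finite covers of strata, which carry algebraic structures compatible with their moduli interpretations), and the construction of $\hat X$ and $\omega$ from $(X,q)$ is functorial and given by algebraic operations — taking a root of a line bundle along an effective divisor, which is a finite étale-locally trivial construction, and the normalization of the resulting double cover — so $\tilde P$ is a morphism of varieties. Alternatively, and perhaps more cleanly, one observes that $\tilde P$ intertwines the two period-coordinate structures: a chart near $(X,q)$ on $\widehat{\calQ(\kappa)}$ is given by relative periods of $\omega$ on $\hat X$ that are anti-invariant under the deck involution $\tau$, and $\tilde P$ is simply the inclusion of this $\tau$-anti-invariant subspace into the full relative period space of $\widehat{\mathcal{H}(\kappa')}$; this exhibits $\tilde P$ locally as a linear inclusion, which simultaneously proves it is an immersion.

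Finally, injectivity: if two points $(X_1,q_1)$ and $(X_2,q_2)$ of $\widehat{\calQ(\kappa)}$ have the same image $(\hat X,\omega)$, then in particular $\hat X_1 \cong \hat X_2$ compatibly with $\omega$; the quadratic differential is recovered as $q_i = \omega^2$ pushed down along the quotient by the canonical involution $\tau_i$ (the one sending $\omega \mapsto -\omega$). Since after passing to level-$3$ covers the involution $\tau$ is the unique deck transformation and is intrinsic to $(\hat X,\omega)$ (it is determined by $\tau^*\omega = -\omega$ together with the fact that $\hat X \to \hat X/\tau$ has the right branching), the pair $(X_i,q_i) = (\hat X/\tau,\ (\omega^2)/\tau)$ is determined, giving $(X_1,q_1) = (X_2,q_2)$. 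The main obstacle — and the place where the level structure hypothesis is genuinely used — is ensuring that the double cover and its deck involution vary in an algebraic family without monodromy obstructions and that $\tau$ is canonically pinned down: on the orbifolds $\calQ(\kappa)$ themselves a quadratic differential may have automorphisms that interfere both with the existence of a global family and with the uniqueness of $\tau$, so one really must do everything on the manifold covers $\widehat{\calQ(\kappa)}$, $\widehat{\mathcal{H}(\kappa')}$, which is exactly what the proposition asserts; the content beyond bookkeeping is this reduction plus the identification of $\tilde P$ with the inclusion of the anti-invariant period subspace.
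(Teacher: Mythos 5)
The paper does not prove this proposition; it simply cites \cite[Lemma~1]{Kontsevich2002ConnectedCO} and asserts that the statement carries over to the level--$3$ manifold covers. So there is no in-paper argument to compare against, and the evaluation is of your sketch on its own terms.

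Your sketch captures the standard argument correctly. The local computation of zero orders (even zeros split into two zeros of half the order, odd zeros of order $n_i$ become a zero of order $n_i+1$ via $z = w^2$, simple poles become unmarked regular points) is exactly right, and the observation that $\tilde P$ is locally the linear inclusion of the $\tau$-anti-invariant relative periods into the full relative period space of $\widehat{\mathcal{H}(\kappa')}$ is the cleanest way to see simultaneously that the map is algebraic (period coordinates are algebraic coordinates on these strata) and an immersion. Your injectivity argument, recovering $(X,q) = (\hat X/\tau, \omega^2/\tau)$ and then noting that $\tau$ is pinned down because any two involutions inverting $\omega$ differ by an automorphism of $(\hat X,\omega)$ --- which is trivial on the level cover --- is also correct.

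There is one point your proof glosses over that is worth flagging: you assert that the map lifts to a map between the \emph{level-$3$ manifold covers} $\widehat{\calQ(\kappa)} \to \widehat{\mathcal{H}(\kappa')}$, but you do not explain why a level structure on $(X,\Sigma)$ determines one on the double cover $(\hat X, \hat\Sigma)$ in a way compatible with the deck groups of the two covers. Concretely, one needs the holonomy-cover construction to induce a homomorphism $\textrm{Mod}(S,\Sigma) \to \textrm{Mod}(\hat S, \hat\Sigma)$ carrying $\textrm{Mod}(S,\Sigma)[3]$ into $\textrm{Mod}(\hat S,\hat\Sigma)[3]$, so that the orbifold map $\calQ(\kappa)\to\mathcal{H}(\kappa')$ lifts. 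This is where the assertion that ``the monodromy data varies in a locally constant way'' would need to be upgraded to an actual argument about group-level compatibility. This gap does not affect the substance of the construction or the injectivity/immersion arguments, but a complete proof of the proposition as stated (on the hatted spaces) should address it.
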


\subsection{Ratio of areas in prime subvarieties}
Recall that $\M$ denotes a multi-component moduli space. We can define the notion of prime for subsets of $\cal Q \M $ -- it is completely analogous to the notion of prime for subsets of $\M$ given in \Cref{sec:prime}.  We wish to show that prime $GL^+(2,\mathbb{R})$-invariant subvarieties of (unit-area loci) of strata of multi-component translation surfaces have the following property: the ratios of areas of any two factors remains constant in the subvariety.

When we refer to a constant-area locus, of area $c$, of a subvariety of multi-component surfaces, we mean that the sum of the areas of the components is $c$. In the context of Abelian differentials, the following is due to Chen-Wright.

\begin{prop}{\cite[Corollary 7.4]{cw21}}
\label{prop:ratio-areas-ab}
The ratios of areas of factors of the unit-area locus of any irreducible component of a prime $GL^+(2,\mathbb{R})$-invariant subvariety $Q$ of a stratum of multi-component translation surfaces are constant.
\end{prop}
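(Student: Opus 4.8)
Since \Cref{prop:ratio-areas-ab} is \cite[Corollary 7.4]{cw21}, one could simply invoke it; I sketch the argument anyway, as it is the model for the quadratic-differential generalization that this section is heading toward. The plan is to reduce the claim to a statement about the area functions. Work on the unit-area locus $Q_1 \subset Q$, and for each $i$ let $a_i\colon Q_1 \to \RR_{>0}$ record the area of the $i$-th component; since on a stratum of multi-component translation surfaces every component carries a nonzero differential, all these areas are positive, and it is enough to show that each ratio $a_i/a_j$ is constant on $Q_1$ (equivalently on $Q$, which is scaling-invariant). The first thing to record is that each $a_i$ is invariant under the diagonal $SL(2,\RR)$-action, since $SL(2,\RR)$ acts on each flat component separately and preserves its area; hence $a_i/a_j$ is an $SL(2,\RR)$-invariant, real-analytic function on $Q_1$. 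I would then pass to period coordinates on the ambient stratum, which split as $\bigoplus_i H^1(X_i,\Sigma_i;\CC)$: by the structure theory of invariant subvarieties (Eskin--Mirzakhani--Mohammadi, Filip, and its extension to the multi-component setting) $Q$ is locally cut out there by real-linear equations, the tangent space $T_\omega Q$ carries a real structure, and in these coordinates $a_i$ is the intersection pairing of the real and imaginary parts of the period of $\omega$ on the $i$-th component, so that $da_i|_{T_\omega Q}$ is ``cup product with $\operatorname{Im}\omega_i$''.

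The main step is to exclude non-constant ratios using primeness, and I would attempt this via Wright's cylinder-deformation machinery -- the ``WYSIWYG'' circle of ideas of the present section. At a point of $Q$ carrying a cylinder decomposition in some direction, the cylinders fall into equivalence classes, and stretching or shearing all cylinders in one class simultaneously is a deformation tangent to $Q$; a stretch of a cylinder on component $i$ raises $a_i$ and leaves the other $a_j$ fixed, \emph{unless} that cylinder is equivalent to cylinders on other components, in which case those areas move together in a ratio fixed by the cylinders' moduli. One then argues that if some $a_i/a_j$ were non-constant on $Q$, there would be a deformation tangent to $Q$ that redistributes total area among the components but is not forced by any cross-component cylinder equivalence; pushing this back through the real structure on $T_\omega Q$ shows that $\{1,\dots,k\}$ splits into two nonempty blocks $A\sqcup B$ along which the period coordinates, hence $Q$ itself, decompose as a product. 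This contradicts primeness, so all ratios are constant. (Equivalently, one can phrase the step as: primeness forces the affine measure on $Q_1$ to be $SL(2,\RR)$-ergodic, whence the continuous $SL(2,\RR)$-invariant function $a_i/a_j$ is almost everywhere -- and so everywhere -- constant.)

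The hard part is precisely the implication ``some area ratio varies on $Q$ $\Rightarrow$ $Q$ is a product of invariant subvarieties of fewer components''. Making this rigorous means tracking an area-changing tangent vector through the symplectic form on absolute cohomology, the relative-period directions, and the equivalence classes of cylinders, and showing that it genuinely block-diagonalizes $T_\omega Q$ compatibly with a partition of the components; the two bracketing steps -- the reduction to the area functions and the passage from local to global -- are routine by comparison. Finally, I note that once \Cref{prop:ratio-areas-ab} is in hand, the quadratic-differential version of this section will follow by pushing a prime invariant subvariety of multi-component quadratic differentials to one of Abelian differentials via the componentwise holonomy double cover of \Cref{KZholonomy}, applying the Abelian case there, and transporting the area identities back, using that the double cover scales area by a fixed factor on each component.
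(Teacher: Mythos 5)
The paper gives no proof of this proposition at all---it is imported verbatim from Chen--Wright \cite[Corollary 7.4]{cw21}---so your opening move of simply invoking the citation is exactly the paper's approach, and your closing remark about transporting the statement to quadratic differentials via the componentwise holonomy double cover of \Cref{KZholonomy} is precisely what the paper then does in \Cref{prop:ratio-areas-qd}. Your intermediate sketch of the Chen--Wright argument goes beyond what the paper attempts and, as you yourself acknowledge, leaves the crucial implication (a non-constant area ratio forces a product decomposition contradicting primeness) unproven, so it should be read as a pointer to the cited proof rather than a self-contained one; relying on the citation is the correct and sufficient course here.
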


We now give the easy generalization of the above to the setting of quadratic differentials, using \Cref{KZholonomy}.

\begin{prop}
\label{prop:ratio-areas-qd}
The ratios of areas of factors of the constant-area $c$ locus of any irreducible component of a prime $GL^+(2,\mathbb{R})$-invariant subvariety $Q$ of a stratum of multi-component translation surfaces are constant.
\end{prop}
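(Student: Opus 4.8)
The plan is to reduce \Cref{prop:ratio-areas-qd} to the Abelian-differential case, \Cref{prop:ratio-areas-ab}, by passing to holonomy double covers component-by-component; the point is that this operation multiplies the area of each component by exactly $2$, hence preserves all ratios of areas.

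First I would record the area identity. If $q^i$ is a quadratic differential on a component $X_i$ with holonomy double cover $\pi_i\colon\widehat X_i\to X_i$ and $\pi_i^*q^i=(\omega^i)^2$, then $\pi_i^*|q^i|=|\omega^i|^2$ as measures, and since $\pi_i$ has degree $2$ and its ramification locus is finite (hence null), $\operatorname{area}(\omega^i)=\int_{\widehat X_i}|\omega^i|^2=2\int_{X_i}|q^i|=2\operatorname{area}(q^i)$. Thus the component-wise holonomy double cover scales the vector of component-areas by $2$, so it identifies $\operatorname{area}(q^i)/\operatorname{area}(q^j)$ with the analogous ratio for the associated Abelian differential, and carries the constant-area-$c$ locus into the constant-area-$2c$ locus.

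Next, I would pass to level-$3$ covers of all strata involved, choosing them so that the cover of a product of strata is the product of the covers of the factors. These are manifolds carrying $GL^+(2,\RR)$-actions, and by \Cref{KZholonomy} the resulting component-wise holonomy double cover map $P$ is an algebraic injective immersion; moreover $P$ is $GL^+(2,\RR)$-equivariant (the holonomy double cover is canonical, and the $\pm I$-part of the flat holonomy is unaffected by an orientation-preserving affine deformation) and $P=\prod_i\widetilde P_i$ respects the product structure. Now pull $Q$ back to the level-$3$ cover and let $Q_0$ be an irreducible component of the preimage. Then $Q_0$ is $GL^+(2,\RR)$-invariant (as $GL^+(2,\RR)$ is connected) and still prime: it maps onto $Q$ under the (product) covering map, so a product decomposition of $Q_0$ would push forward, factor-group by factor-group, to one of $Q$. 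I claim $P(Q_0)$ is also prime. Suppose $P(Q_0)=A'\times A''$ along some nontrivial partition of the factors; let $B',B''$ be the closures of the projections of $Q_0$ to the two corresponding groups of factors, so $Q_0\subseteq B'\times B''$. Since each $\widetilde P_i$ is an injective immersion, $\dim B'=\dim A'$ and $\dim B''=\dim A''$, while $\dim Q_0=\dim P(Q_0)=\dim A'+\dim A''$; as $Q_0$ and $B'\times B''$ are both irreducible of the same dimension, $Q_0=B'\times B''$, contradicting primeness of $Q_0$. (In fact the image of $P$ is the closed ``anti-invariant'' locus of Abelian differentials, so no closure is needed for $P(Q_0)$ itself.)

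Finally, $P(Q_0)$ is a prime $GL^+(2,\RR)$-invariant subvariety of a stratum of multi-component translation surfaces, so \Cref{prop:ratio-areas-ab} gives that the ratios of areas of its factors are constant on its constant-area locus (the $\RR_{>0}$-scaling inside $GL^+(2,\RR)$ relates the different constant-area loci, so this does not depend on the chosen area value); by the area identity this says that the ratios of areas of the factors of $Q_0$ are constant on its constant-area-$c$ locus, and since $Q_0\to Q$ is surjective with the area functions pulled back from $Q$, the same holds on (each irreducible component of) the constant-area-$c$ locus of $Q$. The main obstacle I anticipate is purely organizational: carefully tracking primeness, irreducibility, and $GL^+(2,\RR)$-invariance through the level-$3$ covers, and confirming the $GL^+(2,\RR)$-equivariance of the holonomy double cover construction — the area doubling itself and the invocation of \Cref{KZholonomy} are routine.
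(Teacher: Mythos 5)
Your proof is correct and follows essentially the same route as the paper: reduce to the Abelian case of \Cref{prop:ratio-areas-ab} by passing component-wise to holonomy double covers (via level-$3$ structures and \Cref{KZholonomy}), noting that each component's area is doubled so area ratios are preserved, and checking that primeness and $GL^+(2,\RR)$-invariance survive the passage. The only cosmetic difference is in the primeness check for the image: the paper pulls a putative product decomposition of $P(Q)$ straight back to $Q$ using injectivity of $P$, whereas you reach the same contradiction by a dimension count on the closures of the factor projections; both hinge on $P$ being an injective immersion.
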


\begin{proof}
We assume we have lifted the constant-area $c$ locus of an irreducible component $Q$ so that $Q \subset \mathcal{Q}(\kappa_1) \times \cdots \times \mathcal{Q}(\kappa_p)$. We claim $P(Q)$ is the constant-area $2c$ locus of an irreducible component of a prime %
invariant subvariety of $\mathcal{H}(\kappa_1') \times \cdots \times \mathcal{H}(\kappa_p')$. Assume, for the sake of contradiction that $P(Q)$ is not prime. Applying Proposition $\ref{KZholonomy}$, we have by Chevalley's theorem, and the fact that the finite covering maps $\widehat{\calQ(\kappa_i)} \mapsto \calQ(\kappa_i)$  are closed, that $P(Q)$ is a $GL^+(2,\mathbb{R})$-invariant subvariety of $\mathcal{H}(\kappa_1') \times \cdots \times \mathcal{H}(\kappa_p')$, where $P = P_1 \times \cdots \times P_p$ and each $P_i$ is a holonomy double cover map on a factor. Note that $P(Q)$ is a non-trivial product of prime $GL^+(2,\mathbb{R})$-invariant subvarieties, each of which is contained in a product of a subset of the factors of $\mathcal{H}(\kappa_1') \times \cdots \times \mathcal{H}(\kappa_p')$.  Since $P$ is injective, we have $Q =  P_{\overline{1}}^{-1}(M_1) \times \cdots \times P_{\overline{r}}^{-1}(M_r)$, where each $P_{\overline{i}}$ is the product over the subset of $P_i$ that makes the pre-image of $M_i$ well-defined. We observe that every $P_{\overline{i}}^{-1}(M_i)$ must be prime, since otherwise by it can be written as a non-trivial product, and $M_i$ itself cannot be prime. This contradicts the primality of $Q$.

By the fact that strata of multi-component surfaces were defined to have finite total area and by \Cref{prop:ratio-areas-ab}, the rescalings of elements of $\mathcal{H}(\kappa_1') \times \cdots \times \mathcal{H}(\kappa_p')$ by $c/2$ are such that the ratios of areas of the factors are constant over $P(Q)$. Rescaling again by $2$, we see the ratios of areas of factors of $Q$ are also constant.
\end{proof}

\section{Compactifications of spaces of differentials}
\label{sec:compact}

We recall several different spaces giving compactifications of spaces of differentials.  These are used in several different places for various purposes later in the proof.  See \cite{dozier24} for an overview of several of these spaces and relationships between them, including flat geometric examples.  

\subsection{Quadratic Hodge bundle over $\overline{\M}_{g,n}$}
\label{sec:hodge}

Denote by $\overline{\calM}_{g,n}$ the Deligne-Mumford compactification of the moduli space of genus $g$ Riemann surfaces with $n$ (labeled) marked points $\calM_{g,n}$. %

We will describe the quadratic Hodge bundle $\calQ\overline{\calM}_{g,n}$, extending the bundle $\calQ\M_{g,n} \to \M_{g,n}$ discussed in \Cref{par:multi-comp}.  It will be a rank $3g-3+n$ (orbifold) vector bundle  over $\overline{\calM}_{g,n}$, and its elements are known as stable quadratic differentials. The projectivization of the closure of a stratum of quadratic differentials in $\mathcal{Q}\overline{\mathcal{M}}_{g,n}$ is known as the $\textit{Hodge bundle compactification}$. We provide a brief overview of these notions, and refer the reader to \cite{harris1998moduli} for a thorough treatment of stable curves, the Deligne-Mumford compactification, and the construction of the Hodge bundle as a vector bundle over the universal curve.

There exists a family $p: \mathcal{C}_{g,n} \rightarrow \calM_{g,n}$ of Riemann surfaces,  called the universal curve over $\calM_{g,n}$, which is a map of complex orbifolds. Further, there is a sheaf $\omega_{C}$ on $\overline{\mathcal{C}}_{g,n}$, 
the universal curve over the Deligne-Mumford compactification  $\overline{\calM}_{g,n}$, so that the pushforward $p_{*}\omega_{C}$ to $\overline{\calM}_{g,n}$, has the following description: $p_{*}\omega_{C}$ is a rank $g$ vector bundle, whose fibers over a stable curve $(X,p_1,...,p_n)$ is the space of global sections $H^0(X,\omega_{\pi})$ where $\omega_{\pi}$ is the relative dualizing sheaf of $X$.  The vector bundle $p_{*}\omega_{C}$ is commonly referred to as the Hodge bundle and denoted $\Omega\overline{\calM}_{g,n}$. We are interested in the \emph{quadratic Hodge bundle} $p_{*}\left(\omega^{\otimes 2}_{C}\right)$ which we denote by $\calQ\overline{\calM}_{g,n}$. This is of rank $3g-3+n$; a proof can be found in Section 4 \cite{hk14}.

An element in $\calQ\overline{\calM}_{g,n}$ is an at-worst nodal curve $X$ with a special choice of marked points $(p_1,...,p_n)$, and a quadratic differential on each irreducible component of $X$. These quadratic differentials can have poles of at most order $2$ at the nodes, %
and may have simple poles at the marked points. Poles of order 2 have a local invariant, the ``2-residue", and these residues must match at a node with a double pole, where two points on different irreducible components are identified. Away from nodes and marked points, they are holomorphic. For more on this, and the generalization of this construction to $\omega^{\otimes k}$ for $k >2$, see \cite{bainbridge2019strata}.

\subsection{Multi-scale compactification}
\label{sec:multi-scale}

In \cite{cmz22}, the authors construct a compactification of projectivized strata of $k$-differentials, the \textit{moduli space of projectivized multi-scale $k$-differentials}. This compactification was introduced in $\cite{bcggm19}$ in the Abelian case. We will work with the case $k=2$ -- compactifications of strata of quadratic differentials $\calQ(\kappa)$. We use the notation $\Xi \calQ(\kappa)$ for the (unprojectivized) space of multi-scale quadratic differentials and $\mathbb{P} \Xi \calQ(\kappa)$ for the projectivized space. Similarly, we denote the respective Abelian strata by $\Xi \cal{H}(\kappa)$ and $\mathbb{P} \Xi \cal{H}(\kappa)$.

The boundary  $\partial \mathbb{P} \Xi \calQ(\kappa)=\mathbb{P} \Xi \calQ(\kappa) - \calQ(\kappa)$ parametrizes \textit{multi-scale quadratic differentials}. These are stable curves, together with a collection of meromorphic quadratic differentials on the irreducible components, constrained by certain conditions. The boundary decomposes into a union of open boundary strata, indexed by an enhancement $\overline{\Gamma}$ of the dual graph $\Gamma$ of a stable curve. The content of the enhancement is a stratification of the irreducible components of a stable curve by \textit{levels} (corresponding to the speed of vanishing of differentials on different components), together with prescribed vanishing orders at the nodes.

In \cite[Section 7, Lemma 7.1]{cms23}, the closure of a stratum of $k$-differentials in the projectivized stratum of multi-scale $k$-differentials is described via cyclic coverings of Abelian differentials.  In particular, as the boundary strata of a projectivized multi-scale compactification of a stratum of Abelian differentials are indexed by enhanced graphs, the boundary strata of multi-scale $k$-differentials may be described by cyclic $k$-covers of the corresponding enhanced graphs. We will use this description to sketch how to adapt the following theorem to our setting of interest.

\begin{thm}{\cite[Theorem 1.2]{benirschke23}}
\label{thm:ab-level-ind}
Let $M \subset \mathcal{H}(\kappa)$ be a $\mathbb{C}$-linear subvariety. Then, the intersection of the closure $\overline{M} \subseteq \Xi\cal{H}(\kappa)$ with any boundary stratum $D_{\overline{\Gamma}}$ of the moduli space $\Xi\cal{H}(\kappa)$ of multi-scale differentials is a level-wise linear subvariety, for the natural linear structure on the boundary stratum $D_{\overline{\Gamma}} \subset \partial \Xi\cal{H}(\kappa)$. 
\end{thm}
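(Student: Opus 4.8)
The plan is to work in the plumbing charts for $\Xi\mathcal{H}(\kappa)$ near the boundary stratum $D_{\overline{\Gamma}}$ constructed in \cite{bcggm19, cmz22}, and to track how the $\mathbb{C}$-linear equations cutting out $M$ in period coordinates degenerate as one approaches the boundary. Write the levels of $\overline{\Gamma}$ as $0 = \ell_0 > \ell_1 > \cdots > \ell_L$; for each $j$ let $\mathcal{B}_{[\ell_j]}$ be the stratum of (possibly meromorphic) Abelian differentials on the level-$\ell_j$ subsurface prescribed by the enhancement, equipped with its period coordinates. Up to the level-rotation torus, $D_{\overline{\Gamma}}$ is the subspace of $\prod_{j} \mathbb{P}\mathcal{B}_{[\ell_j]}$ cut out by the global residue condition, and its natural linear structure is the one whose charts are products of linear subspaces in the per-level period coordinates (sliced by the — already linear — global residue condition). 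Thus a \emph{level-wise linear} subvariety $Z \subset D_{\overline{\Gamma}}$ is one that, in any such chart, equals $D_{\overline{\Gamma}} \cap \prod_j Z_{[\ell_j]}$ with each $Z_{[\ell_j]} \subseteq \mathbb{P}\mathcal{B}_{[\ell_j]}$ a $\mathbb{C}$-linear subvariety, and the goal is to exhibit such $Z_{[\ell_j]}$ for $Z = \overline{M}\cap D_{\overline{\Gamma}}$.

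Fix $X_0 \in \overline{M}\cap D_{\overline{\Gamma}}$. By the plumbing construction, a neighborhood of $X_0$ in $\Xi\mathcal{H}(\kappa)$ carries coordinates $\big((z^{(j)})_{0\le j\le L},\, t_1,\dots,t_L\big)$, where $z^{(j)}$ is a period coordinate on $\mathcal{B}_{[\ell_j]}$, the $t_m\in\mathbb{C}$ are the modular (level-transition) parameters, and $D_{\overline{\Gamma}}=\{t_1=\cdots=t_L=0\}$. For a smooth surface near $X_0$ (all $t_m\ne 0$) the genuine period of an absolute cycle $\gamma$ has an expansion
\[
\textstyle\int_{\gamma}\omega \;=\; \sum_{j=0}^{L} s_{[j]}(t)\,\Pi^{(j)}_\gamma(z,t), \qquad s_{[j]}(t) = t_1 t_2\cdots t_j,
\]
where $s_{[j]}$ is the scaling that keeps $\omega$ finite and nonzero on the level-$\ell_j$ part, and $\Pi^{(j)}_\gamma$ extends holomorphically to $t=0$ with $\Pi^{(j)}_\gamma(z,0)$ equal to the $z^{(j)}$-period of the portion of $\gamma$ lying on level $\ell_j$; for a relative cycle crossing a node being plumbed one gets in addition a term $\tfrac{1}{2\pi i}(\log t_m)\,r$, whose coefficient $r$ is the corresponding node-residue — itself a coordinate on one of the level strata. (I would lift the precise form of these expansions from \cite{bcggm19, cmz22}.) The key point is that the $s_{[j]}(t)$ vanish to strictly increasing orders as $t\to 0$, so the terms are asymptotically separated by level.

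The core step is the degeneration of the defining equations. A $\mathbb{C}$-linear equation of $M$ reads $\sum_\gamma a_\gamma\int_\gamma\omega = 0$ on smooth surfaces in $M$; substituting the expansion and grouping by level it becomes $\sum_j s_{[j]}(t)\big(\sum_\gamma a_\gamma\Pi^{(j)}_\gamma(z,t)\big)=0$ identically on $M$, together with the vanishing of each $\log t_m$-coefficient (a linear equation on node-residues, hence on a single level's coordinates). Because the $s_{[j]}$ have distinct vanishing orders, I would peel the equations off one level at a time — repeatedly dividing by the slowest-vanishing $s_{[j]}$ whose bracket is not identically zero and passing to $t\to 0$ along $M$ — to conclude that $M$ also satisfies every per-level truncation, each of which in the limit is a $\mathbb{C}$-linear equation in the single coordinate set $z^{(j)}$, namely $\sum_\gamma a_\gamma\,(\text{period on level }\ell_j\text{ of }\gamma)=0$ on $\mathcal{B}_{[\ell_j]}$. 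Running this for every defining equation of $M$ (and for the $\log$-coefficient equations) produces, for each $j$, a system of $\mathbb{C}$-linear equations defining $Z_{[\ell_j]}\subseteq\mathbb{P}\mathcal{B}_{[\ell_j]}$, and shows $\overline{M}\cap D_{\overline{\Gamma}} \subseteq D_{\overline{\Gamma}} \cap \prod_j Z_{[\ell_j]}$.

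For the reverse inclusion I would establish equality by a dimension count, organized by induction on the number of levels $L$, so that a general $\overline{\Gamma}$ is reduced to iterated two-level degenerations where the asymptotics involve a single parameter. Concretely: given a point of $D_{\overline{\Gamma}}\cap\prod_j Z_{[\ell_j]}$, one runs the plumbing construction with the modular parameters constrained so that the resulting genuine periods satisfy the linear equations of $M$; since those equations are linear and, by the splitting above, their leading behavior is controlled level-by-level, the constraint on the plumbing data is again linear and solvable, exhibiting the point as a limit of surfaces in $M$. I expect the main obstacle to be exactly this last step together with making the multi-level asymptotics uniform — controlling the error terms, handling the $\log$-contributions and the interaction with the level-rotation torus, and, most importantly, verifying that the per-level truncations are \emph{exhaustive}, so that one obtains the equality $\overline{M}\cap D_{\overline{\Gamma}} = D_{\overline{\Gamma}}\cap\prod_j Z_{[\ell_j]}$ rather than a mere inclusion.
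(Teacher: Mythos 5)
The statement you are proving is not proved in this paper at all: it is imported verbatim as \cite[Theorem 1.2]{benirschke23}, and the only argument given here is the sketch reducing the quadratic case (\Cref{thm:quad-level-ind}) to it via holonomy double covers. So the relevant comparison is with Benirschke's original proof, whose broad strategy (perturbed period coordinates near a boundary point of $\Xi\mathcal{H}(\kappa)$, asymptotic expansion of periods with the level-scaling parameters, and a level-by-level analysis of the defining linear equations) your outline does share. But as a proof your proposal has genuine gaps at exactly the places where the real work lies. First, the ``peeling'' step is not justified as stated: along a sequence in $M$ approaching $D_{\overline{\Gamma}}$ you cannot vary the parameters $t_1,\dots,t_L$ independently, so you are not entitled to equate coefficients of $\log t_m$ separately, nor to conclude that each per-level bracket vanishes; the relative rates at which the $t_m\to 0$ are constrained by $M$ itself, the log-terms attached to relative periods crossing plumbed nodes are unbounded and interact with the residues (which live at lower levels with their own scalings), and an equation whose leading bracket vanishes identically on the limit only yields information after dividing by a deeper scale, which requires uniform control of all error terms (including the modification differentials) that you have not supplied. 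Handling precisely these interactions, together with the level-rotation torus, prong-matchings and the global residue condition, is a substantial part of \cite{benirschke23}.

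Second, and more seriously, the equality is the heart of the theorem and your proposal essentially concedes it. Proving $\overline{M}\cap D_{\overline{\Gamma}}\subseteq D_{\overline{\Gamma}}\cap\prod_j Z_{[\ell_j]}$ only gives containment in some level-wise linear locus; the theorem asserts that the intersection itself is a level-wise linear subvariety, and a dimension count cannot close this gap, since a priori $\overline{M}\cap D_{\overline{\Gamma}}$ could be a proper, lower-dimensional subvariety of the candidate locus (the boundary of $M$ in a divisor has codimension one in $M$, while the product of the naive per-level limits can be strictly larger). What is needed is a smoothing/deformation argument: given a multi-scale differential satisfying the candidate per-level equations (and the GRC), one must construct a family in $M$ degenerating to it, i.e.\ show the plumbed surfaces can be kept inside $M$; your sentence that ``the constraint on the plumbing data is again linear and solvable'' is an assertion, not an argument, and it is precisely the technical core of Benirschke's proof. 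Finally, note that identifying ``level-wise linear'' with a global product $\prod_j Z_{[\ell_j]}$ is itself only correct locally and up to the level-rotation action; the statement should be formulated chart by chart. In short: the skeleton matches the known strategy, but the two decisive steps (rigorous level separation in the presence of log-terms along $M$, and the reverse inclusion via controlled plumbing within $M$) are missing, so the proposal does not yet constitute a proof.
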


\begin{thm}
\label{thm:quad-level-ind}
Let $Q \subset \calQ(\kappa)$ be a $\mathbb{C}$-linear subvariety. Then, the intersection of the closure $\overline{Q} \subseteq \Xi{\calQ}(\kappa)$ with any boundary stratum $D_{\overline{\Gamma}}$ of the moduli space $\Xi\cal{Q}(\kappa)$ of multi-scale quadratic differentials is a level-wise linear subvariety, for the natural linear structure on the boundary stratum $D_{\overline{\Gamma}} \subset \partial \Xi{\calQ}(\kappa)$.
\end{thm}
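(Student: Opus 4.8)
The plan is to reduce \Cref{thm:quad-level-ind} to its Abelian counterpart, \Cref{thm:ab-level-ind}, by means of the holonomy double cover, using the description of the multi-scale compactification of a stratum of $k$-differentials via cyclic covers of Abelian differentials from \cite[Section 7, Lemma 7.1]{cms23}. First I would pass, once and for all, to level-$3$ structures on all strata involved, so that $\calQ(\kappa)$, $\mathcal{H}(\kappa')$, and their multi-scale compactifications become manifolds on which the differentials carry no non-trivial automorphisms (as in the setup preceding \Cref{KZholonomy}). Since $\mathbb{C}$-linear subvarieties and level-wise linear subvarieties pull back to subvarieties of the same type under the finite covering maps, the statement to be proved is unaffected by this reduction.

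Next I would record that the holonomy double cover map $\tilde{P}\colon \widehat{\calQ(\kappa)} \to \widehat{\mathcal{H}(\kappa')}$ of \Cref{KZholonomy} is linear in period coordinates: its image is the $(-1)$-eigenspace of the involution $\tau^{*}$ acting on the relative cohomology of the double cover, which is a linear subspace. Hence, if $Q \subset \calQ(\kappa)$ is $\mathbb{C}$-linear, then $M := \tilde{P}(\widehat{Q}) \subset \mathcal{H}(\kappa')$ is also $\mathbb{C}$-linear, being cut out inside this eigenspace by the linear equations defining $Q$ together with the linear equations defining the eigenspace. \Cref{thm:ab-level-ind} then applies to $M$, so that for every boundary stratum $D_{\overline{\Gamma}'}$ of $\Xi\mathcal{H}(\kappa')$ the intersection $\overline{M} \cap D_{\overline{\Gamma}'}$ is level-wise linear.

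It then remains to transport this conclusion back down to the quadratic setting. By \cite[Lemma 7.1]{cms23}, the closure $\overline{\calQ(\kappa)} \subseteq \Xi\calQ(\kappa)$ is identified, level graph by level graph, with the locus of cyclic (here, double) covers sitting inside $\Xi\mathcal{H}(\kappa')$; this yields an extension of $\tilde{P}$ over the boundary, matching each enhanced level graph $\overline{\Gamma}$ of $\calQ(\kappa)$ with an enhanced level graph $\overline{\Gamma}'$ of $\mathcal{H}(\kappa')$ and carrying the level-wise linear structure on $D_{\overline{\Gamma}}$ into that on $D_{\overline{\Gamma}'}$ (on each level the construction is again a holonomy double cover, hence linear in periods, so the covering locus is itself level-wise linear). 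Since $\tilde{P}$ is an injective immersion and the covering maps to the orbifold strata are proper, $\tilde{P}$ restricts to a closed embedding on the relevant closed subsets; therefore $\overline{Q} \cap D_{\overline{\Gamma}}$ is identified, through a level-wise linear isomorphism, with the intersection of $\overline{M} \cap D_{\overline{\Gamma}'}$ with the (level-wise linear) covering locus. An intersection of level-wise linear sets is level-wise linear, which gives the theorem.

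The step I expect to be the main obstacle is the last one: checking that the holonomy double cover is compatible with the multi-scale boundary stratification in the precise sense required, namely that \cite[Lemma 7.1]{cms23} produces, for $k=2$, both a bijection between the enhanced level graphs of $\calQ(\kappa)$ and those of the relevant covering sublocus of $\mathcal{H}(\kappa')$ --- respecting levels and the prescribed vanishing orders at the nodes, including the correspondence between double poles at nodes with matching $2$-residues downstairs and simple poles with matching residues upstairs, and the ramification at odd zeros and simple poles --- and an identification of the level-wise linear structures. The bookkeeping of odd versus even orders in $\kappa$ under the cover, already somewhat delicate in \Cref{KZholonomy}, is where most of the work will lie; the remaining steps are essentially formal.
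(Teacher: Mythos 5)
Your proposal is correct and follows essentially the same route as the paper: pass to the holonomy double cover $\tilde P$ (which is linear in periods), apply \Cref{thm:ab-level-ind} to $M=\tilde P(\widehat Q)$, and transport the level-wise linearity back to $\Xi\calQ(\kappa)$ via the compatibility of the double cover with the multi-scale boundary established in \cite[Section~7]{cms23}. The only cosmetic difference is in the transport step: the paper invokes the explicit pushdown map $d_\pi$ of \cite[Lemma~7.2]{cms23}, while you phrase the same identification as the boundary extension of the embedding $\tilde P$ onto the covering locus — these are the same mechanism, and your flag of the enhanced-level-graph bookkeeping as the main technical work matches where the paper offloads the difficulty onto the cited lemma.
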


\begin{proof}(sketch)
Let $Q \subset \cal{Q}(\kappa)$ be a $\mathbb{C}$-linear subvariety, and consider $\tilde{P}(\widehat{Q})$, the image under $\tilde{P}$ of Proposition \ref{KZholonomy} of the lift of $Q$ to a finite manifold cover $\widehat{\cal{Q}(\kappa)}$. By Proposition $\ref{KZholonomy}$, the projection of $\tilde{P}(\widehat{Q})$ to $\cal{H}(\kappa')$ is a $\mathbb{C}$-linear subvariety of $\cal{H}(\kappa')$. Denote it by $M$. By Theorem $\ref{thm:ab-level-ind}$, $\overline{M} \cap D_{\overline{\Gamma}'}$ is a level-wise linear subvariety of the boundary stratum $D_{\overline{\Gamma}'}$. Apply the map $d_{\pi}$ in \cite[Lemma 7.2]{cms23} to see that $\overline{M} \cap D_{\overline{\Gamma}'}$ is mapped by $d_{\pi}$ to a level-wise linear subvariety of a boundary stratum $D_{\overline{\Gamma}}$ in $\partial \Xi{\calQ}(\kappa)$. Furthermore, $d_{\pi}\left(\overline{M} \cap D_{\overline{\Gamma}'}\right) = \overline{Q} \cap D_{\overline{\Gamma}}$.
\end{proof}

\subsection{Real multi-scale space}
\label{sec:real-multi}
Finally, in $\cite{bcggm19}$, a real, oriented blowup of $\Xi \cal{H}(\kappa)$ is constructed, which we denote by $\widehat \Xi \cal{H}(\kappa)$. This has the advantage that the $GL^+(2,\mathbb{R})$-action extends continuously from $\mathcal{H}(\kappa)$ to $\widehat \Xi \cal{H}(\kappa)$ \cite[Theorem 15.1]{bcggm19}. In a similar way, a real blowup of $\Xi \cal{Q}(\kappa)$, denoted by $\widehat \Xi \cal{Q}(\kappa)$ may be constructed that admits a continuous $GL^+(2,\mathbb{R})$-action.

There are natural forgetful maps between the various spaces we have introduced:
\[
\widehat \Xi \calQ(\kappa) \to \Xi \calQ(\kappa) \to \calQ \Mgn \to \Mgn.
\]
Note that the underlying Riemann surface of an element of $\Xi \calQ(\kappa)$ may have some extra $\P^1$ components relative $\calQ \Mgn$.  This is because these surfaces have marked points at all the zeros of the quadratic differential.  Yet there is still a natural map $\Xi \calQ(\kappa) \to \calQ \Mgn$, since there is a canonical way to contract these extra $\P^1$ components (which become unstable when the zeros of the differential are forgotten).

\section{Tangent vectors and quadratic differentials}
\label{sec:tang-quad}
The goal of this section is to use a family of tangent vectors in $T\M_{g,n}$ converging to a vector in $T\partial \Mgn$ to produce converging quadratic differentials.  This is \Cref{prop:tangent}, which needs several technical assumptions.  \Cref{lem:hol-family} gives the existence of families of tangent vectors satisfying these assumptions and that are tangent to any fixed subvariety of $\M_{g,n}$.

\subsection{Preliminaries}
\paragraph{Coordinates for $\Mgn$ at the boundary.}
 We recall the description of $\Mgn$ at the boundary in terms of complex-analytic plumbing coordinates. The set-up we require below can be found, in more detail, in Section 3 of \cite{wolpert13}.

Let $X$ be the normalization of a point in a boundary stratum $\Delta_\Gamma$ of $\Mgn$; we will think of this as a multi-component Riemann surface with pairwise identified points $\{a_i,b_i\}_{i=1,\ldots,M}$.  We choose local coordinate maps $w_i,\zeta_i$ near these points -- we can assume that $w_i(a_i)=0$, $\zeta_i(b_i)=0$, and that the unit disc is contained in the image of both maps.  %

Since the boundary stratum $\Delta_\Gamma$ is itself a multi-component moduli space, its tangent space at $X$ is product of tangent spaces to individual moduli spaces.  Hence, by the discussion in \Cref{sec:diff}, we can choose Beltrami differential $\nu_j$, $i =1,\ldots,N$, giving a basis for $T_X\Mgn$. The $\nu_i$ can be chosen so that their supports are small open sets disjoint from the domains of the coordinates $w_k,\zeta_k$.
For $s\in \C^N$ small enough, let $X_s$ be the deformation of $X$ obtained by solving the Beltrami equations for $\nu(s) = \sum_i s_i\nu_i$ in the open set where it is supported. 
Then let $X_{s,t}$ be the Riemann surface obtained by plumbing $X_s$ with parameters $t=(t_1,\ldots,t_M)$.  That is, for each $i$ with $t_i\ne0$, we remove the open balls $w_i^{-1}(\{z:|z|<t_i\})$ and $\zeta_i^{-1}(\{z:|z|<t_i\})$ from $X_s$, and then glue together all pairs of points for which $w_i\zeta_i=t_i$.

Let $U = S\times \Delta^M$ be the resulting coordinate chart.  
The  vector fields \[
(\partial_{s_1},\ldots,  \partial_{s_N}, \partial_{t_1},\ldots, \partial_{t_M})
\] form a frame for the tangent bundle $T\Mgn|_{U}.$

\paragraph{Formula for $\partial_{t_i}$.}
Away from the boundary, a Beltrami differential representing the infinitesimal variation $\partial_{t_i}$ %
was calculated explicitly in \cite[Formula 2.2]{masur76} to be
\[
\frac{1}{2 t_i\log |t_i| } \frac{w_i}{\bar w_i} \frac{d\bar w_i}{dw_i}
\]
 in a neighborhood of  the node corresponding to $t_i$, and identically zero everywhere else.
 In particular, away from the boundary
 \begin{align}
\|\partial_{t_i}\| \leq \frac{1}{-2 t_i\log\ |t_i|}  \label{eq:HK}.  
 \end{align}

\begin{rmk}
Here is a sketch of the proof of the bound \eqref{eq:HK}.
We will find quasiconformal maps between the surfaces corresponding to plumbing parameters $t$ and $t'$, compute their Beltrami differentials, and then compute the derivative of this with respect to $t'$ at $t'=t$. %

Instead of directly finding the  maps between surfaces, it's convenient to conformally map the plumbing annulus (of outer radius 1, inner radius $t$) to a Euclidean cylinder of circumference $2\pi$, via the map $w\mapsto \log w$. Between these cylinders, there are nice affine quasiconformal maps, given by stretching.  The map going from the cylinder of height $h$ to $h'$ has Beltrami form of norm approximately $(h-h')/(h+h')$.  Hence the derivative with respect to $h'$ at $h'=h$ is around $-1/(2h)$.  We now move back to the plumbing annuli (taking the map between surfaces to be constant outside of plumbing regions).  Using the Chain Rule, we get that the Beltrami form here has norm
$$\approx \frac{1}{-2t \log t}.$$
\end{rmk}

\paragraph{Quadratic differentials of bounded area.}
   A potential issue later on is that there are converging families of quadratic differentials in the Hodge bundle with cylinders of very quickly growing moduli that lie on a part of the surface that is converging to a zero component of the limit $q$.  The contribution to area from this cylinder can be arbitrarily large.  However, the content of the next lemma is that this phenomenon does not occur for quadratic differentials lying over \textit{holomorphic} families of Riemann surfaces.  
\begin{lemma}
    \label{lem:area-cts}
    Let $f:\Delta \to \overline{\M}_{g,n}$ be a holomorphic family, with $f(0)\in \partial\Mgn$ and $f(z)\in \M_{g,n}$ for $z\ne 0$.  Let $q\in \calQ \Mgn$ be a finite area quadratic differential on $f(0)$, let $z_k\in \Delta$ satisfy $\lim_{k\to\infty} z_k\ = 0$, and let $q_k\in \calQ \Mgn$ lie over $f(z_k)$ such that $\lim_{k\to\infty} q_k = q$.  Then the areas satisfy
    \[
    \lim_{k\to\infty} \|q_k\| = \|q\|_1.  
    \]
\end{lemma}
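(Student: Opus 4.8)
The statement asserts that along a holomorphic family degenerating to the boundary, the area of the quadratic differentials is continuous, \emph{provided} the limit has finite area (which it does by hypothesis). The only possible failure of lower semicontinuity going the ``wrong'' way would be area escaping to infinity in a thin collar around a node; the thrust of the proof must be to rule this out using the holomorphicity of $f$. First I would fix plumbing coordinates for $\overline{\M}_{g,n}$ near $f(0)$ as in the ``Coordinates for $\Mgn$ at the boundary'' paragraph, writing $f(z) = X_{s(z), t(z)}$ with $s(z), t(z)$ holomorphic functions of $z$ vanishing (in the $t$-coordinates) at $z=0$ for every node that is actually pinched. The picture near each such node is a plumbing annulus $A_i(z) = \{ w : |t_i(z)| < |w| < 1\}$ whose modulus is $\frac{1}{2\pi}\log\frac{1}{|t_i(z)|} \sim \frac{1}{2\pi}\log\frac{1}{|z|^{m_i}}$, which grows only logarithmically in $1/|z|$.

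The key estimate is that the area $\int_{A_i(z_k)} |q_k|$ of the restriction of $q_k$ to this collar tends to $0$. To see this, note $q_k \to q$ locally uniformly on the open part of $f(0)$ away from the nodes, and near a node on the limit surface $q$ has at worst a double pole, so in a local coordinate $q = (\frac{c}{w^2} + \text{holomorphic})\, dw^2$; such a differential has \emph{finite} area on a punctured disk only if the double-pole coefficients are constrained, but in any case the area it assigns to a shrinking annulus $\{|w| < \epsilon\}$ minus a smaller one is controlled. More robustly, the mechanism to exclude is a cylinder on $X_{s_k,t_k}$ of modulus growing \emph{faster} than logarithmically, carrying definite area; since the plumbing annulus has modulus $O(\log \tfrac{1}{|z_k|})$ and a flat cylinder of circumference $\ell$ and modulus $M$ has area $\ell^2 M$, bounded area forces the core circumference to shrink, and then the convergence $q_k \to q$ pins down the area contribution. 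I would make this precise by combining: (i) for the part of the surface outside fixed small coordinate neighborhoods of the nodes, $\int |q_k| \to \int |q|$ by local uniform convergence and dominated convergence; (ii) for the collar part, an upper bound $\int_{A_i(z_k)} |q_k| \to 0$. Summing (i) over a suitable exhaustion and (ii) over the finitely many nodes, and using that $\|q\|_1$ is the sum over the components of $f(0)$, gives $\|q_k\| \to \|q\|_1$.

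For step (ii), the cleanest route is to transport the setup to the real multi-scale / multi-scale compactification of \Cref{sec:multi-scale}--\Cref{sec:real-multi}: lift $q_k$ (after rescaling and passing to a subsequence) to a converging family in $\Xi \calQ(\kappa)$ (or its real blowup), where the limit object records precisely the surfaces on all levels together with the prescribed vanishing orders at the nodes. The finite-area hypothesis on $q$ means the limit multi-scale differential has no ``lost'' area on lower levels relative to what $\calQ\overline{\M}_{g,n}$ sees — i.e., the top-level part already accounts for $\|q\|_1$ — and the plumbing parameters $t_i(z_k)$, being holomorphic in $z_k$, shrink at a definite polynomial rate that forces the scaling parameters on lower levels to vanish, killing their area contribution. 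The main obstacle, and where I would spend the most care, is precisely this step: controlling the area carried by the collar/lower-level parts of $q_k$ and showing it vanishes, rather than persisting as a positive amount in the limit (which is exactly the ``bad'' scenario flagged in the ``Quadratic differentials of bounded area'' paragraph). Holomorphicity of $f$ is what prevents the plumbing parameters from going to zero too slowly relative to the growth of cylinder moduli; quantitatively one uses that $-\log|t_i(z)| = O(-\log|z|)$ while a divergent-area family would need super-logarithmic modulus growth.
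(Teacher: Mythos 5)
Your overall route coincides with the paper's: pass to the multi-scale compactification, use perturbed period coordinates to estimate area contributions level-by-level, and use holomorphicity of $f$ to show the plumbing parameters are polynomially comparable to $|z|$, which then kills the lower-level area in the limit. However, there is a gap in the crucial step, which you yourself flag as needing the most care. Your assertion that polynomial decay of the Deligne--Mumford plumbing parameters $t_i'(z_k)$ ``forces the scaling parameters on lower levels to vanish'' at a polynomial rate does not follow directly. When one expresses the multi-scale scaling parameters $t_j$ in terms of the $t_i'$ by traversing the level graph, the exponents obtained can a priori be \emph{negative} rationals: each $t_i'$ is comparable to some fractional power of a product of consecutive $t_j^{a_j}$, but inverting those relations may produce negative powers of the $t_i'$. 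The paper resolves this with a short but non-obvious observation: one knows separately that $t_j(q_k)\to 0$ (because $q_k\to q^{MS}$ in $\Xi\calQ(\kappa)$ and $t_j(q^{MS})=0$) and $z_k\to 0$, and since $t_j(q_k)$ is of order $|z_k|^r$ for some rational $r$, these two vanishing facts together force $r>0$. Without this, the polynomial-in-$|z_k|$ bounds you need for the level-wise area estimates are not secured, and the lower-level area contributions are not yet shown to vanish. A smaller point: a finite-area $q$ has at worst \emph{simple} poles (a double pole already has infinite area), so the parenthetical remark about double-pole coefficients being constrained is moot here --- the constraint is that there are none.
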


\begin{proof}
    We choose a plumbing neighborhood $U$ of $f(0)$ in $\Mgn$.  Since the desired statement is about limits as $z\to 0$, we can assume that $f(\Delta)\subset U$.   There are plumbing parameters $t_i': U \to \C$ that specify how the nodes of $f(0)$ are smoothed.  A point $z\in U$ is in the boundary $\partial \Mgn$ iff $t_i'(z)=0$ for all $i$.  Each $t_i'$ is a holomorphic function on $U$, and hence $t_i' \circ f: \Delta \to \C$ is also holomorphic.  By considering the power series at $0$ of this composition, and using that $t_i'\circ f(z)=0$ iff $z=0$ (by the hypothesis on where the values of $f$ lie), we see that all the $t_i'$ are polynomially bounded above and below in terms of $z$.  That is, for each $i$ there is a positive integer $d_i$ and $C_i>0$ such that for all $z\in \Delta$,
    \[    
        \frac{1}{C_i} |z|^{d_i}  \le |t_i'\circ f(z)|  \le C_i |z|^{d_i}.
    \]
    
    Now it suffices to show the desired convergence along a subsequence of the $q_k$.  By passing to such a subsequence, we can assume the $q_k$ all lie in a fixed stratum $\calQ(\kappa)$, and that they converge to a limit $q^{MS}$ in the multi-scale space $\Xi \calQ(\kappa)$
    \footnote{\label{foot:subseq} To see this, take a subsequence such that the projective classes $[q_k]$ converge to some point $[q^{MS}]\in \P\Xi \calQ(\kappa)$, using the compactness of this space.  Since the multi-scale space contains more refined information than the Hodge bundle, the top level of $[q^{MS}]$ must equal the projective class of the non-zero components of $q$, and so we can choose the representative $q^{MS}$ to agree with $q$ on these components.  Since convergence on lower levels is the same in $\P\Xi \calQ(\kappa)$ as in $\Xi\calQ(\kappa)$, we see that in fact $q_k \to q^{MS}$ in $\Xi \calQ(\kappa)$.} 
    introduced in \Cref{sec:multi-scale}
     All $q_k$ for $k$ sufficiently large lie in a \emph{perturbed period coordinate} neighborhood $V$ of $q^{MS}$.  The top part of $q^{MS}$ agrees with the top part of $q$, and in particular, it also has area $\|q\|_1$.  
    
    We recall some additional basic facts about the multi-scale space, and its perturbed period coordinates.  The space is defined in \cite{bcggm19}, examples from the flat perspective are presented in \cite{dozier24}, and some of the flat geometric implications are discussed in \cite{dozier23}.  The boundary point $q^{MS}$ is specified by a \emph{multi-scale differential}, which consists of a meromorphic quadratic differential $\eta$ on each component of the underlying stable Riemann surface (together with combinatorial data: a level structure on the dual graph, and prong-matching data -- the latter will not be important for us). To smooth $q^{MS}$, for each $\eta$ at level $m$, we glue the rescaled differential $t_1^{a_1} \cdots t_k^{a_m} \eta$ to the analogously rescaled differentials on the other components (we may also need to add in a modification differential to account for certain residues, but this is small and won't affect our area estimates below). Here the $t_j$ are \textit{scaling parameters}, which constitute some of the coordinates on $V$.  Each $a_i$ is a positive integer associated to level $i$ that can be computed in terms of orders of certain poles of $q^{MS}$; the exact formula will not be important here.
    
    To smooth each horizontal node, a different plumbing procedure is used, governed by a \textit{horizontal node parameter} $t$; this produces a flat cylinder, whose ratio of height to circumference is on the order of $\log|t|^{-1}$.   
    
    If $\eta$ does not have horizontal nodes, then the area it contributes to a smooth surface in $V$ is 
    \begin{align}
    O\left( t_1^{a_1} \cdots t_m^{a_m}\right). \label{eq:no-horiz} 
    \end{align}
    For each horizontal node on $\eta$ (which either connects a component to itself, or to some other component at the same level), there is an area contribution of 
    \begin{align}
        O\left( (t_1^{a_1} \cdots t_m^{a_m} )^2 \log |t|^{-1} \right), \label{eq:horiz}
    \end{align}
    since the circumference of the cylinder is $O(t_1^{a_1} \cdots t_m^{a_m})$.

    Now we consider the relation between the Riemann surface plumbing parameters $t_i'$ and the multi-scale plumbing coordinates $t_j,t$.  Each horizontal node parameter $t$ has size on the order of the $t_i'$ for that node, since both are on the order of the modulus of the largest conformal annulus homotopic to the corresponding vanishing cycle. %
    From our discussion at the beginning of the proof, we then see that $|t|$ is polynomially bounded in terms of $|z|$.  

    The size of the plumbing parameter $t_i'$ corresponding to a node that connects two components at levels $m$ and $\ell$, with $m>\ell$ (with respect to the level structure that is part of the data of $q^{MS}$) is of the same order as some (possibly fractional) power of the product  $t_{m-1}^{a_{m-1}} \cdots t_\ell^{a_\ell}$.  This, as previously, is because both quantities are on the order of the modulus of the largest conformal annulus homotopic to the vanishing cycle of the node.   %
    
    Using the fact that the level graph is connected, we can go in reverse, expressing the order of each $t_j$ in terms of a product of rational, possibly negative, powers of the $t_i'$. 
    (By exploring the graph, for each level $k$, one expresses $t_{-1}^{a_{-1}} \cdots t_k^{a_k}$ in terms of a product of some rational, possibly negative, powers of the $t_i'$.  One can then divide to express each $t_j$.) 
    From the discussion at the beginning of the proof, this means that $t_j(q_k)$ is on the order of $|z_k|^r$, where $r$ is some rational, possibly negative, number. We know that $\lim_{k\to\infty} t_j(q_k)=0$, since $q_k \to q^{MS}$ and $t_j(q^{MS})=0$.  Since $\lim_{k\to\infty} z_k$ is also $0$, we conclude that the exponent $r$ must in fact be positive.  

    Now returning back to our estimate of area of $q_k$, we see that the contribution \eqref{eq:no-horiz} has order bounded above by a power of $|z_k|$, while for \eqref{eq:horiz} it is some power of $|z_k| \log|z_k|^{-1}$.  The point is that both of these tend to $0$ as $k\to \infty$.  Hence the area contributed to $q_k$ by any $\eta$ not at top level tends to $0$ as $k\to\infty$.  From the top level components, the area contribution tends to the area of the top part of $q^{MS}$.  It follows that the area $\|q_k\|$ tends to the area of the top part of $q^{MS}$, which 
    equals $\|q\|_1$.  
    
\end{proof}

\subsection{Converging vectors and quadratic differentials}

Our goal now is to start with a family of tangent vectors to $\M_{g,n}$ that converge to a tangent vector $v$ to the boundary $\partial \M_{g,n}$, and then get a corresponding converging family of quadratic differentials.  We will need a technical assumption on the family, and the resulting quadratic differential will be pinned down only up to some non-negative scalars.  

\paragraph{Norms on products.}
We will have to compare norms of tangent vectors and quadratic differentials.
Recall that in \Cref{par:multi-comp}, we defined the $\|\cdot \|_1$ norm on quadratic differentials as the sum of areas.  For tangent vectors to a single moduli space $\calM_{g,n}$ we defined the \TM norm in \Cref{sec:teich-norm}.  And in \Cref{defn:Lp-tot-geod}
we defined $\|\cdot\|_\infty$ on $T\calT$; we extend in the obvious way to $T\calM$.  

We will use the Hodge bundle $\calQ\Mgn$ introduced in \Cref{sec:hodge}.  In fact, we will primarily work with the subspace $\calQ_{<\infty}\Mgn$ of \textit{finite area} differentials, i.e. those with no double poles.  
\begin{defn}
\label{defn:top-level}
    For $v\in T_X\M$, where $\M$ is any multi-component moduli space, we define the \emph{top level tangent vector} $\top(v)$ to be the element of $T_X\M$ that is zero on each component $X_i$ of $X$ where $\|v^i\|< \|v\|_\infty$, and agrees with $v$ on all other components. 
\end{defn}

\begin{prop}\label{prop:tangent}
Let $\M_{g,n}$ be a single moduli space and  $f:\Delta \to T\overline{\M}_{g,n}$ 
be a holomorphic family with
\[
f(z) = (X({s(z),t(z)}),v(z)) = \sum_{i=1}^N c_i(z) \partial_{s_i} + \sum_{j=1}^M d_j(z) \partial_{t_j}\in T\overline{\M_{g,n}}
\] such that 
\begin{enumerate}[1.]
\item $(X(z):=X(s(z),t(z)),v(z))$ is in $T\M_{g,n}$ when $z\ne 0$. \label{item:open}
\item $(X(0),v(0))\in  T\partial \M_{g,n}$ and $v(0)$ is not identically zero on every component, \label{item:bdy}
\item $d_j(z) = O(t_j(z))$ for $j=1,\ldots m$, \label{item:tranverse-decay}

\end{enumerate} 

For $z\in \Delta^*$, let 
\[
q(z) := \frac{\phi^{-1}(v(z))}{\|\phi^{-1}(v(z))\|}.  
\] 
Then any sequence from the family $q(z)$ with $z\to 0$ has an accumulation point $q \in \calQ_{<\infty}\Mgn$, and any such $q$ agrees with $\phi^{-1}(\top(v))$, up to rescaling each factor $i$ by some non-negative real $c_i$. Furthermore, $q$ is not identically zero.

\end{prop}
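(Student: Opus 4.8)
The plan is to extract the limit $q$ via compactness in the multi-scale space, identify its top level using \Cref{lem:cvx-uniq}, control the area using hypothesis \ref{item:tranverse-decay} together with \Cref{lem:area-cts}, and finally rule out total vanishing. First I would recall that $v(z) \to v(0)$ in $T\overline{\M}_{g,n}$ (in the plumbing frame), so that $\|v(z)\|_\infty \to \|v(0)\|_\infty =: c > 0$ (using hypothesis \ref{item:bdy}), and that $\top(v(0))$ is the tangent vector supported on those components $X_i$ of $X(0)$ where $\|v(0)^i\|$ attains the maximum $c$. The unit-norm quadratic differentials $q(z) = \phi^{-1}(v(z))/\|\phi^{-1}(v(z))\|$ live in $\calQ^1\M_{g,n}$; I want to take a sequence $z_k \to 0$ and pass to a subsequence so that $q(z_k)$ converges in some compactification. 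The natural choice is: first pass to a subsequence lying in a single stratum $\calQ(\kappa)$, then use compactness of $\P\Xi\calQ(\kappa)$ to get convergence of projective classes, and argue as in \Cref{foot:subseq} that this lifts to honest convergence $q(z_k) \to q^{MS}$ in $\Xi\calQ(\kappa)$, hence (forgetting lower levels and contracting bubbles) to convergence $q(z_k) \to q$ in the Hodge bundle $\calQ\overline{\M}_{g,n}$, where $q$ is the top part of $q^{MS}$.

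Next I would pin down which components $q$ is supported on and what it equals there. Since $q(z)$ is holomorphic in $z$ for $z \ne 0$ and $v(z)$ extends holomorphically over $0$, the family $z \mapsto (X(z), q(z))$ extends to a holomorphic family into $\calQ\overline{\M}_{g,n}$ (or at least into $\Xi\calQ(\kappa)$), so hypothesis \ref{item:tranverse-decay}, which is exactly the decay condition on the transverse-to-boundary coefficients $d_j(z) = O(t_j(z))$, lets me invoke \Cref{lem:area-cts}: the areas satisfy $\|q(z_k)\| \to \|q\|_1$. But $\|q(z_k)\| = 1$ for all $k$ (they are unit norm), so $\|q\|_1 = 1$; in particular $q$ is \emph{not identically zero}, which is the last assertion. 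Moreover this forces $q$ to have no double poles — it lies in $\calQ_{<\infty}\M_{g,n}$ — because a double pole would make the plumbed-up area blow up, contradicting the uniform area bound (this is the ``quadratic differentials of bounded area'' phenomenon the preceding discussion is set up to handle). To identify $q$ with $\phi^{-1}(\top(v(0)))$ up to per-factor rescaling: on each component $X_i$ of $X(0)$, the pairing $\int_{X_i} q(z_k) \, \tfrac{v(z_k)^i}{\|\cdot\|}$ behaves, in the limit, like the pairing of the limiting differential with $v(0)^i$; on components where $\|v(0)^i\| < c$ the normalization $q(z)=\phi^{-1}(v(z))/\|\phi^{-1}(v(z))\|$ concentrates mass away from $X_i$ (the relative weight $\|v(z)^i\|$ is strictly smaller), so $q$ vanishes there, while on the top-level components $q$ must, by the uniqueness characterization of \Cref{lem:cvx-uniq} applied component-by-component, be a non-negative multiple of the normalized $\phi^{-1}(v(0)^i)$. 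Collecting the multiples as the constants $c_i \ge 0$ gives that $q$ agrees with $\phi^{-1}(\top(v(0)))$ up to rescaling factor $i$ by $c_i$.

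The main obstacle I anticipate is the bookkeeping in the last step — tracking the normalization constant $\|\phi^{-1}(v(z))\|$ as $z \to 0$ and showing that in the limit the mass distributes across the top-level components in exactly the ratios dictated by $\top(v(0))$, and that on each such component the limiting differential is the \emph{correct} (convex-dual) one rather than some other unit differential. This is where \Cref{lem:cvx-uniq} does the real work: it says that the right differential on a factor is uniquely characterized by the two conditions $\int qv = \|v\|$ and $\|q\|=1$, so I need to verify both conditions pass to the limit. The first follows from continuity of the pairing under the convergence $q(z_k) \to q$ (here again the finite-area control from \Cref{lem:area-cts} is what makes the pairing continuous, since mass cannot escape into growing cylinders), and the second from the area computation above. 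A secondary technical point is making sure the subsequential compactness argument genuinely yields convergence in the Hodge bundle and not just projectively — but this is handled exactly as in \Cref{foot:subseq}, using that the top level of the multi-scale limit is forced to match the non-vanishing part of the Hodge-bundle accumulation point.
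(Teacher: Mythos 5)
The identification step --- that the accumulation point $q$ vanishes on every component with $\|v^i\|<\|v\|_\infty$ and agrees with $\phi^{-1}(\top(v))$ up to nonnegative factors --- is where your proposal has a genuine gap. Your mechanism for it (``the normalization concentrates mass away from $X_i$ because the relative weight $\|v(z)^i\|$ is strictly smaller'') is not meaningful: for $z\neq 0$ the surface $X(z)$ is a single connected surface, so $v(z)$ has no components and no componentwise weights; the componentwise structure exists only at $z=0$. In the paper this conclusion is not a soft concentration statement but the endpoint of a chain of inequalities, $\lim_n\|v_n\| = \int_X qv \le \sum_i\|q^i\|\,\|v^i\| \le \|q\|_1\|v\|_\infty \le \|v\|_\infty$, forced to be equalities; getting there requires two inputs absent from your plan: (a) continuity of the pairing, $\int_{X_n}\xi_n v_n \to \int_X \xi v$ for bounded-area $\xi_n\to\xi$ (\Cref{claim:pairing-cts}), whose proof is exactly where hypothesis \ref{item:tranverse-decay} is used, combined with Masur's estimate $\|\partial_{t_j}\| = O\bigl(1/(|t_j|\log|t_j|^{-1})\bigr)$, to kill the $d_j\partial_{t_j}$ contributions, together with convergence of Beltrami representatives of the $\partial_{s_i}$; and (b) the lower bound $\liminf_n\|v_n\|\ge\|v\|_\infty$, proved by pairing against an auxiliary family $\xi_n\to\xi$ with $\int_X\xi v=\|v\|_\infty$ and $\|\xi_n\|\to 1$ (this is where \Cref{lem:area-cts} is genuinely needed). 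Only after combining (a), (b) and $\|v_n\|=\int_{X_n}q_nv_n$ from \Cref{lem:cvx-uniq} do the vanishing on non-top components and the componentwise identification via \Cref{lem:cvx-uniq} follow. The paper even notes that without hypothesis \ref{item:tranverse-decay} the inequality $\lim\|v_n\|\le\|v\|_\infty$ can fail, so no generic ``continuity of the pairing'' assertion can replace step (a); your sentence deferring this to ``finite-area control from \Cref{lem:area-cts}'' does not supply it, since the difficulty is controlling the Beltrami representatives of $v_n$ near the nodes, not the areas of the differentials.

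Two secondary points. First, you misattribute the role of hypothesis \ref{item:tranverse-decay}: \Cref{lem:area-cts} does not use it at all (its hypotheses concern only the holomorphic base family and Hodge-bundle convergence); also $q(z)$ is not holomorphic in $z$ (the map $v\mapsto\phi^{-1}(v)$ is not holomorphic), though fortunately that is irrelevant to \Cref{lem:area-cts}. Second, your extraction of the limit runs \Cref{foot:subseq} backwards: that argument starts from Hodge-bundle convergence to a finite-area limit and then matches a multi-scale limit, whereas you start projectively in $\P\Xi\calQ(\kappa)$ and must still rule out loss of mass and double poles (top-level horizontal nodes) before you even have a finite-area $q$ over $X(0)$; the clean route, and the paper's, is compactness of the bounded-area locus $\calQ_{[0,1]}\Mgn$. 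Granting that, your direct application of \Cref{lem:area-cts} to the sequence $q(z_k)$ to get $\|q\|_1=1$, hence $q\not\equiv 0$, is legitimate and is a mild shortcut (the paper instead extracts $\|q\|_1=1$ from the equality chain), but it does not substitute for the missing steps (a) and (b) above.
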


\begin{rmk}
    At present, we do not know if the conclusion of \Cref{prop:tangent} holds if we remove condition \ref{item:tranverse-decay}.
\end{rmk}

\begin{proof}
Here is the idea of the proof.  The norm on tangent vectors comes from a sup norm on Beltrami differentials, and thus at the boundary should limit to a sup of the norms of the tangent vectors on the component Riemann surfaces.  This is not literally true, because the Beltrami differentials efficiently representing the tangent vectors might have significant contributions from the thin parts of the Riemann surfaces, and these are hard to control in the limit.  However, in our setting condition \ref{item:tranverse-decay} means that this thin part contribution is not substantial (this is quite technical and makes the proof complicated).  Then we use this, together with a characterization of the map $\phi$ in terms of pairings between quadratic differentials and Beltrami differentials, to control the corresponding quadratic differentials.

Let $\mathcal{Q}_{[0,1]}\Mgn \subset \mathcal{Q} \Mgn$ be the locus of differentials with area at most $1$.  Let $(X(z_n), v(z_n))$ be a sequence converging to $(X(0), v(0))$ and $q$ be an accumulation point of $q(z_n)$ in $\calQ_{[0,1]}\Mgn$, which exists since this space is compact (see \cite[p.1238]{Mcmullen2013NavigatingMS} for the case of Abelian differentials; the quadratic differential case is similar). To prove the desired result, it suffices to assume that $q_n\to q$. %
The differential $q=(q^1,\ldots,q^k)$ can be decomposed into components and
\[
\|q\|_1 = \operatorname{area}(q) = \sum_{i=1}^k \operatorname{area}(q^i)\leq 1.
\]
For brevity, write \[
 X_n = X(z_n), \ X= X(0), \ v_n = v(z_n), \ v= v(0).
\]

\begin{claim}
\label{claim:pairing-cts}
Suppose $\xi_n\to \xi$, with $\xi_n$ an element of $\calQ \M_{g,n}$ lying above $X_n$ such that $\|\xi_n\|$ is bounded independent of $n$, and $\xi \in \partial \calQ_{<\infty} \Mgn$ lying above $X$.  Then
\[
\lim_{n\to\infty} \int_{X_n} \xi_n v_n = \int_{X} \xi v.
\]
\end{claim}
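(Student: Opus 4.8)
The plan is to reduce the convergence of the pairings $\int_{X_n}\xi_n v_n$ to a local statement, by splitting each surface into a ``thick'' part that is uniformly away from the nodes and a ``thin'' part consisting of the plumbing collars, and then to control each contribution separately. On the thick part, the surfaces $X_n$ converge in $C^\infty$ (in suitable charts) to the corresponding part of $X$, the differentials $\xi_n$ converge to $\xi$, and — crucially — the Beltrami differentials $v_n$ representing the tangent vectors can be taken with support in fixed compact sets disjoint from the plumbing regions (this is exactly how the frame $(\partial_{s_i},\partial_{t_j})$ was set up in the plumbing-coordinates paragraph, where the $\nu_j$ have support disjoint from the collar coordinates $w_k,\zeta_k$, and the formula for $\partial_{t_j}$ localizes that contribution near the $j$-th node). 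Hence on the thick part the integrand converges uniformly and the integrals converge to $\int_X \xi v$, where $\xi$ means the finite-area (top-level) part and $v$ likewise decomposes over components.

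The main work is the estimate on the thin part, i.e.\ on the plumbing collar for each node $j$. Here I would use three inputs: first, $\|\xi_n\|_{L^1}$ is bounded uniformly by hypothesis, so $\int_{\text{collar}_j}|\xi_n|$ is bounded; second, the Beltrami differential representing $v_n$ on the collar is governed by the $\partial_{t_j}$ piece, whose $L^\infty$ norm on the $j$-th collar is $O\!\big(1/(t_j\log|t_j|)\big)$ by \eqref{eq:HK}, multiplied by the coefficient $d_j(z_n)$; third — and this is where hypothesis \ref{item:tranverse-decay} of \Cref{prop:tangent} enters — $d_j(z_n)=O(t_j(z_n))$, so the product $d_j(z_n)\cdot\|\partial_{t_j}\|$ is $O\!\big(1/\log|t_j(z_n)|\big)\to 0$. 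Since $\int_{\text{collar}_j}|\xi_n v_n| \le \big(\sup_{\text{collar}_j}|v_n|\big)\cdot\|\xi_n\|_{L^1}$, the collar contribution to the pairing tends to $0$; the same reasoning handles the $\partial_{s_i}$ contributions to the collar, which vanish identically there. On the $X$ side, the collars have degenerated to nodes, so there is nothing to add. This matches the fact that $\xi$ lies in $\partial\calQ_{<\infty}\Mgn$ and that the pairing $\int_X\xi v$ only sees the surviving components.

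Putting the pieces together: $\int_{X_n}\xi_n v_n = (\text{thick part}) + \sum_j (\text{collar}_j)$, the thick part converges to $\int_X \xi v$ by uniform convergence on compacta, and each collar term $\to 0$. The step I expect to be the genuine obstacle is making the thick/thin decomposition compatible with \emph{all} of $\xi_n$, $v_n$, and the varying complex structures simultaneously — in particular, verifying that the representative of $v_n$ used can indeed be chosen supported away from the collars up to an error that is itself $O(1/\log|t_j|)$ on the collars, and checking that the ``modification differential'' contributions (the small residue-correcting terms mentioned in the multi-scale discussion) do not spoil the $L^1$ bound on $\xi_n$ near the nodes. Once the localization is set up cleanly, the estimates are routine given \eqref{eq:HK} and hypothesis \ref{item:tranverse-decay}.
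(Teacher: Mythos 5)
Your proposal follows essentially the same route as the paper: decompose $v_n$ into the $\partial_{s_i}$ part (supported away from the collars) and the $\partial_{t_j}$ part (supported on the collars), handle the former by convergence on compacta of the surfaces, the quadratic differentials, and the representing Beltramis, and kill the latter by combining the uniform $L^1$ bound on $\xi_n$ with the estimate $|d_j(z_n)|\cdot\|\partial_{t_j}\| = O\bigl(1/\log|t_j(z_n)|\bigr)\to 0$ from hypothesis \eqref{item:tranverse-decay} and \eqref{eq:HK}. Two small notes: the worry about ``modification differentials'' is moot here since the $L^1$ bound on $\xi_n$ is simply a hypothesis of the claim, and the precise mechanism the paper supplies for the thick-part convergence is the quasiconformal-topology interpretation of $\xi_n\to\xi$ (\cite[Proposition 3.5]{bcggm19}) together with explicit transport of the $\nu_i$ through the plumbing construction.
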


\begin{proofclaim}
By assumption  $v(z)$  converges to a vector $v=v(0)= (v^1(0),\ldots,v^n(0))$ at the multi-component Riemann surface $X = (X_1,\ldots, X_n)$.
The tangent vector $v(z)$ can be expressed in terms of the frame $(\partial_s,\partial_t)$ as 
\[
\begin{gathered}
v(z) = u(z)+R(z), \\
u(z):=\sum_{i=1}^N c_{i}(z)\partial_{s_i},\\
R(z):= \sum_{j=1}^M d_{j}(z)\partial_{t_j}.
\end{gathered}
\]

Condition \ref{item:tranverse-decay} implies 
\[
 d_{j}(z) = O(t_j(z)), 
\]
and in particular $d_j(0)=0$, so $u(0)=v(0)$.  
For $z\neq 0$, the tangent vectors $u$ and $R$ can be represented by Beltrami differentials on the smooth surface $X(z)$.

Now we claim that $\partial_{s_i}$ on $X_n$ can be represented by a Beltrami differential $\nu_{i,n}$ that converges to $\nu_i$, in the sense that there is an exhaustion of $X-\{\text{nodes}\}$ by compact sets $K_n$ and quasiconformal maps $f_n:K_n \to X_n$ with dilatation tending to $0$, such that the pullbacks $f_n^*(\nu_{i,n})$ converge to $\nu_i$ uniformly.  The quasiconformal maps we use are furnished from the local solutions of the Beltrami equation in the description of the surfaces $X_{s,0}$ composed with the conformal identification, away from the nodes, of $X_{s,0}$ with $X_{s,t}$ given by the plumbing construction.

The $\nu_{i,n}$ are defined by first finding representatives of $\partial_{s_i}$ on deformed surfaces $X_s=X_{s,0}$ within the boundary using the the composition rule for quasiconformal maps (see \cite[p. 425]{wolpert13}).
From this description one also sees that these Beltramis converge to $\nu_i$ in the required sense.  This representative of $\partial_{s_i}$ on $X_s$ is supported away from the nodes, and then the identification given by plumbing allows us to transport this Beltrami to define a Beltrami on any $X_{s,t}$; this is how we define $\nu_{i,n}$ on $X_n$.  Since the relevant Beltramis are supported away from the nodes, the deformations specified by them commute with the plumbing deformation.  Hence $\nu_{i,n}$ does in fact represent $\partial_{s_i}$.  We also see that $\nu_{i,n}$ converges to $\nu_i$ in the required sense.  

From the above, and using the ``quasiconformal topology" interpretation of $\xi_n\to \xi$ (see \cite[Proposition 3.5]{bcggm19}) it follows that 
\begin{align}
    \lim_{n\to\infty} \int_{X_n} \xi_n \partial_{s_i} = \lim_{n\to\infty} \int_{X_n} \xi_n \nu_{i,n} = \int_X \xi \nu_i = \int_X \xi \partial_{s_i}. \label{eq:estimate-s}  
\end{align}

To deal with the $\partial_{t_j}$ components of $v$, we note, using the assumption that $\|\xi_n\|$ is bounded and  the estimates \ref{item:tranverse-decay} for $d_j(z_n)$ and \eqref{eq:HK} for $\partial_{t_j}$, that 
\begin{align}
    \left| \int_{X_n} \xi_n d_j(z_n) \partial_{t_j}\right| &\le  \|\xi_n\| \cdot |d_j(z_n)| \cdot \|\partial_{t_j}\| =O\left(t_j(z_n) \cdot \frac{1}{-t_j(z_n) \log|t_j(z_n)|} \right)\\
    & = O\left( \frac{1}{- \log|t_j(z_n)|} \right) 
        \to_{n\to\infty} 0. \label{eq:estimate-t}
\end{align}

We now combine the estimates \eqref{eq:estimate-s} and \eqref{eq:estimate-t} to get 
\begin{align*}
    \int_{X_n} \xi_n v_n & = \sum_{i=1}^N c_i(z_n) \int_{X_n} \xi_n \partial_{s_i} + \sum_{j=1}^M \int_{X_n} \xi_n d_j(z_n) \partial_{t_j} \\
    & \to_{n\to\infty} \sum_{i=1}^N c_i(0) \int_{X} \xi \partial_{s_i} + 0= \int_X \xi u(0)= \int_X \xi v(0),
\end{align*}
and we are done.  
\end{proofclaim}

Now breaking into components, and using the characterization \eqref{eq:mu-char} of $\|v^i\|$ in terms of the pairing with quadratic differentials, gives
\[
\begin{split}
\int_{X} qv
&=\sum_{i=1}^n \int_{X^i} q^iv^i  \leq  \sum_{i=1}^n \sup_{\{q_0^i: \|q_0^i\|=\|q^i\|\}} \int_{X^i} q_0^i v^i \\
&= \sum_{i=1}^n \|q^i\| \|v^i\|\leq \|q\|_1 \|v\|_{\infty}\\
&\leq \|v\|_{\infty}.
\end{split}
\]

Now \Cref{lem:cvx-uniq} gives $\|v_n\| = \int_{X_n} q(z_n)v_n $. 
 Using this, \Cref{claim:pairing-cts} applied with $q_n\to q$, and some of the above, we get the following chain of inequalities which will use after proving the next Claim:
\begin{align}
    \lim_{n\to\infty} \|v_n\| = \sum_{i=1}^n \int_{X^i} q^iv^i \le  \sum_{i=1}^n \|q^i\| \|v^i\|\leq \|q\|_1 \|v\|_{\infty} \leq \|v\|_{\infty}. \label{eq:ineq-chain}
\end{align}

We remark here that the inequality $\lim_{n\to\infty} \|v_n\| \le \|v\|_\infty$ could fail if we did not assume condition \ref{item:tranverse-decay}.  To see this, choose a smooth path $\gamma:[0,1] \to \overline \M_{g,n}$ with $\gamma([0,1)) \subset \M_{g,n}$, $\gamma(1) \in\partial \M_{g,n}$, and such that $v := d\gamma/dt|_{t=1}$ is in $T\partial \M_{g,n}$.    We get a family $\gamma'(t)$ of tangent vectors, and taking values at $t_n$ tending to $1$ gives a sequence $v_n\to v$ satisfying conditions \ref{item:open} and \ref{item:bdy}.  The length $|\gamma|$ in the Teichm\"uller metric is $\infty$, since the boundary is infinitely far away. This implies %
$$\limsup \|v_n\|=\infty > \|v\|_\infty.$$

\begin{claim} 
\label{claim:norm-lim}
We have
\[
\liminf_{n\to\infty} \|v_n\| \geq \|v\|_{\infty}.
\]
\end{claim}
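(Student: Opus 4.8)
The plan is to show that the lower bound $\liminf_{n\to\infty}\|v_n\| \ge \|v\|_\infty$ by exhibiting, for \emph{one} component $i_0$ realizing the maximum $\|v\|_\infty = \|v^{i_0}(0)\|$, a single fixed quadratic differential on that component whose pairing with $v_n$ stays close to $\|v^{i_0}(0)\|$. Concretely, let $i_0$ be an index with $\|v^{i_0}(0)\| = \|v\|_\infty$, and by \Cref{lem:cvx-uniq} (applied to the single \TM space $\calT_{g_{i_0},n_{i_0}}$, where $v^{i_0}(0)$ is a nonzero tangent vector since we assumed $v(0)$ is not identically zero\,---\,wait, we actually need $v^{i_0}(0)\ne 0$, which holds because $\|v^{i_0}(0)\|=\|v\|_\infty>0$) pick the unit-norm $\psi := \phi_X^{-1}(v^{i_0}(0))/\|\phi_X^{-1}(v^{i_0}(0))\|$ on the component $X_{i_0}$ of $X$, characterized by $\int_{X_{i_0}}\psi\, v^{i_0}(0) = \|v^{i_0}(0)\|$ and $\|\psi\|=1$. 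Extend $\psi$ to a stable quadratic differential $\xi$ on all of $X$ by declaring it to be identically zero on every other component; then $\xi \in \calQ_{<\infty}\Mgn$ lies above $X$, has $\|\xi\|_1 = 1 < \infty$, and in particular $\xi \in \partial \calQ_{<\infty}\Mgn$.

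Next I would produce a sequence $\xi_n \in \calQ\M_{g,n}$ lying above $X_n$ with $\xi_n \to \xi$ in the multi-scale (``quasiconformal'') topology and with $\|\xi_n\|$ bounded. This is exactly the kind of approximation already used implicitly in the proof of \Cref{lem:area-cts} and in \Cref{claim:pairing-cts}: since $\xi$ is supported on the single component $X_{i_0}$ as a \emph{finite-area} differential, one can take a multi-scale smoothing of $\xi$ (top level $X_{i_0}$, everything else lower level and rescaled to zero) and read off $\xi_n$ from perturbed period coordinates; by \Cref{lem:area-cts}, or rather the area estimates in its proof, $\|\xi_n\| \to \|\xi\|_1 = 1$, so the norms are bounded. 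The point is simply that we need \emph{some} bounded-norm approximating sequence, not a canonical one. With such $\xi_n$ in hand, \Cref{claim:pairing-cts} gives
\[
\lim_{n\to\infty}\int_{X_n}\xi_n v_n = \int_X \xi v = \int_{X_{i_0}}\psi\, v^{i_0}(0) = \|v^{i_0}(0)\| = \|v\|_\infty.
\]
On the other hand, by the characterization \eqref{eq:mu-char} of the \TM norm, $\left|\int_{X_n}\xi_n v_n\right| \le \|\xi_n\|\,\|v_n\|$ for each $n$ (pairing a bounded-area quadratic differential on $X_n$ against the Beltrami class $v_n$). Hence $\|v\|_\infty = \lim_n \int_{X_n}\xi_n v_n \le \limsup_n \|\xi_n\|\,\|v_n\| = 1\cdot \liminf$... here I need to be slightly careful: since $\|\xi_n\|\to 1$, we get $\|v\|_\infty \le \liminf_n \|v_n\|$ provided we also know $\liminf\|v_n\|$ is what controls the product limit, which follows because $\int_{X_n}\xi_n v_n \le \|\xi_n\|\|v_n\|$ termwise and $\|\xi_n\|\to1$.

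I expect the main obstacle to be the construction of the approximating sequence $\xi_n$ above $X_n$ with controlled norm and with $\xi_n\to\xi$ in the right topology\,---\,this is the same technical heart as in \Cref{claim:pairing-cts} and \Cref{lem:area-cts}, where one must argue that a differential concentrated on a single (top-level) component, extended by zero, does genuinely arise as a limit of honest quadratic differentials on the smooth surfaces $X_n$ with area tending to $1$ rather than blowing up. Everything else is bookkeeping: matching up the component $i_0$, invoking \Cref{lem:cvx-uniq} on a single factor, and combining the termwise bound $\int \xi_n v_n \le \|\xi_n\|\|v_n\|$ with \Cref{claim:pairing-cts}. One subtlety worth flagging in the write-up: the pairing $\int_{X_n}\xi_n v_n$ may a priori be complex, so one should phrase the inequality as $\Re\int_{X_n}\xi_n v_n \le |\int_{X_n}\xi_n v_n| \le \|\xi_n\|\|v_n\|$, and note that the limit $\|v\|_\infty$ is real and nonnegative, so no information is lost; alternatively, since $\psi$ was chosen via \Cref{lem:cvx-uniq} precisely so that $\int_{X_{i_0}}\psi v^{i_0}(0)$ is the real number $\|v^{i_0}(0)\|$, the limiting pairing is automatically real, and one only needs the termwise bound with absolute values.
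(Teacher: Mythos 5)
Your proposal is correct and follows essentially the same route as the paper: pick $\xi$ over $X$ with $\|\xi\|_1=1$ and $\int_X\xi v=\|v\|_\infty$, approximate by $\xi_n$ over $X_n$ with bounded norm, apply \Cref{claim:pairing-cts}, then use the termwise bound $\int_{X_n}\xi_n v_n\le\|\xi_n\|\|v_n\|$. One difference worth noting: where you flag ``the construction of the approximating sequence $\xi_n$'' as the main obstacle and sketch a multi-scale smoothing, the paper does something lighter --- since $\calQ\Mgn$ is already a vector bundle, local triviality produces \emph{some} $\xi_n\to\xi$ over $X_n$ with no extra work, and then \Cref{lem:area-cts} (applied because the $X_n$ come from the fixed holomorphic family $f$ in the hypotheses of \Cref{prop:tangent}) gives $\|\xi_n\|\to 1$ directly, which is all that is needed. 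Your multi-scale construction risks producing $\xi_n$ over smoothings that are not actually the given $X_n=f(z_n)$, so the vector-bundle route is both shorter and safer here. Your remark about the pairing possibly being complex is a fair point that the paper elides; as you say, it is harmless because the target $\|v\|_\infty$ is real and nonnegative and one can work with real parts or absolute values throughout.
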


\begin{proofclaim}
We start by finding a $\xi\in \calQ \Mgn$ lying above $X$ with $\|\xi\|_1 =1$ and $\int_X \xi v = \|v\|_\infty$.  Using that $\calQ \Mgn$ is a vector bundle, we can find $\xi_n\in \calQ \Mgn$ lying above $X_n$ with $\xi_n \to \xi$.  Now convergence in the Hodge bundle does not in general imply convergence of areas, but since our $\xi_n$ lie over a holomorphic family, we can apply \Cref{lem:area-cts} to conclude that $\lim_{n\to\infty} \|\xi_n\| =1$, and in particular these quantities are bounded.  We now apply \Cref{claim:pairing-cts}, to get that
\[
\lim_{n\to\infty} \int_{X_n} \xi_n v_n = \int_X \xi v = \|v\|_\infty.
\]

Now $\|v_n\| \ge \frac{\int_{X_n} \xi_n v_n}{\|\xi_n\|}$, and hence  
\[
\liminf_{n\to\infty } \|v_n\| \ge \liminf_{n\to\infty} \frac{\int_{X_n} \xi_n v_n }{\|\xi_n\|}
=\frac{\|v\|_\infty}{1},
\]
as desired. 
\end{proofclaim}

By the above Claim, all the inequalities in \eqref{eq:ineq-chain} must be equalities, and so $\|q\|_1 = 1$ and 
$$\sum_{i=1}^n \| q^i\| \|v^i\| =\|v\|_\infty.$$
Note that $\sum_{i=1}^n \|q^i\| =\|q\|_1= 1$, which means that the left-hand side of the above is a weighted average of the $\|v^i\|$.  Now $\|v\|_\infty = \max_i \|v^i\|$, so the only way the above inequality can hold is for $q^i=0$ whenever $\|v^i\|< \|v\|_\infty.$

From the inequalities in \eqref{eq:ineq-chain} actually being equalities, we also get that
\[
\int_{X^i} q^iv^i = \| q^i\| \|v^i\|, \text{\ for all } i.
\]
 In particular for all $i$ such that $\|v^i\|=\|v\|_\infty$, the above together with \Cref{lem:cvx-uniq} implies that $q^i$ equals $\phi^{-1}(v^i)$ up to rescaling by some non-negative real number.  
 Since $\|q\|_1=1$, we also see that $q$ is not identically zero.

\end{proof}

\subsection{Tangent families to subvarieties}
To be able to use \Cref{prop:tangent}, we will need to produce families satisfying its hypotheses.  The next result allows us to do that for any subvariety.  

\begin{lemma}
\label{lem:hol-family}
Let $N\subseteq \calM_{g,n}$ be an algebraic subvariety, $\mathring{\partial} N$ be an irreducible component of $\partial N \cap \Delta_\Gamma$.  Let $X\in \mathring{\partial} N$ be a generic point. Then there exists a holomorphic family $f:\Delta\to \overline{N}$ with $f(0)=X$ satisfying the following property:
for any $v\in T_X(\mathring{\partial} N)$ that is not identically zero on every component,  there exists a lift $\tilde{f}:\Delta\to T\overline{N}$ of $f$ %
 with $\tilde{f}(0)= (X,v)$ and satisfying the assumptions $\eqref{item:open}-\eqref{item:tranverse-decay}$ in \Cref{prop:tangent}.

\end{lemma}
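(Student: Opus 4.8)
The plan is to pass to a log resolution of $\overline{N}$, where the singularities of $\overline{N}$ and the vanishing orders of the plumbing parameters along the boundary both become controllable, to build $f$ from a generic holomorphic disk in the resolution, and to obtain each lift $\tilde f$ from a logarithmic vector field. First I would reduce to $N$ irreducible: if $N=N_1\cup\cdots\cup N_r$, then $\mathring{\partial} N$, being irreducible, is a component of $\partial N_i\cap\Delta_\Gamma$ for some $i$ (an irreducible component of a finite union of closed sets is a component of one of them), so we may replace $N$ by $N_i$. Let $\nu:\widehat{N}\to\overline{N}$ be a log resolution: $\widehat N$ smooth, $\nu$ proper and birational, and $E:=\nu^{-1}(\overline N\cap\partial\Mgn)$ a simple normal crossings divisor (orbifold subtleties at the boundary of $\Mgn$ are harmless and can be dealt with by first passing to a finite manifold cover). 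Note $\widehat N\setminus E=\nu^{-1}(N)$, and that the non-isomorphism locus of $\nu$ is a proper closed subset. Choose an irreducible component $\widehat W$ of $\nu^{-1}(\mathring{\partial} N)$ that dominates $\mathring{\partial} N$, pick a generic point $\widehat X\in\widehat W$, and set $X:=\nu(\widehat X)$, a generic point of $\mathring{\partial} N$. Since $\mathring{\partial} N\subseteq\Delta_\Gamma\subseteq\partial\Mgn$ we have $\widehat W\subseteq E$; and by genericity of $\widehat X$, every irreducible component $D$ of $E$ through $\widehat X$ in fact contains $\widehat W$ (as $D\cap\widehat W$ is closed in $\widehat W$ and contains its generic point).

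Next I would construct $f$. In local coordinates near $\widehat X$ with $\widehat X$ at the origin, write $E=\{x_1\cdots x_r=0\}$. Take $\widehat f(z):=z\,w$ for a generic vector $w$: we require $dx_k(w)\neq 0$ for all $k\le r$, so that $\widehat f$ meets $E$ only at $z=0$, and in addition that $\widehat f(\Delta^*)$ avoid the non-isomorphism locus of $\nu$ and the set $\overline{\nu^{-1}(N^{\mathrm{sing}})}$, both of which are proper closed, so this holds for generic $w$ after shrinking the disk. Put $f:=\nu\circ\widehat f:\Delta\to\overline N$; it depends only on $X$. Then $f(0)=X$, and for $z\neq 0$ we have $\widehat f(z)\in\widehat N\setminus E=\nu^{-1}(N)$ with $\nu$ a local isomorphism at $\widehat f(z)$, so $f(z)\in N^{\mathrm{reg}}$.

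Now fix $v\in T_X(\mathring{\partial} N)$ not identically zero on every component. The morphism $\widehat W\to\mathring{\partial} N$ is dominant between varieties in characteristic $0$, so generic smoothness (after shrinking the dense open around $X$) yields $\widehat v\in T_{\widehat X}\widehat W$, viewed in $T_{\widehat X}\widehat N$, with $d\nu(\widehat v)=v$. Because $\widehat W\subseteq D$ for every component $D$ of $E$ through $\widehat X$, the vector $\widehat v$ is tangent to all of them, i.e.\ $dx_k(\widehat v)=0$ for every $x_k$ cutting out a component of $E$; hence $\widehat v$ is the value at $\widehat X$ of a logarithmic vector field $\widehat V$ for $E$, defined near $\widehat X$ (explicitly, a suitable constant-coefficient combination of the $\partial_{x_k}$ transverse to $E$). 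Set $v(z):=d\nu\big(\widehat V(\widehat f(z))\big)\in T_{f(z)}\Mgn$ and $\tilde f(z):=(f(z),v(z))$; this is a holomorphic family lifting $f$, with $\tilde f(0)=(X,d\nu(\widehat v))=(X,v)$, and, since $\nu$ is a local isomorphism along $\widehat f(\Delta^*)$, with $\tilde f(z)\in T_{f(z)}N$ for $z\neq 0$.

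Finally I would verify conditions \ref{item:open}--\ref{item:tranverse-decay} of \Cref{prop:tangent}. Conditions \ref{item:open} and \ref{item:bdy} have just been observed (for \ref{item:bdy} note $X\in\Delta_\Gamma$ and $v(0)=v$ is not identically zero on every component by hypothesis), and \ref{item:tranverse-decay} is the crux. Expressing $v(z)$ in the plumbing frame, $d_j(z)=dt_j(v(z))=d(t_j\circ\nu)\big(\widehat V(\widehat f(z))\big)=\big(\widehat V(t_j\circ\nu)\big)(\widehat f(z))$. Near $\widehat X$ the zero divisor of $t_j\circ\nu$ is supported on the components of $E$ through $\widehat X$ (since $\{t_j=0\}\cap\overline N\subseteq\partial N$), so $t_j\circ\nu=u\cdot x_1^{m_{j1}}\cdots x_r^{m_{jr}}$ for a unit $u$ and integers $m_{jk}\ge 0$; as $\widehat V$ is logarithmic, $\widehat V(x_k)\in(x_k)$ for each $k$, and the product rule gives $\widehat V(t_j\circ\nu)=g_j\cdot(t_j\circ\nu)$ for some holomorphic $g_j$. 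Therefore $d_j(z)=g_j(\widehat f(z))\,t_j(f(z))=O(t_j(z))$, as required. The one genuine obstacle is structural rather than computational: $\overline N$ may be badly singular along $\mathring{\partial} N$, and $t_j|_{\overline N}$ may vanish to high order there, so the argument cannot be run on $\overline N$ itself — the log resolution is precisely what simultaneously smooths the space, regularizes the boundary to normal crossings, and produces the logarithmic vector fields that enforce the $O(t_j)$ decay.
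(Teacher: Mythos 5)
Your proof is correct and follows essentially the same route as the paper: resolve the pair so the boundary becomes normal crossings, lift $v$ to a vector tangent to the divisor at a generic point of a dominating component, run a disk transverse to the divisor, and deduce condition \ref{item:tranverse-decay} from the fact that each $t_j$ pulls back to a unit times a monomial in the local equations of the divisor, so differentiating along directions tangent to the divisor gives $O(t_j)$. The only difference is cosmetic: you keep all branches of $E$ through $\widehat X$ and phrase the estimate via logarithmic vector fields, whereas the paper reduces to a single branch $\{x_0=0\}$ at a generic point and computes the Jacobian directly.
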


\begin{proof}
Consider the pair $(\overline{N},\mathring{\partial} N)$ and choose a  resolution \[
r:(Z,D)\to (\overline{N},\mathring{\partial} N)
\]such that $Z$ is smooth, $D$ is a normal crossing divisor.
Choose an open subset $U$ of the regular (non-singular) locus $(\mathring{\partial} N)^{reg}$ such that $U':=r^{-1}(U)\to U$ is smooth. In particular given $v\in T\mathring{\partial} N|_U = TU$ there exists a lift $(X',v')\in TU'$ with $r_*(X',v')=v.$
Choose local coordinates 
 $x=(x_0,x_1,\ldots,x_k)$  on $Z$ such that in a neighborhood  of a point in $U'$ we have
\[
D= \{x_0 =0 \}, \quad v' = \sum_{i=1}^k c_i\partial_{x_i}.
\]

Given a point $ x'=(x_1,\ldots, x_n)\in U'$ consider the path 
\[
f(z) = (z,x_1,\ldots,x_n).
\]
For generic $x'\in U'$, the path $f$ lies generically in the preimage of the regular part of $N$.

Let
\begin{align*}
 h&:\Delta \to TZ, \\
 z& \mapsto \left(\alpha(z)\sum_{i=1}^k c_i\partial_{x_i}\right).
 \end{align*}
The image of $h$ is generically contained in  \[
  TZ|_{r^{-1}(N^{reg})}.
 \]

 We now post-compose $h$ with the derivative map
 \[
 r_*:TZ\to T\calM_{g,n},
 \]

go get $\tilde{f}:=r_*\circ h:\Delta \to T\calM_{g,n}$.

Properties \eqref{item:open} and \eqref{item:bdy} are satisfied by construction but \eqref{item:tranverse-decay} needs justification.
The divisor $\{t_i(x) =0 \}$ is a multiple of $\{x_0=0\}$ and hence we can write
$t_j(x) = x_0^kh_j(x)$ for some $k$ and $h_i$ an analytic function with $h_i(0)\neq 0$.  In particular
\[
\dfrac{\partial t_j}{\partial x_i} = x_0^k \dfrac{\partial h_j}{\partial x_i} = O(t_j) \text{ if $i\neq 0$}.
\]
We start by computing $r_*(x,v')$. Recall that in local coordinates the pushforward is represented by the Jacobian matrix $(\tfrac{\partial s}{\partial x},\tfrac{\partial t}{\partial x})^t$.
We have 
\[
r_{*}(x,v') = \sum_{i=1}^m\tilde{c}_i(x)\partial s_i + \sum_{j=1}^n \left(\sum_{i=1}^k \dfrac{\partial{t_j} }{\partial x_i} c_i\right)\partial t_j
=
 \sum_{i=1}^m\tilde{c}_i(x)\partial s_i + \sum_{j=1}^n O(t_j)\partial t_j.
\]
Here $\tilde{c}_i$ are holomorphic functions with $\tilde{c}_i(0,x_1,\ldots,x_n)= c_i.$
Thus $d_j(x) = O(t_j)$. The same is true after composing with $h$.

\end{proof}

\section{Boundary is $GL^+(2,\R)$-geodesic}
\label{sec:bdy-gl2r-geod}

The main goal of this section is \Cref{prop:bdy-gl2r-geod}, which gives that $\mathring{\partial} N$ is $GL^+(2,\R)$-geodesic (defined below).  This means that there is a large dimension set of quadratic differentials generating Teichm\"uller geodesics lying in $\mathring{\partial}N$.  

We will first discuss several general notions concerning a multi-component moduli space $\M$ and a multi-component stratum $\calQ(\mu)$ over $\M$.  We denote the projection forgetting the differential by $p:\calQ(\mu) \to \M$.  

\subsection{$GL^+(2,\R)$-action} We define a $GL^+(2,\R)$ action on any $\calQ(\mu)$ by taking the usual $GL^+(2,\R)$ action (defined in \Cref{sec:teich-geo}) on components where the differential is not identically zero, and the identity on identically zero factors.  Though this gives an action on the whole quadratic Hodge bundle $Q\overline{\M_{g,n}}$, it is \emph{not} continuous; the issue is with differentials converging to zero on some component of the boundary.  Nevertheless, this action will be useful to us.  

\subsection{$GL^+(2,\R)$-geodesic}
\label{sec:gl2r-geod}
The below definition captures having many quadratic differentials whose $GL^+(2,\R)$ orbits lie in our subvariety.   

\begin{defn}
\label{defn:gl2r-geod}
Let $N\subseteq \calM$ be an irreducible, analytic subvariety.  Suppose that there exists a stratum $\calQ(\mu)\to \M$, $\mu=(\kappa_1,\ldots,\kappa_m)$, and an irreducible algebraic subvariety $Q\subseteq \calQ(\mu)$, the \emph{witness}, such that:
\begin{enumerate}[(i)]
 \item \label{item:gl2-inv} $Q$ is $GL^+(2,\R)$-invariant,
\item \label{item:proj} $\overline{p(Q)} =  N$, %
\item \label{item:dimQ} $\dim Q \ge 2\dim N$, %
\item \label{item:non-vary} $Q$ does not vary on any component $i$ for which $\kappa_i=\infty$.
\end{enumerate}
Then $N$ is called {\em $\GL^+(2,\RR)$-geodesic}.

\end{defn}

\subsection{The space $QN$}

The connection between totally geodesic subvarieties $N$ in a single moduli space and $GL^+(2,\R)$-invariant subvarieties comes from the fact that Teichm\"uller geodesics are generated by quadratic differentials.  Hence the set of quadratic differentials generating Teichm\"uller geodesics lying in $N$ will be a large and interesting set when $N$ is totally geodesic.  

For the remainder of this section, $N\subset \M_{g,n}$ will be a totally geodesic subvariety, $\Delta_\Gamma$ a boundary stratum of $\barMgn$, and $\mathring{\partial} N$ an irreducible component of $\partial N \cap \Delta_\Gamma$.

\begin{defn}
    Define 
    \[
    QN: = \{q\in \calQ\M_{g,n}: p(g_t q) \in N \text{ for all } t\in \R\}. 
    \]
\end{defn}

We will denote by $\partial QN$ the boundary of $QN$ in the Hodge bundle $\calQ \Mgn$.

\begin{prop}
\label{prop:QN-var}
    The set $QN$ is a $GL^+(2,\R)$-invariant subvariety of $\calQ\M_{g,n}$.  
\end{prop}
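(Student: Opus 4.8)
The plan is to verify the two defining properties of $QN$ separately: that it is $GL^+(2,\R)$-invariant, and that it is an algebraic subvariety. For the invariance, the key observation is that the $g_t$-action on $\calQ\M_{g,n}$ generates Teichm\"uller geodesics: for $q \in \calQ\M_{g,n}$, the path $t \mapsto p(g_t q)$ is (a reparametrization of) the Teichm\"uller geodesic generated by $q$. Hence the condition ``$p(g_t q) \in N$ for all $t$'' says exactly that the full Teichm\"uller geodesic through $p(q)$ in the direction determined by $q$ lies in $N$. Now if $q \in QN$ and $h \in GL^+(2,\R)$, I want to show $hq \in QN$, i.e. $p(g_t hq) \in N$ for all $t$. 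The differential $hq$ generates a Teichm\"uller geodesic whose image in $\M_{g,n}$ is the \emph{same subset} as the image of the geodesic generated by $q$ --- this is the standard fact that $GL^+(2,\R)$ acts on the bundle $\calQ\M_{g,n}$ and the $SL(2,\R)$-orbit of $q$ projects to a Teichm\"uller disk, within which all the $g_t$-orbits project to the same geodesic up to reparametrization (more precisely, the projection to $\M_{g,n}$ of $g_t h q$ for $t \in \R$, as $h$ ranges over $SO(2)$, sweeps out the complex geodesic through $p(q)$; for fixed $h$ it traces a Teichm\"uller geodesic in that disk). Since $N$ is totally geodesic, and a Teichm\"uller disk is a union of Teichm\"uller geodesics through its center, the fact that one geodesic through $p(q)$ lies in $N$ forces (via the totally geodesic hypothesis applied along the disk) that the whole disk, hence every such geodesic, lies in $N$. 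Thus $p(g_t hq) \in N$ for all $t$, giving $hq \in QN$.

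Let me be more careful about the last point, since it is really the crux. Write $h = h' \cdot a_s$ with $h' \in SO(2)$ and $a_s$ diagonal; since $g_t a_s = g_{t+s}$ up to scaling it suffices to treat $h \in SO(2)$. The Teichm\"uller disk $G_\C(p(q), q): \Delta \to \M_{g,n}$ has the property that each radial geodesic $\theta \mapsto G_\C(p(q),q)(re^{i\theta})$, $r \in [0,1)$, is a Teichm\"uller geodesic ray, and $q \in QN$ says precisely that the ray at angle $\theta = 0$ (together with its reverse) lies in $N$. The differential $h q$ for $h = r_\theta$ generates the geodesic which is the radial line at angle $2\theta$ (the factor of $2$ because quadratic differentials have a $\pm$ symmetry, but the precise constant does not matter). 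I claim every radial geodesic lies in $N$: indeed pick two points $X, Y$ on the $\theta=0$ geodesic; by the totally geodesic property every geodesic through two points of $N$ lies in $N$, and more to the point, the image of the disk is contained in $N$ because $N$ is closed and totally geodesic (one can see this by a limiting/convexity argument, or directly: the disk is the set of $X_{\phi(zq)}$, all of which are reached by Teichm\"uller geodesics emanating from $p(q)$, and the union of all geodesics from a point of a totally geodesic $N$ through a second point of $N$ covers the disk as the second point varies along the $\theta = 0$ geodesic). So $p(g_t hq) \in N$ for all $t$ and all $h \in GL^+(2,\R)$, establishing (i).

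For the algebraicity, the cleanest route is to observe that $QN$ is closed: if $q_k \to q$ in $\calQ\M_{g,n}$ with $q_k \in QN$, then for each fixed $t$, $g_t q_k \to g_t q$ (the $GL^+(2,\R)$-action is continuous on $\calQ\M_{g,n} \setminus\{0\}$, away from the boundary), so $p(g_t q_k) \to p(g_t q)$; since each $p(g_t q_k) \in N$ and $N$ is closed, $p(g_t q) \in N$, so $q \in QN$. To upgrade ``closed analytic'' to ``algebraic subvariety'', I would argue that $QN$ is $GL^+(2,\R)$-invariant and its projection to $\M_{g,n}$ is contained in the algebraic variety $N$, so $QN$ is a closed analytic subset of the algebraic variety $p^{-1}(N) \subset \calQ\M_{g,n}$; then I invoke that a closed analytic subset which is additionally invariant under a large algebraic group action (or: cut out by the algebraic conditions defining $N$ pulled back along the algebraic maps $q \mapsto p(g_t q)$ for a Zariski-dense set of $t$, or via Chow-type arguments) is algebraic. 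Concretely: $QN = \bigcap_{t \in \R}(p \circ g_t)^{-1}(N)$, and since each $p \circ g_t: \calQ\M_{g,n} \to \M_{g,n}$ is a morphism of varieties and $N$ is a subvariety, each $(p\circ g_t)^{-1}(N)$ is a subvariety; an infinite intersection of subvarieties is a subvariety (Noetherianity), so $QN$ is algebraic. Finally, irreducibility, if needed, is not asserted in the proposition statement, so I would not address it here. The main obstacle I anticipate is making rigorous the step that the totally geodesic property of $N$ forces the \emph{entire} Teichm\"uller disk (not just individual geodesics) into $N$ --- this requires care because ``totally geodesic'' is defined via geodesics, not disks, and one must genuinely use that a disk is swept out by geodesics through pairs of points of $N$.
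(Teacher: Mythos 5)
There are two genuine gaps. First, your argument for $r_\theta$-invariance does not work as sketched. You claim the Teichm\"uller disk through $p(q)$ is swept out by geodesics through pairs of points of $N$ lying on the $\theta=0$ geodesic; but by Teichm\"uller uniqueness, the only geodesic through two points of that central geodesic is the central geodesic itself, so varying the second point along it recovers nothing beyond the $\theta=0$ geodesic. Indeed the conclusion cannot follow from the totally geodesic property alone: the image of a single Teichm\"uller geodesic is a totally geodesic subset whose Teichm\"uller disk does not lie in it. The paper's route uses, essentially, that $N$ is a \emph{complex} subvariety: the tangent vector $v=\phi(q)$ at $p(q)$ lies in $T N$, and rotating $q$ by $r_\theta$ rotates $v$ by a unit complex scalar, which stays in the complex subspace $T_{p(q)}N$ (at a smooth point); then \Cref{prop:global-tangent} (totally geodesic implies infinitesimally totally geodesic) puts the rotated geodesic inside $N$ for $q$ over generic points, and one concludes for all $q\in QN$ by density of such $q$ (using $p(QN)=N$) and closedness of $QN$. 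Your proposal never invokes the complex structure of $N$ nor the infinitesimal criterion, and without them the crux step fails.

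Second, the algebraicity argument is not salvageable in the form you give. The map $q\mapsto g_tq$ is the Teichm\"uller geodesic flow; it is not holomorphic, let alone a morphism of algebraic varieties, so $(p\circ g_t)^{-1}(N)$ is not a subvariety and the Noetherian-intersection argument collapses. Your elementary closedness argument is fine (and matches the paper), but upgrading ``closed and $GL^+(2,\R)$-invariant'' to ``algebraic subvariety'' is a deep fact: the paper cites the Eskin--Mirzakhani--Mohammadi structure theory together with Filip's theorem that $GL^+(2,\R)$-invariant closed loci are algebraic. Some appeal to these results (or an equivalent) is unavoidable here; no Chow-type or pullback argument along the non-algebraic flow $g_t$ can replace it.
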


\begin{proof}
    The set is closed, since $N$ is. It is manifestly $g_t$-invariant, for any $t\in \R$.  Any $q\in QN$ generates a Teichm\"uller geodesic lying in $N$, and in particular its tangent vector $v$ at $p(q)$ lies in $TN$.  Now for any $\theta \in S^1$, $r_\theta q$ generates a Teichm\"uller geodesic $\gamma$ which is tangent to $r_\theta v$, and this vector also lies in $TN$ since $N$ is a complex variety.  By \Cref{prop:global-tangent}, $N$ is infinitesimally totally geodesic, so if $q$ lies over a generic point of $N$, then $\gamma$ is in fact contained in $N$, so $r_\theta q \in QN$.  Since $N$ is totally geodesic, $p(QN)=N$, and so such generic $q$ are certainly dense in $QN$.  By continuity, it follows that $r_\theta q\in QN$ for all $q\in QN$ and $\theta \in S^1$.  Since $g_t,r_\theta$ (together with real scaling) generate $GL^+(2,\R)$, we get that $QN$ is $GL^+(2,\R)$-invariant.  

    Finally we can apply \cite{emm15} and \cite{filip16} to conclude that $QN$ is a subvariety.  

\end{proof}

\subsection{Limits of quadratic differentials in $QN$}
We would like to show that for any $v\in T\partial N$, the associated quadratic differential $\phi^{-1}(v)$ lies in the boundary $\partial QN$.  We are unable to directly show this, but instead we prove \Cref{prop:qd-seq}, which produces an element of $\partial QN$ that agrees with $\phi^{-1}(v)$ up to certain rescalings by positive reals.  To this end, we use the results of \Cref{sec:tang-quad} to prove several preliminary lemmas, which give elements of $\partial QN$ related to $\phi^{-1}(v)$, though only on certain components.  

Recall the $\top(v)$ notation from \Cref{defn:top-level}.

\begin{lemma}
    \label{lem:conv-top}
    Let $X\in \mathring{\partial} N$ be a generic point. Then there exists a family $\{X(z)\}_{z\in \Delta}$ of elements of $N$ with the following property.  For any non-zero $v\in T_X\mathring{\partial} N$ and sequence $X_n:=X(z_n)$ with $z_n\to 0$, there exists $q_n$ quadratic differentials on these $X_n$ that converge to some $q\in \calQ_{<\infty}\Mgn$, not identically zero, along a subsequence, where 
    \begin{enumerate}[(i)]
        \item $q_n\in QN$, and 
        \item $q$ is a quadratic differential that agrees with $\phi^{-1}(\top(v))$ up to rescaling each factor $i$ by some non-negative real number $c_i$.
    \end{enumerate}  
\end{lemma}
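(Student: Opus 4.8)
The plan is to combine \Cref{lem:hol-family}, which produces a suitable holomorphic family of tangent vectors, with \Cref{prop:tangent}, which extracts the limiting quadratic differential, and then to verify that the differentials stay inside $QN$ in the limit. First I would apply \Cref{lem:hol-family} to the subvariety $N\subseteq\M_{g,n}$ and the component $\mathring\partial N$ of $\partial N\cap\Delta_\Gamma$: for a generic $X\in\mathring\partial N$ this yields a holomorphic family $f\colon\Delta\to\overline N$ with $f(0)=X$ such that, for any $v\in T_X(\mathring\partial N)$ not identically zero on every component, there is a lift $\tilde f\colon\Delta\to T\overline N$ with $\tilde f(0)=(X,v)$ satisfying hypotheses \eqref{item:open}--\eqref{item:tranverse-decay} of \Cref{prop:tangent}. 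Write $\tilde f(z)=(X(z),v(z))$; this defines the family $\{X(z)\}_{z\in\Delta}$ claimed in the statement. The point $X$ may be taken generic enough in $\mathring\partial N$ that it is also a generic point of $\partial N$ in the sense needed below, and so that the family $f$ lies generically over the regular part of $N$.

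Next I would feed this family into \Cref{prop:tangent}. For $z\in\Delta^*$ set $q(z):=\phi^{-1}(v(z))/\|\phi^{-1}(v(z))\|$. Since $\tilde f$ satisfies the hypotheses of \Cref{prop:tangent}, any sequence $q(z_n)$ with $z_n\to0$ has an accumulation point $q\in\calQ_{<\infty}\Mgn$ which is not identically zero and which agrees with $\phi^{-1}(\top(v))$ up to rescaling each factor $i$ by a non-negative real $c_i$. Passing to a subsequence, assume $q_n:=q(z_n)\to q$. This already gives conclusion (ii), and the non-vanishing, directly from \Cref{prop:tangent}. It remains only to check that we may choose the $q_n$ (equivalently, that the $q(z_n)$ above) lie in $QN$, i.e. that $p(g_t q_n)\in N$ for all $t\in\R$.

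For conclusion (i), the key observation is that $q_n$ is, up to scaling, the quadratic differential $\phi^{-1}(v(z_n))$ generating the \TM geodesic through $X(z_n)=X_n\in N$ in the direction $v(z_n)$. Because $\tilde f$ is a lift of $f$ into $T\overline N$, the vector $v(z_n)$ is tangent to $N$ at $X_n$; since $X$ was chosen generic, for generic $z_n$ the point $X_n$ lies in $N^{reg}$ and $v(z_n)\in T_{X_n}N$. By \Cref{prop:global-tangent}, $N$ is infinitesimally totally geodesic, so the \TM geodesic tangent to $v(z_n)$ lies entirely in $N$; equivalently $p(g_t q_n)\in N$ for all $t$, i.e. $q_n\in QN$. (The scaling between $q_n$ and $\phi^{-1}(v(z_n))$ is irrelevant here, as $QN$ is scaling-invariant, being $GL^+(2,\R)$-invariant by \Cref{prop:QN-var}.) One must take a small amount of care that the subsequence along which $q_n\to q$ can be taken to consist of generic $z_n$; this is fine because the non-generic locus is a proper analytic subset of $\Delta$, hence does not accumulate at $0$ unless it contains a neighborhood of $0$, and the latter is excluded by the genericity in \Cref{lem:hol-family}. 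The main obstacle is really this bookkeeping — ensuring the genericity hypotheses of \Cref{lem:hol-family} and \Cref{prop:global-tangent} are simultaneously available along the chosen sequence — since the analytic heavy lifting is entirely contained in \Cref{prop:tangent} and \Cref{lem:hol-family}, which we are free to invoke.
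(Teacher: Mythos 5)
Your proposal is correct and follows essentially the same route as the paper: invoke \Cref{lem:hol-family} to produce the family and its tangent lift, apply \Cref{prop:tangent} to the normalized differentials $q(z)=\phi^{-1}(v(z))/\|\phi^{-1}(v(z))\|$ to obtain the limit $q$ satisfying (ii), and use \Cref{prop:global-tangent} (infinitesimally totally geodesic) to conclude $q_n\in QN$. The extra bookkeeping you include about choosing the $z_n$ generic is a reasonable elaboration of a point the paper's proof passes over silently, but it does not change the argument.
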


\begin{proof}
    By \Cref{lem:hol-family}, there exists a holomorphic family of surfaces $\{X(z)\}_{z\in \Delta}$ such that for any $v\in T_X\mathring{\partial} N$, there exists lift of this family to tangent vectors $\{(X(z),v(z))\}_{z\in \Delta}\in T\overline N$ with $v(0)=v$, and satisfying assumptions \eqref{item:open}-\eqref{item:tranverse-decay} of \Cref{prop:tangent}.  %
    Applying \Cref{prop:tangent} to this then gives a sequence $\{v_n\}$ such that $q_n:=\phi^{-1}(v_n)/\|\phi^{-1}(v_n)\|$ satisfy condition (ii). %
    Since $N$ is totally geodesic, by \Cref{prop:global-tangent} it is infinitesimally totally geodesic, so $q_n\in QN$ for all $n$.  Thus condition (i) holds as well.  %
\end{proof}

\paragraph{Balanced vectors in the boundary.}

A tangent vector $v\in T_XN$ is called \emph{balanced} if $\| v^i\| = \|v^j \|$ for all $i,j$ with $v^i\neq 0,v^j\neq 0$.
Given $v\in T_XN$ we define the \emph{balanced vector} $b(v)$ by 
\begin{align*}
    b(v)^i = 
    \begin{cases}
        \frac{v^i}{\|v^i\|} &\text{\ \ if \ } v^i\neq 0 \\
        0 &\text{\ \ otherwise.} 
    \end{cases}
\end{align*}

\begin{lemma}
    \label{lem:levelwise-indep} 
     If $q\in \partial QN$ then $b(\phi(q))\in T\partial N$.  
\end{lemma}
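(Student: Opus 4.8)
The statement asserts that if $q$ lies in the boundary $\partial QN$ of the invariant subvariety $QN$ inside the Hodge bundle, then the balanced version $b(\phi(q))$ of the tangent vector $\phi(q)$ is a tangent vector to $\partial N$. My plan is to realize $q$ as a limit of differentials in $QN$ lying over interior points, use the map $\phi$ to convert these into tangent vectors to $N$, and then show that the limiting tangent direction must be tangent to $\partial N$, up to the rescalings that $b(\cdot)$ is designed to absorb.

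First I would pick a sequence $q_n \in QN$ lying over points $X_n \in N$ with $q_n \to q$ over $X = p(q) \in \mathring{\partial} N$ (possible since $q \in \partial QN$ and $QN$ is a subvariety by \Cref{prop:QN-var}). I may pass to a subsequence so that $X_n$ converges to $X$ in $\overline{\M}_{g,n}$. Set $v_n := \phi(q_n) \in T_{X_n} N$; note $v_n$ is a tangent vector to $N$ since $q_n \in QN$ generates a Teichm\"uller geodesic lying in $N$, whose tangent at $X_n$ is $\phi(q_n)$ (this is exactly how $\phi$ is defined, modulo normalization). The main point to extract is the convergence of the $v_n$: because each $\phi_{X_n}$ is norm-preserving on each factor and $q_n \to q$, the vectors $v_n$ converge, in the sense of a degenerating family of tangent vectors, to a vector supported precisely on the non-zero components of $q$ — this is governed by the same analysis as in \Cref{lem:homeo} and \Cref{prop:tangent}. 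The subtlety is that the limiting object is only a tangent vector to the \emph{boundary} multi-component moduli space $\Delta_\Gamma$ containing $X$, and its component on factor $i$ has norm $\|q^i\|$, so after dividing each non-zero component by its norm we recover exactly $b(\phi(q))$.

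The heart of the argument is then showing this limiting vector is tangent to $\partial N$, i.e.\ tangent to $\mathring{\partial} N$ (or some component of $\partial N \cap \Delta_\Gamma$). Since $N$ is totally geodesic, hence infinitesimally totally geodesic by \Cref{prop:global-tangent}, and since $v_n \in T_{X_n} N$ with $X_n \to X \in \partial N$, the limit of $v_n$ (suitably interpreted) lies in the closure $T\overline N$, and its projection to the boundary stratum lies in $T\partial N$; this is the content one gets by combining that $T\overline N$ is closed with the fact that the degenerate limit of interior tangent vectors to $N$, when those vectors stay bounded in Teichm\"uller norm on each component (which holds here since $\|v_n^i\| = \|q_n^i\| \to \|q^i\|$ is bounded), lands in the tangent space to the boundary locus $\mathring{\partial} N$. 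I would phrase this carefully: apply \Cref{lem:converg} / \Cref{lem:homeo}-style continuity to get that the complex geodesics $G_\C(X_n, q_n)$ converge locally uniformly to $G_\C(X, q)$; since each $G_\C(X_n,q_n)$ maps into $N$ (as $q_n \in QN$), the limit $G_\C(X,q)$ maps into $\overline N$, and its derivative at $0$ — which on non-zero components is $\overline{q^i}/|q^i|$, the balanced vector up to scaling — is therefore tangent to $\overline N$, hence (being tangent along $\Delta_\Gamma$) tangent to $\partial N$.

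The main obstacle I anticipate is the bookkeeping around degenerating families of tangent vectors: a vector $v_n \in T_{X_n}N$ as $X_n \to \partial \M_{g,n}$ does not literally ``converge'' to a vector in $T_X \Delta_\Gamma$ in an a priori topology, because nearby the Teichm\"uller metric blows up transverse to the boundary. The correct statement is that after discarding the transverse (plumbing-direction) components — which here vanish in the limit because $q$ is a finite-area differential on $\mathring{\partial} N$ with no support in the pinching directions — the remaining components converge. Making this rigorous will likely require invoking the plumbing-coordinate framework of \Cref{sec:tang-quad} and the fact that $q \in \calQ_{<\infty}\Mgn$ has zero $2$-residues at the nodes, so the complex geodesic $G_\C(X,q)$ genuinely stays inside the boundary stratum. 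Once that is set up, the totally-geodesic hypothesis does the rest via \Cref{prop:global-tangent} and the closedness of $T\overline N$.
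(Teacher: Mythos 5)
The central step of your argument --- that the complex geodesics $G_{\C}(X_n,q_n)$ converge locally uniformly to a ``complex geodesic $G_{\C}(X,q)$'' which flows the non-zero components of $q$ and freezes the components where $q$ vanishes, so that the limit of $v_n=\phi(q_n)$ is supported exactly on the non-zero components of $q$ --- is precisely the continuity statement that fails here. \Cref{lem:converg} applies only within a fixed Teichm\"uller space; at the Hodge-bundle boundary the geodesic/$GL^+(2,\R)$ flow is \emph{not} continuous, and the failure happens exactly on components where the differentials tend to zero (the paper flags this when defining the action at the start of \Cref{sec:bdy-gl2r-geod}). Concretely, on a component where $q^i=0$ the Beltrami directions $\overline{q_n}/|q_n|$ still have unit norm no matter how small $q_n$ is there, so the limiting path moves at unit speed on those components as well, in directions governed by the rescaled lower-level differentials; it is not the path ``flow on $\supp q$, constant elsewhere,'' and the limit of the $v_n$ is not supported on the non-zero components of $q$. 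Your proposed repair via plumbing coordinates and finiteness of area controls the transverse (node-opening) directions, but the problematic motion takes place entirely inside the boundary stratum, so it does not address this.

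This is exactly where the paper needs machinery your proposal omits: after passing to a subsequence converging in the multi-scale space $\Xi\calQ(\mu)$ to a limit $q^{MS}$, it invokes \Cref{thm:quad-level-ind} (level-wise linearity of the boundary of $QN$, the quadratic version of Benirschke's theorem) to conclude that applying $g_t$ to the \emph{top level} of $q^{MS}$ while keeping the lower levels fixed stays in the closure of $QN$; projecting that path to the Hodge bundle gives a path in $\partial QN$ whose tangent vector is exactly $b(\phi(q))$. Without this input there is no way to ``freeze'' the zero components of $q$. Note also that even if you produced some limit vector $w$ of the $v_n$ lying in $T\partial N$, you could not then pass to $b(\phi(q))$ by dividing each component by its norm: $T\partial N$ is a linear subspace and is not invariant under componentwise rescaling, so the balancing must emerge from the construction of the path (unit-speed flow on each non-zero component), not be imposed afterwards. (A smaller gap: areas need not converge along Hodge-bundle limits, so assertions like $\|q_n^i\|\to\|q^i\|$ require an argument of the type in \Cref{lem:area-cts}, which is proved only for holomorphic families.)
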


 \begin{proof}
      Choose $q_k \in QN$ with $\lim_{k\to\infty} q_k = q$.  By passing to a subsequence, we can assume that all $q_k$ lie in a single stratum $\calQ(\mu)$, and (passing to a further subsequence) that they converge in the multi-scale space $\Xi \calQ(\mu)$ to some boundary point $q^{MS}$ (see \Cref{foot:subseq}). The top level of $q^{MS}$, after removing unstable components, coincides with the non-zero components of $q$.  Let $Y$ be the closure of $QN$ in the boundary stratum containing $q^{MS}$.  By \Cref{thm:quad-level-ind}, Benirschke's result in our quadratic setting, $Y$ is a level-wise linear subvariety cut out by linear equations in period coordinates with real coefficients.  This implies that, for each $t$, the result of applying $g_t$ to the top level differentials of $q^{MS}$, while not changing the lower levels, remains in $Y$. Under the map from Multi-scale to Hodge, this path projects to a path lying in $\partial QN$.  %
      The tangent vector to this path is precisely $b(\phi(q))$.   So $b(\phi(q))\in T \partial N$.  
 \end{proof}

\paragraph{Decomposition of set of components.} Given a tangent vector $v=(v^1,\ldots,v^k) \in T\partial N$, we specify a decomposition $\mathcal S (v)$ of $\{1,\ldots,k\}$ into disjoint sets $S_1\sqcup \cdots \sqcup S_n$, by the property that for $i\in S_\ell, j\in S_{\ell'}$, we have $\|v^i\| > \|v^j\|$ if $\ell<\ell'$, and $\|v^i\| = \|v^j\|$ if $\ell=\ell'$.  We also stipulate that the $S_i$ are all non-empty, except if $v$ has no zero components, in which case $S_1,\ldots S_{n-1}$ are non-empty while $S_n$ is the empty set.  (That is, each part of the decomposition corresponds to factors of the same norm, the parts are in descending order of norm, and the last part consists of the factors with norm $0$.)

For any $\ell$, we define a vector $L_\ell(v)\in T\partial N$ by 
    \begin{align*}
        L_\ell(v)^i = \begin{cases}
            v^i \text{ \ if } i \in S_\ell \\
            0 \text{ \ otherwise.}  
        \end{cases} 
    \end{align*} 
In other words, $L_\ell(v)$ is the vector given by zeroing out on all components that don't lie in the $\ell$ part.

\begin{lemma}
    \label{lem:top-level-tan} 
     Let $X\in \mathring{\partial} N$ be generic, and $v\in T_X(\mathring{\partial} N)$.  
    Then $\top(v)$ lies in $T_X(\mathring{\partial} N)$.  
\end{lemma}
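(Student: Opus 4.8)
The plan is to peel off the top-level components of $v$ one group at a time, using \Cref{lem:conv-top} together with \Cref{lem:levelwise-indep} at each step, and then to add the groups back together, invoking linearity of the tangent space $T_X(\mathring{\partial} N)$.

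First I would dispose of the trivial case $v=0$, for which $\top(v)=0$. Writing $v=(v^1,\dots,v^k)$, let $S=\{i:\|v^i\|=\|v\|_\infty\}$ be the set of top-level components, so that $\top(v)=v|_S$, where for a subset $A$ of indices $v|_A$ denotes the vector equal to $v^i$ on $i\in A$ and to $0$ elsewhere. Since $T_X(\mathring{\partial} N)$ is a linear space, it suffices to produce a partition $S=T_1\sqcup\cdots\sqcup T_r$ with $v|_{T_j}\in T_X(\mathring{\partial} N)$ for each $j$; then $\top(v)=\sum_{j=1}^r v|_{T_j}\in T_X(\mathring{\partial} N)$.

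I would build the $T_j$ by induction. Suppose disjoint nonempty $T_1,\dots,T_j\subseteq S$ have been found with each $v|_{T_\ell}\in T_X(\mathring{\partial} N)$ and $A_j:=T_1\cup\cdots\cup T_j\subsetneq S$ (the base case is $j=0$, $A_0=\emptyset$). Put $w:=v-v|_{A_j}$; then $w\in T_X(\mathring{\partial} N)$ is nonzero, its set of top-level components is exactly $S\setminus A_j\neq\emptyset$, and $\top(w)=v|_{S\setminus A_j}$. Apply \Cref{lem:conv-top} to the generic point $X$ and the vector $w$: along a subsequence one obtains $q_n\in QN$ lying over surfaces $X_n\to X$ and converging to a differential $q\in\calQ_{<\infty}\Mgn$ that is not identically zero and equals $\phi^{-1}(\top(w))$ up to rescaling factor $i$ by some $c_i\ge 0$. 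Hence $T_{j+1}:=\{i:q^i\neq 0\}$ is a nonempty subset of $S\setminus A_j$, with $c_i>0$ and $q^i=c_i\,\phi^{-1}(v^i)$ for $i\in T_{j+1}$ (using $w^i=v^i$ there). Since the $q_n$ lie over smooth curves while $q$ lies over $X\in\partial\M_{g,n}$, we have $q\in\partial QN$, so \Cref{lem:levelwise-indep} gives $b(\phi(q))\in T\partial N$; moreover the path in $\partial QN$ constructed in the proof of \Cref{lem:levelwise-indep} only modifies the top-level components of the limiting multi-scale differential and leaves its dual graph fixed, so it stays in $\Delta_\Gamma$, giving $b(\phi(q))\in T_X(\partial N\cap\Delta_\Gamma)=T_X(\mathring{\partial} N)$, the last equality because $X$ is a generic, in particular smooth and non-multiple, point of $\mathring{\partial} N$. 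A short computation from the definitions of $\phi$ and $b$, using $c_i>0$ and $\|v^i\|=\|v\|_\infty$ on $S$, yields $\phi(q)^i=c_i v^i$ for $i\in T_{j+1}$ and $0$ otherwise, hence $b(\phi(q))=\|v\|_\infty^{-1}\,v|_{T_{j+1}}$, and therefore $v|_{T_{j+1}}\in T_X(\mathring{\partial} N)$. Setting $A_{j+1}:=A_j\sqcup T_{j+1}$ continues the induction, and since $|A_{j+1}|>|A_j|$ it terminates with some $A_r=S$.

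I expect the main obstacle to be exactly the reason a single application of \Cref{lem:conv-top} does not already close the argument: one application only controls the limit differential $q$ on some a priori unknown nonempty subset of the top-level components of the input vector (the area may concentrate there), and this loss of information is what forces the inductive peeling above instead of a one-shot extraction. A secondary technical point requiring care is verifying that the vector $b(\phi(q))$ produced by \Cref{lem:levelwise-indep} is tangent to the boundary stratum $\Delta_\Gamma$, not merely to $\partial N$, so that at a generic point it is tangent to the irreducible component $\mathring{\partial} N$; this is handled by tracking the underlying stable curve along the $g_t$-deformation of the top level of the multi-scale differential.
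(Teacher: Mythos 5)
Your proof is correct and follows essentially the same strategy as the paper's: peel off top-level components by applying \Cref{lem:conv-top} to obtain a limiting differential $q\in\partial QN$, feed it to \Cref{lem:levelwise-indep} to get a tangent vector supported on the nonzero components of $q$, and then use linearity and induction to assemble $\top(v)$. The key tools and the resulting vector $v|_{T_{j+1}}$ are the same as the paper's $w$.

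The one noteworthy difference is the inductive bookkeeping. The paper inducts on the number of nonzero components of $v$, setting $w:=c\cdot b(\phi(q))$ and appealing to the inductive hypothesis on $v-w$, whose top level in the corner case $T=S$ (all top-level indices captured at once) drops to the second level of $v$; there $w+\top(v-w)$ no longer equals $\top(v)$, and the conclusion holds only because $w$ already equals $\top(v)$ in that case — a subtlety the paper does not spell out. Your version sidesteps this entirely by inducting on the set $A_j\subseteq S$ of already-captured top-level indices, always passing the vector $v-v|_{A_j}$ (whose lower levels remain those of $v$ and hence never get promoted) to \Cref{lem:conv-top}, and combining only restrictions $v|_{T_j}$ with $T_j\subseteq S$. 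This is cleaner and avoids the corner case. Otherwise the two proofs are the same argument.
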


\begin{proof}
    The proof we give is similar to that of \Cref{claim:qd-bdy-top} below.  This follows the same pattern as that, but this setting is somewhat simpler since here the statement does not involve quadratic differentials (they are still used in this proof).  
    
    We argue by induction on the number $\eta$ of non-zero components of $v$.  The base case is when $\eta=0$, i.e. $v=0$, for which the statement is trivial.  
    
    For the inductive step, we assume that we have proved the result when the number of non-zero components is $1,\ldots,\eta-1$, for some $\eta\ge 1$, and that $v$ is a vector with exactly $\eta$ components that are non-zero.  We begin by applying \Cref{lem:conv-top}, giving a non-zero $q\in \partial QN$ that agrees with $\phi^{-1}(\top (v))$ rescaled by a non-negative real number $c_i$ on the $i$th component.  

    We then apply \Cref{lem:levelwise-indep} with this $q$, giving that $b(\phi(q))\in T\partial N$.  By the relationship between $q$ and $\phi^{-1}(\top (v))$ discussed above, we can find a suitable scalar multiple $w:=c \cdot b(\phi(q)) \ne 0$ that is zero on all components for which it does not agree with $v$. Since this tangent vector $w$ comes from tangent vectors to the components of $X$, we have $w\in T(\Delta_\Gamma)$, and by using the genericity assumption on $X$, we can ensure that $w\in T_X(\mathring{\partial} N)$ (we can exclude $X$ in the locus where $\mathring{\partial} N$ intersects other irreducible components of $\Delta_\Gamma \cap \partial N$).  
      
    Now by linearity of $T_X(\mathring{\partial} N)$, it follows that $v-w\in T_X(\mathring{\partial} N)$.  Now $v-w$ has strictly fewer components that are non-zero than $v$ does, i.e. fewer than $\eta$,
    so by the inductive hypothesis, we get that $\top(v-w)\in T_X(\mathring{\partial} N)$.  Using linearity again, we get that $w+\top(v-w) \in T_X(\mathring{\partial} N)$; this vector is exactly $\top(v)$, so we are done.  
\end{proof}

The following is the main result of this subsection.  

\begin{prop}
\label{prop:qd-seq}
     Let $X\in \mathring{\partial} N$ be generic, and $v\in T_X(\mathring{\partial} N)$.  Then there exists $q\in \partial QN$ 
     such that for each $i$, $q^i=c_i \phi^{-1}(v^i)$ for some $c_i\in \R_{>0}$.  Furthermore, we can take $\sum_{j\in S_\ell} c_j = 1$ for all $\ell$.  
\end{prop}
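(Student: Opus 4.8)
\emph{Proof plan.} The plan is to produce the differential one level at a time and then combine. First, iterating \Cref{lem:top-level-tan} --- applied to $v$, then to $v-\top(v)$, and so on --- gives $L_\ell(v)\in T_X(\mathring{\partial} N)$ for every $\ell$, since $\top(v)=L_1(v)$, $\top(v-L_1(v))=L_2(v)$, etc. So it suffices to show that for each non-zero level $S_\ell$ there is $q_\ell\in\partial QN$ supported exactly on the components in $S_\ell$, with $q_\ell^i=c_i^{(\ell)}\phi^{-1}(v^i)$ for reals $c_i^{(\ell)}>0$ that one may normalize so $\sum_{i\in S_\ell}c_i^{(\ell)}=1$; then, rescaling each $q_\ell$ as needed (using that $\partial QN$ is invariant under $\R_{>0}$-scaling, which follows from the $GL^+(2,\R)$-invariance in \Cref{prop:QN-var}) and superposing over the pairwise disjoint $S_\ell$, one obtains $q$, setting $c_i$ on the zero-level $S_n$ to any positive reals summing to $1$ (there $\phi^{-1}(v^i)=0$, so the choice is immaterial).

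\emph{The single-level statement, by induction on $|S_\ell|$.} Apply \Cref{lem:conv-top} to $L_\ell(v)$ (whose $\top$ is itself): this yields $q^{(1)}\in\partial QN$, not identically zero, with $q^{(1)i}=c_i\phi^{-1}(v^i)$ and $c_i\ge 0$; let $T:=\{i\in S_\ell: c_i>0\}\ne\emptyset$. If $|S_\ell|=1$ then $T=S_\ell$ and, after rescaling, $q_\ell:=q^{(1)}$ works. Otherwise, by \Cref{lem:levelwise-indep} the balanced vector $b(\phi(q^{(1)}))$ lies in $T\partial N$, and by genericity of $X$ (avoiding the loci where $\mathring{\partial} N$ meets the other components of $\partial N\cap\Delta_\Gamma$) it lies in $T_X(\mathring{\partial} N)$; it equals $v^i/\|v^i\|$ on $T$ and vanishes off $T$, so subtracting the appropriate multiple gives a vector in $T_X(\mathring{\partial} N)$ supported on $S_\ell\setminus T$ and still single-level. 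By the inductive hypothesis there is $q'\in\partial QN$ supported on $S_\ell\setminus T$ with positive component-factors proportional to $v$; superposing $q^{(1)}$ and $q'$ and rescaling uniformly produces $q_\ell$.

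\emph{The main obstacle.} The step I expect to be hardest is the superposition used above (both within a level and across levels): given $q_1,q_2\in\partial QN$ over the same $X$ with disjoint component-supports, one needs the differential gotten by gluing them (and extending by zero) to lie in $\partial QN$. This is not formal, since $\partial QN$ is not linear and $g_t$ does not act linearly on differentials. The plan is to argue through the multi-scale compactification: realize $q_1,q_2$ as Hodge-bundle images of multi-scale limits of $QN$-sequences, and use \Cref{thm:quad-level-ind} --- the closure of $QN$ meets each boundary stratum of $\Xi \calQ(\kappa)$ in a level-wise linear subvariety --- to pass to the less-degenerate stratum whose top level carries the combined support, where the top-level slice is a linear subspace of period coordinates in whose closure both pieces lie, hence also their sum. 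An equivalent route, more in line with the strategy of the paper, is to decompose this level-wise linear top-level slice into prime $GL^+(2,\R)$-invariant factors and apply \Cref{prop:ratio-areas-qd}: constancy of area ratios forces every non-trivial prime factor to have full component-support, and the product structure across prime factors is exactly the needed superposition (this also shows that every component with $v^i\ne 0$ is genuinely ``seen'' by $\partial QN$). In either formulation the technical heart is checking that $\overline{QN}$, restricted to the various multi-scale boundary strata involved, degenerates compatibly, so that differentials produced by \Cref{lem:conv-top} in one stratum really sit in the closure of the relevant linear slice in another.
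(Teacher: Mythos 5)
Your double-induction structure (inner induction within a level, outer induction over levels) and your use of \Cref{lem:conv-top}, \Cref{lem:levelwise-indep}, and \Cref{lem:top-level-tan} match the paper's proof, and you have correctly isolated the crux: you need to superpose two boundary differentials with complementary supports and land back in $\partial QN$. But the routes you sketch to resolve this --- via \Cref{thm:quad-level-ind} and linearity of multi-scale slices, or via prime decomposition and \Cref{prop:ratio-areas-qd} --- are not worked out, and you flag yourself that the ``technical heart'' remains. This leaves a genuine gap, because $\partial QN$ is indeed not closed under such superposition for formal reasons.

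The paper avoids this entirely by doing the addition \emph{before} taking limits, at the level of approximating sequences inside $QN$ itself. The key is that $QN$ (not its boundary) is closed under addition of differentials over a common surface, by \cite[Proposition 2.1]{benirschke24}. So the paper's Claims (\Cref{claim:qd-bdy-top}, \Cref{claim:qd-bdy}) are phrased so that one is handed an arbitrary sequence $X_n$ from the family $\{X(z)\}$ and must produce $q_n \in QN$ over those same $X_n$ converging (along a subsequence) to the desired boundary differential. In the inductive step, one first gets a subsequence $X_{n_j}$ and differentials $q_{n_j}$ from \Cref{lem:conv-top}, then feeds that \emph{same} subsequence into the inductive hypothesis (for the lower-support vector $v-w$) to get $q'_{n_j}$ over the \emph{same} surfaces $X_{n_j}$; the sum $q_{n_j}+q'_{n_j}$ is then in $QN$ and converges, by continuity of addition in the Hodge bundle, to $q + q'$. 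This is why the family from \Cref{lem:conv-top} is fixed once at the start, and why the claims explicitly pass the sequence of surfaces around: it keeps the two approximating sequences aligned on a common underlying curve so that the sum makes sense. Your plan passes to the limit too early and then has to solve a harder closure problem in $\partial QN$; you should instead rephrase your single-level and cross-level statements to produce approximating sequences in $QN$ over a prescribed sequence $X_n$, and do the addition there.
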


\begin{proof}
    We begin by applying \Cref{lem:conv-top} with $X$, which gives a family $\{X(z)\}_{z\in\Delta}$.  

    We will first consider the case  $v=\top(v)$.  We will argue by induction, which will involve passing to subsequences and adding quadratic differentials.  For this reason we need to start with an arbitrary sequence of surfaces from our family, and find quadratic differentials along a subsequence of this.    
    
    \begin{claim}
        \label{claim:qd-bdy-top}
        Let $X\in \mathring{\partial} N$ be generic, and $v\in T_X(\mathring{\partial} N)$ with $v=\top(v)$, and suppose we are given a sequence $X_n$ from the family, i.e. $X_n=X(z_n)$ for some $z_n\to 0$.  Then there exist $q_n$ quadratic differentials on $X_n$ with $q_n \in QN$ such that $q_n$ converges, along some subsequence, to a $q\in \calQ_{<\infty}\Mgn$, where for each $i$, $q^i=c_i \phi^{-1}(v^i)$ for some $c_i\in \R_{>0}$.  Furthermore, we can take $\sum_{j\in S_1} c_j = 1$.
    \end{claim}

    \begin{proofclaim}
        We argue by induction on the number $\eta$ of non-zero components of $v$.  The base case is when $\eta=0$, i.e. $v=0$, for which the statement follows by taking all $q_n$ to be identically zero.      
        
        For the inductive step, we assume that we have proved the result when the number of non-zero components is less than $\eta$, for some $\eta\ge 1$, and that $v$ is a vector with exactly $\eta$ components that are non-zero.  By \Cref{lem:conv-top}, there exists $q_n$ quadratic differentials on $X_n$ with $q_n\in QN$ converging, along some subsequence $n_j$, to a $q\ne 0$  with the following property: $q$ agrees with $\phi^{-1}(v)$ rescaled by a non-negative real number $c_i$ on the $i$th component.  

        By \Cref{lem:levelwise-indep} (levelwise independence) applied with this $q$, we get a non-zero $w\in T_X(\mathring{\partial} N)$ that is zero on all components for which it does not agree with $v$.  By linearity of $T_X(\mathring{\partial} N)$, it follows that $v-w\in T_X(\mathring{\partial} N)$.  Now $v-w$ has strictly fewer components that are non-zero than $v$ does, i.e. fewer than $\eta$.  Applying the inductive hypothesis with this $v-w$, gives that there exists $q'_{n_j}$ quadratic differentials on $X_{n_j}$ with $q'_{n_j}\in QN$ converging, along some subsequence, to a $q'\ne 0$  with the following property: $q'$ agrees with $\phi^{-1}(v-w)$ rescaled by a \emph{positive} real number $c_i$ on the $i$th component.  Note that $\phi^{-1}(v-w)$ is non-zero exactly on the components where $v$ is non-zero and $q$ is \emph{zero}.  Thus $q+q'$ equals $\phi^{-1}(v)$, up to \emph{positively} rescaling each factor.
        
        To finish the proof we consider the sequence $q_{n_j}+q'_{n_j}$ of differentials on $X_{n_j}$.   By continuity of addition of differentials parameterized by the quadratic Hodge bundle, $q_{n_j}+q'_{n_j}$ converges, along some subsequence, to $q+q'$.  Now since $q_{n_j},q'_{n_j}\in QN$, by \cite[Proposition 2.1]{benirschke24}, $q_{n_j}+q'_{n_j}\in QN$.  Thus $q_{n_j}+q'_{n_j}$ is a sequence with the desired properties, once we rescale each by a single factor to ensure that $\sum_{j\in S_1} c_j =1$ (for $n$ not belonging to the subsequence $n_j$, we can choose $q_n$ arbitrarily).

    \end{proofclaim}

    We now relax the restriction that $v=\top(v)$.  
    
    \begin{claim}
        \label{claim:qd-bdy}
        Let $X\in \mathring{\partial} N$ be generic, and $v\in T_X(\mathring{\partial} N)$, and suppose we are given a sequence $X_n$ from the family, i.e. $X_n=X(z_n)$ for some $z_n\to 0$.  
        Then there exist $q_n$ quadratic differentials on $X_n$ with $q_n\in QN$ such that $q_n$ converges, along some subsequence, to $q \in \calQ_{<\infty}\Mgn$, where for each $i$, $q^i=c_i \phi^{-1}(v_i)$ for some $c_i\in \R_{>0}$.  Furthermore, we can take $\sum_{j\in S_\ell} c_j = 1$ for all $\ell$.  
    \end{claim}
    
    \begin{proofclaim}
        Any such $v$ can be written as
        $$v = L_1(v) + \cdots + L_n(v).$$
        
        We argue by induction on $n$.  For the base case $n=1$, the statement is just \Cref{claim:qd-bdy-top}.  
        
        For the inductive step, assume that for some $n\ge 2$ we have proved the result for $1,\ldots,n-1$, and suppose $v$ is a vector as above.  By \Cref{lem:top-level-tan}, each $L_1(v) \in T_X(\mathring{\partial} N)$.  Applying \Cref{claim:qd-bdy-top} with $L_1(v)$ gives $q_n$ quadratic differentials on $X_n$ with $q_n \in QN$ such that $q_n$ converges, along some subsequence $n_j$, to a $q$ that agrees with $\phi^{-1}(L_1(v))$ up to positively rescaling factors, and with the desired condition on sums of scaling factors.    

        Now we apply the inductive hypothesis with vector $w:=L_2(v) + \cdots + L_n(v)$, and sequence $\{X_{n_j}\}$.  This yields $q'_{n_j}$ quadratic differentials on $X_{n_j}$ with $q'_{n_j} \in QN$ such that $q'_{n_j}$ converges, along some subsequence $n_j$, to a $q'$ that agrees with $\phi^{-1}(w)$ up to positively rescaling factors, and with the desired condition on sums of scaling factors.  Then $q+q'$ equals $\phi^{-1}(v)$, up to positively rescaling each factor, and with the desired condition on sums of scaling factors. 

        Finally, we consider the sequence $q_{n_j}+q'_{n_j}$ of differentials on $X_{n_j}$. By continuity of addition of differentials parameterized by the quadratic Hodge bundle, $q_{n_j}+q'_{n_j}$ converges, along some subsequence, to $q+q'$.  Now since $q_{n_j},q'_{n_j}\in QN$, by \cite[Proposition 2.1]{benirschke24}, $q_{n_j}+q'_{n_j}\in QN$.  Thus $q_{n_j}+q'_{n_j}$ is a sequence with the desired properties (for $n$ not belonging to the subsequence $n_j$, we can choose $q_n$ arbitrarily).  
    \end{proofclaim}

The lemma follows immediately from the previous Claim.
    
\end{proof}

\subsection{Boundary in Hodge bundle}
In the previous subsection, we produced lots of elements of $\partial QN$.  Our goal now is to deduce properties of this whole space, in order to use it to produce geodesics in the direction of every tangent vector to $\mathring{\partial} N$.  Actually, rather than all of $\partial QN$, we work with a large subset of it, $\mathring{\partial} QN'$ defined below. This set has nicer properties, that we prove below.  This set will then be used in \Cref{prop:bdy-gl2r-geod} to produce a witness that $\mathring{\partial} N$ is $GL^+(2,\R)$-geodesic.  

\paragraph{The sets $\mathring{\partial} QN$ and $\mathring{\partial} QN'$.}

Recall that $\mathring{\partial} N$ is an irreducible component of $\partial N\cap \Delta_{\Gamma}$ (where $\Delta_{\Gamma}$ is a boundary stratum of $\partial \M_{g,n}$). 

\begin{defn}
 Let $\mathring{\partial} QN \subset \partial QN$ consist of those elements lying over $\mathring{\partial} N$. 
\end{defn} 

Since, by \Cref{prop:QN-var}, $QN$ is an algebraic variety, so is $\partial QN$, and hence $\mathring{\partial} QN$ is also.  

We might try to study this variety $\mathring{\partial} QN$ directly.  It may well contain differentials that vanish on some components of the underlying stable Riemann surface.  A priori, there could be components on which $\mathring{\partial} N$ does not vary, and it is natural to expect that elements of $\mathring{\partial} QN$ would vanish on those components.  On the other hand, it could be that an element of $\mathring{\partial} QN$ is ``accidentally" zero on some component, in the sense that $\mathring{\partial} N$ does vary on that component.  These differentials present an issue, and in particular their presence means that it is not clear that $\mathring{\partial} QN$ is $GL^+(2,\R)$ invariant.  Hence, we want to exclude such differentials, which suggests the definition below.  

For each factor of the product cover of $\Delta_{\Gamma}$, the product cover of $\mathring{\partial} N$ can be either \textit{varying} on that factor (i.e. the projection map to that factor is non-constant), or \textit{non-varying}.  

\begin{defn}
    Let $\mathring{\partial} QN'$ be the subset of $\mathring{\partial} QN$ consisting of those $q$ whose product cover is zero exactly on the factors where $\mathring{\partial} N$ is varying.  
\end{defn}

\begin{lemma}
    \label{lem:tan-glob-inv}
    The subset $\mathring{\partial} QN'$ is:
    \begin{enumerate}[(1)]
        \item \label{item:alg} an algebraic subvariety of $\mathring{\partial} QN$, and
        \item \label{item:tang} equals the subset consisting of those $q$ whose product cover is zero exactly on those factors where every tangent vector to the product cover of $\mathring{\partial} N$ is zero.  
    \end{enumerate}
\end{lemma}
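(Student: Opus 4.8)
The plan is to prove the two assertions in turn, working throughout on the product cover $\M_{g_1,n_1}\times\cdots\times\M_{g_k,n_k}$ of $\Delta_\Gamma$, over which the restriction of the quadratic Hodge bundle splits as a direct sum $\bigoplus_i \calQ\M_{g_i,n_i}$; in particular each component map $q\mapsto q^i$ is a regular section, so on any subvariety lying over the product cover the condition $\{q^i=0\}$ is Zariski-closed and $\{q^i\neq 0\}$ is Zariski-open. Write $I_{nv}$ for the set of indices $i$ such that the $i$-th projection $\Phi_i$ of (the product cover of) $\mathring\partial N$ to $\M_{g_i,n_i}$ is constant — i.e. $\mathring\partial N$ is non-varying on the $i$-th factor — and $I_v$ for the complement. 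By definition, $\mathring\partial QN'$ is the locus in $\mathring\partial QN$ of those $q$ whose product cover vanishes exactly on the factors indexed by $I_{nv}$.

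For part (2), the key step is the equivalence: $i\in I_{nv}$ if and only if $v^i=0$ for every tangent vector $v$ to the product cover of $\mathring\partial N$. If $\Phi_i$ is constant, its differential vanishes identically, so $v^i=d\Phi_i(v)=0$ for every (Zariski) tangent vector $v$. Conversely, if $v^i=0$ for every tangent vector at every smooth point of $\mathring\partial N$, then $d\Phi_i$ vanishes on the entire smooth locus; since $\mathring\partial N$ is irreducible and we are in characteristic zero, the dimension of the image $\Phi_i(\mathring\partial N)$ equals the generic rank of $d\Phi_i$, which is $0$, so $\Phi_i$ is constant and $i\in I_{nv}$. The point needing care here is the distinction between smooth points of $\mathring\partial N$, where $T_X\mathring\partial N$ is the genuine tangent space and $d\Phi_i|_X$ the genuine differential, and singular points, where one has only the Zariski tangent space; both directions of the equivalence survive this. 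Granting it, the set of $q$ whose product cover vanishes exactly on the factors where every tangent vector to the product cover of $\mathring\partial N$ is zero coincides with the set of $q$ whose product cover vanishes exactly on the factors indexed by $I_{nv}$, namely $\mathring\partial QN'$; this is precisely assertion (2).

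For part (1), recall that $QN$ is an algebraic variety by \Cref{prop:QN-var}, hence so are $\partial QN$ and $\mathring\partial QN$. Inside $\mathring\partial QN$, the locus $\mathring\partial QN'$ is cut out by the conditions $q^i=0$ for $i\in I_{nv}$ together with $q^i\neq 0$ for $i\in I_v$; by the first paragraph these are respectively Zariski-closed and Zariski-open, so $\mathring\partial QN'$ is locally closed in $\mathring\partial QN$, hence an algebraic subvariety. To descend from the product cover to the quotient defining $\Delta_\Gamma$, note that the finite group $G$ only permutes factors carrying identical discrete data, so it preserves the partition $\{1,\dots,k\}=I_v\sqcup I_{nv}$; consequently the locus just described is $G$-invariant and descends to a (locally closed) subvariety of $\mathring\partial QN$. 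The only genuine obstacle is the converse direction of the equivalence in part (2), which rests on the standard but slightly delicate fact that a morphism from an irreducible variety in characteristic zero whose differential vanishes on a dense open set is constant; the remaining work is formal bookkeeping with the product decomposition of the Hodge bundle and the finite-group quotient.
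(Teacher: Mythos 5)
Your proof is correct and takes essentially the same route as the paper: part (1) follows from the observation that the vanishing/non-vanishing pattern cuts out a locally closed (union-of-strata) subset, and part (2) reduces to the equivalence "non-varying on factor $i$ $\iff$ every tangent vector vanishes in the $i$th component," with the nontrivial direction handled by irreducibility of $\mathring{\partial}N$. Your extra remarks about Zariski tangent spaces at singular points and about descent along the finite group quotient are fine but are sidestepped in the paper by working parenthetically on the smooth locus and by assuming (as stated just before its proof) that $\Delta_\Gamma$ is already a product.
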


From here on we will assume for notational simplicity that $\Delta_{\Gamma}$ is already a product (having to take product covers does not change anything substantially).  
\begin{proof}
     For \eqref{item:alg}, note that $\mathring{\partial} QN'$ is the intersection of $\mathring{\partial} QN$ with a union of strata of the boundary, so it too is an algebraic variety. %

     For \eqref{item:tang}, we need to show that $\mathring{\partial} N$ is non-varying exactly on those factors for which every tangent vector to it is zero.  One direction is immediate: if $\mathring{\partial} N$ is non-varying on some factor, then certainly every tangent vector to it is zero on that factor.  For the other direction, suppose every tangent vector to $\mathring{\partial} N$ is zero on factor $i$.  This implies that the projection map $\rho_i$ to the $i$th factor must be locally constant on (the smooth locus of) $\mathring{\partial} N$. %
     Since $\mathring{\partial} N$ is irreducible, this implies that $\rho_i$ is constant on $\mathring{\partial} N$, hence we have non-varying on factor $i$. 
\end{proof}

\paragraph{$GL^+(2,\R)$-invariance. }
Recall that there are several properties that we need a witness to $\mathring{\partial}N$ being $GL^+(2,\R)$-geodesic to satisfy.  The set $\mathring{\partial} QN'$ will not be the witness, since the witness needs to lie in a single stratum.  Rather, we will eventually intersect with some stratum to produce the witness.  

The next lemma, which gives $GL^+(2,\R)$-invariance, is a key step in the proof of our main theorem.  The idea is to use the real multi-scale space, which admits a continuous $GL^+(2,\R)$-action, to get that the relevant boundary is $GL^+(2,\R)$-invariant.  This is inspired by the analogous continuity property of the $GL^+(2,\R)$ action on the WYSIWYG space \cite[p. 934]{mw2017}, used by Mirzakhani-Wright to great effect.  The reason we use the real multi-scale space rather than WYSWIYG is that we need some control on components on which the differentials vanish.  

\begin{lemma}
    \label{lem:gl2-inv}
    The subset $\mathring{\partial} QN'$ of the Hodge bundle is invariant under the $GL^+(2,\R)$-action.
\end{lemma}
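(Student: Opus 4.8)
The plan is to propagate the $GL^+(2,\R)$-invariance of $QN$ (\Cref{prop:QN-var}) from the interior to the boundary using the real multi-scale space $\widehat{\Xi}\,\calQ(\kappa)$ of \Cref{sec:real-multi}, on which the $GL^+(2,\R)$-action is continuous even though it is not on the Hodge bundle, together with the forgetful maps $\widehat{\Xi}\,\calQ(\kappa)\to\Xi\calQ(\kappa)\to\calQ\Mgn$, whose composite I denote $\pi$.

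First I would record two reductions. Since the $GL^+(2,\R)$-action on $\calQ\Mgn$ is the usual flat action on components carrying a nonzero differential and the identity on the remaining components, it preserves, for any differential, both the set of components on which it vanishes and the dual graph of the underlying stable curve. Hence for $q\in\mathring{\partial} QN'$ the differential $gq$ lies over $\Delta_\Gamma$ and vanishes exactly on the non-varying factors of $\mathring{\partial} N$, so by \Cref{lem:tan-glob-inv} it is enough to show that $gq\in\partial QN$ and that $gq$ lies over $\mathring{\partial} N$. Second, since the $GL^+(2,\R)$-action restricted to the set of differentials in $\partial QN$ lying over $\Delta_\Gamma$ with the prescribed vanishing pattern is continuous — the usual discontinuity of the Hodge-bundle action occurs only when a component drops to zero in the limit, which cannot happen within this set — and $GL^+(2,\R)$ is connected, hence generated by any neighborhood of the identity, it suffices to treat $q$ over a Zariski-generic point of $\mathring{\partial} N$ and $g$ near the identity, the general case following by density and composition.

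The core of the argument is then as follows. Given $q\in\mathring{\partial} QN'$, pick $q_k\in QN$ with $q_k\to q$ in $\calQ\Mgn$; after passing to a subsequence (cf.\ \Cref{foot:subseq}) all $q_k$ lie in a single stratum $\calQ(\kappa)$ and converge in $\Xi\calQ(\kappa)$ to a multi-scale quadratic differential $q^{MS}$ whose top level, after contracting unstable components, is exactly the nonzero part of $q$. Lifting to $\widehat{\Xi}\,\calQ(\kappa)$ and passing to a further subsequence (the fibers over $\Xi\calQ(\kappa)$ are compact) gives $\widehat q_k\to\widehat q^{MS}$ with $\pi(\widehat q_k)=q_k$ and $\pi(\widehat q^{MS})=q$. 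Applying $g$ and using continuity of the action on $\widehat{\Xi}\,\calQ(\kappa)$ and of $\pi$, together with the compatibility of the two actions on the interior, gives $gq_k=\pi(g\widehat q_k)\to\pi(g\widehat q^{MS})$; since $gq_k\in QN$ by \Cref{prop:QN-var}, we get $\pi(g\widehat q^{MS})\in\overline{QN}=\partial QN$. It remains to identify $\pi(g\widehat q^{MS})$ with $gq$. On the varying (nonzero) components both are $g$ applied flatly to the nonzero part of $q$. On the non-varying components $gq$ equals $q$ unchanged, whereas $\pi(g\widehat q^{MS})$ carries the image of the $g$-deformed lower levels of $q^{MS}$; but $\mathring{\partial} N$ is non-varying on these factors, so $\partial QN$ lies over a single point in each, and therefore — granted that $\pi(g\widehat q^{MS})$ lies over $\mathring{\partial} N$, which for $q$ over a generic point and $g$ near the identity follows from continuity of $g\mapsto p(\pi(g\widehat q^{MS}))$ together with the fact that a neighborhood of $p(q)$ in $\partial N\cap\Delta_\Gamma$ lies in $\mathring{\partial} N$ — these components coincide as well. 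Thus $gq=\pi(g\widehat q^{MS})\in\partial QN$, and it lies over $\mathring{\partial} N$ with the prescribed vanishing pattern, so $gq\in\mathring{\partial} QN'$; the reductions then finish the proof.

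I expect the main obstacle to be precisely the identification $\pi(g\widehat q^{MS})=gq$: this is where the discontinuous Hodge-bundle action and the continuous real-multi-scale action are reconciled, and it is exactly here that the definition of $\mathring{\partial} QN'$ — insisting that $q$ vanish on the factors where $\mathring{\partial} N$ does not vary — is used in an essential way, since the potentially problematic $g$-deformation of the vanishing components is forced to be trivial by the rigidity of $\mathring{\partial} N$ there. This is the quadratic-differential analogue of the continuity property of the $GL^+(2,\R)$-action on the WYSIWYG space used by Mirzakhani--Wright, with the real multi-scale space chosen in place of WYSIWYG precisely for the control it gives on degenerating components. The remaining point-set bookkeeping in the reductions (closedness, density, and the connectedness argument) is routine.
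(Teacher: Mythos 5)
Your proof is correct and follows essentially the same strategy as the paper's: lift a convergent sequence $q_k\to q$ in $QN$ to the real multi-scale space, exploit continuity of the $GL^+(2,\R)$-action there to get $\pi(g\widehat q^{MS})\in\partial QN$, and then use the non-varying property of $\mathring{\partial}N$ on the zero factors to force $\pi(g\widehat q^{MS})=gq$. The one place where you diverge is the promotion from (generic $q$, $g$ near $\mathrm{id}$) to all $q$ and all $g$: you invoke continuity of the action on the fixed-vanishing-pattern locus plus connectedness of $GL^+(2,\R)$, whereas the paper isolates this into a separate real-analyticity lemma (\Cref{lem:local-inv}) about $\R$-actions with analytic orbit maps; both work, and yours is somewhat more elementary, though the paper's analyticity argument sidesteps having to verify that the restricted action really is continuous on the relevant locally closed subset.
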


\begin{proof}
    Consider some $q\in \mathring{\partial} QN'$ lying over a generic point of $\mathring{\partial} N$.  There exists some sequence $\{q_n\}$ in $QN$ converging to $q$.  By passing to a subsequence if necessary, we can assume that all $q_n$ lie in a fixed stratum $\calQ(\mu)$.  
    
    We consider the real multi-scale space $\Xi \calQ(\mu)$ associated to $\calQ(\mu)$ introduced in \Cref{sec:real-multi}.  By passing to a subsequence if necessary, we can assume that the $q_n$ converge in the real multi-scale space  to a real multi-scale differential $q^R$ (see \Cref{foot:subseq}).
    In \Cref{sec:real-multi} several maps were discussed; we will use $\pi: \Xi\calQ(\mu)\to \calQ \Mgn$.  Since $\pi$ is continuous, we have $\pi(q^R)=q$. 

    Now the real multi-scale space admits a \emph{continuous} $GL^+(2,\R)$-action, extending the action on $\calQ(\mu)$.  So, for any $t\in \R$, we get that $g_tq_n \to g_tq^R$ in the real multi-scale space, as $n\to \infty$.  It follows that $g_tq_n \to \pi(g_tq^R)$ in the Hodge bundle.  Since $g_tq_n$ is also in $QN$ (by its very definition), we get that $\pi(g_tq^R)\in \partial QN$.  

    Note that the $g_t$ action on the boundary of the real multi-scale space preserves the boundary stratum %
    of the underlying Riemann surfaces.  From this, and since locally near $q$ the varieties $\mathring{\partial N}$ and $\partial N$ agree (since $q$ was assumed to lie over a generic point of $\mathring{\partial} N$), we get that for $t$ of small absolute value, the projection of $g_tq^R$ to $\overline\M_{g,n}$ lies in $\mathring{\partial} N$.  But the components of $g_tq^R$ corresponding to vanishing differentials are exactly the same as those for $q$ (up to contractions of some $\P^1$ components).  And since $\mathring{\partial} N$ does not vary on these components (since $q\in \mathring{\partial} QN'$), we get that $g_tq^R$ agrees with $q$ in terms of the Riemann surfaces (up to contractions of some $\P^1$ components) on these vanishing differential components.  On the other hand, from the way the actions are defined, $g_tq$ and $g_tq^R$ agree for the components where the differential does not vanish.  Hence $\pi(g_tq^R) = g_tq$, and so $g_tq \in \partial QN$.  Since the components on which the differentials vanish are exactly the same for $q$ and $g_tq$, and we've already seen that the projection of $g_tq^R$ to $\overline\M_{g,n}$ (and hence of $g_tq$) lies in $\mathring{\partial} N$, we in fact get that $g_tq\in \mathring{\partial} QN'$.  
    
    So we have shown that for generic $q$, and small $t$ (depending on $q$), $g_tq\in \mathring{\partial} QN'$. \Cref{lem:local-inv} allows us to relax the generic and small conditions -- we get for any $q$ and $t\in \R$ that $g_tq \in \mathring{\partial} QN'$.  Since $\mathring{\partial} QN'$ is a complex algebraic variety, it is invariant under $r_\theta$, for any $\theta \in S^1$.  Since these $g_t,r_\theta$ generate $GL^+(2,\R)$, we get the desired result.  
\end{proof}

\begin{lemma}
    \label{lem:local-inv}  
    Suppose $b_t$ is a continuous action of $\R$ on a real-analytic variety $M$, such that the orbit maps are real-analytic (in the sense that for any open $U\subset M$, $x\in U$, and real-analytic function $f:U\to \R$, the function $t\mapsto f(b_tx)$ is real-analytic on the open set $\{t: b_tx \in U\}$).  Let $X\subset M$ be a real-analytic subvariety with the following invariance property: for a generic $x\in X$, there exists some $t_0=t_0(x)>0$ such that $b_tx\in X$ for all $t$ with $|t|<t_0$.  Then $X$ is $b_t$-invariant.  
\end{lemma}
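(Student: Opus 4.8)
The plan is to prove invariance of $X$ orbit-by-orbit for generic points first, and then spread this to all of $X$ by a density argument. I would fix a generic $x_0\in X$ (so the hypothesis applies to it) and write $\gamma(t):=b_tx_0$. Set $T:=\{t\in\R:\gamma(t)\in X\}$. Since $X$ is a real-analytic subvariety it is closed, and $\gamma$ is continuous, so $T$ is closed; by hypothesis $T\supseteq(-t_0(x_0),t_0(x_0))$, so its interior $T^\circ$ is nonempty. The crux will be to show that $T^\circ$ is \emph{also} closed in $\R$; then connectedness of $\R$ forces $T^\circ=\R$, hence $\gamma(\R)\subseteq X$.

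To see $T^\circ$ is closed, I would take $s_n\in T^\circ$ with $s_n\to s$. Since $T$ is closed, $\gamma(s)\in X$. Choose a neighborhood $U$ of $\gamma(s)$ in $M$ and finitely many real-analytic functions $f_1,\dots,f_m:U\to\R$ with $X\cap U=\{f_1=\cdots=f_m=0\}$ (a real-analytic subvariety is locally such a finite common zero set). By continuity pick $\eta>0$ with $\gamma((s-\eta,s+\eta))\subseteq U$. For large $n$, $s_n\in(s-\eta,s+\eta)\cap T^\circ$, so there is an open subinterval $J\ni s_n$ with $J\subseteq(s-\eta,s+\eta)\cap T$; on $J$ we have $\gamma(t)\in X\cap U$, hence $f_i\circ\gamma\equiv 0$ on $J$. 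By the orbit-map analyticity hypothesis, $f_i\circ\gamma$ is real-analytic on $(s-\eta,s+\eta)$, and a one-variable real-analytic function vanishing on a nonempty open subinterval of a connected interval vanishes identically; thus $f_i\circ\gamma\equiv 0$ on $(s-\eta,s+\eta)$, so $\gamma(t)\in X\cap U$ for all such $t$, giving $(s-\eta,s+\eta)\subseteq T$ and $s\in T^\circ$.

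Finally, to pass from generic orbits to all of $X$: for each fixed $t$, $b_{-t}(X)$ is closed (as $b_{-t}$ is a homeomorphism of $M$), so $\bigcap_{t\in\R}b_{-t}(X)$ is a closed subset of $M$ contained in $b_0(X)=X$. By the first two paragraphs it contains every generic point of $X$, and these are dense in $X$; hence $\bigcap_{t}b_{-t}(X)=X$. This says $b_t(X)\subseteq X$ for every $t$, and applying it to $-t$ gives the reverse inclusion, so $b_t(X)=X$ for all $t$, which is the desired invariance.

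I expect the only real obstacle to be the middle paragraph: the hypothesis says nothing about times at which $\gamma$ passes through the non-generic locus of $X$, and the analytic-continuation trick (vanishing of a real-analytic function of one variable on an open subinterval of its connected domain) is exactly what is needed to push membership in $X$ through those times. The surrounding facts — that a real-analytic subvariety is locally a finite zero set, and that its generic locus is dense — are routine.
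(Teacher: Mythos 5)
Your proof is correct and takes essentially the same route as the paper: an open-and-closed argument in the time variable, powered by real-analyticity of $t\mapsto f(b_tx)$ to extend membership in $X$ past non-generic times along each generic orbit, followed by density of generic points together with closedness of $X$ to upgrade to all of $X$. (The only cosmetic wrinkle is your claim that $X\cap U$ is a \emph{finite} common zero set of analytic functions, which can fail for non-coherent real-analytic sets; the argument is unchanged if one instead uses the full family of analytic functions vanishing on $X\cap U$, which is how the paper phrases that step.)
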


\begin{proof}
    We first claim that for generic $x\in X$, we have that $b_tx\in X$ for all $t\in \R$.  %
    It suffices to show that the set 
    $$S:=\{t_0\in \R^+: b_tx \in X \text{ for all } t \text{ with } |t|<t_0\},$$
    is both closed and open in $\R^+$ (it is non-empty by hypothesis).  
    
    Closedness is immediate from the definition.  
    
    For openness, let $t_0\in S$, and consider a neighborhood $V$ of $b_{t_0}x$ in $M$ such that $X\cap V$ is cut out in $V$ by real-analytic functions defined on all of $V$.  There is some $\epsilon$ such that $b_tx\in V$ for all $t\in (t_0-\epsilon, t_0+\epsilon)$.  
    Let $f:V\to \R$ be an analytic function vanishing on $X\cap V$, and consider the function $g:(t_0-\epsilon, t_0+\epsilon)\to \R$ given by $t\mapsto f(b_tx)$.  This is also real-analytic, by the assumption on analyticity  of orbit maps.  It vanishes for all $t\in (t_0-\epsilon,t_0)$, since for these $b_tx\in X\cap V$.  But since it's real-analytic, $g$ must then vanish on all of $(t_0-\epsilon, t_0+\epsilon)$. 
    So for $t$ in this interval, $b_t x$ is in the vanishing locus of $f$.  
    
    Since this argument holds for any $f$, we get that $b_tx \in X$ for any $t\in (t_0-\epsilon, t_0+\epsilon)$.  Together with an analogous argument starting with $b_{-t_0}x$, we conclude that $(t_0-\epsilon', t_0+\epsilon')\subset S$, for $\epsilon'$ the minimum of the $\epsilon$ produced for $b_{t_0}x$ and $b_{-t_0}x$.  So we have shown openness, and thus our first claim.  

    Finally to get the desired result for \textit{all} $x\in X$ from the above, we use density of generic points, the continuity of the action, and the fact that $X$ is closed in $M$.  

\end{proof}

\paragraph{Dimension bound and the map $B_\mathcal S$.}
The next property that we address towards using $\mathring{\partial} QN'$ to produce a witness that $\mathring{\partial}N$ is $GL^+(2,\R)$-geodesic is the dimension lower bound \eqref{item:dimQ}.  

If we knew that $\phi^{-1}(v) \in \partial QN$ for each $v\in T\partial N$, this would be easy, since $T\partial N$ has the right dimension, and $\phi$ is a homeomorphism.  We will use \Cref{prop:qd-seq}, proved above, which is a related statement, but somewhat less precise in that the differential produced could differ from $\phi^{-1}(v)$ by certain positive scaling factors on the various components.  Consequently our proof of the dimension bound will be more complicated.  

Let $\mathcal S$ be some decomposition of the set  of factor labels $\{1,\ldots,k\}$ into disjoint subsets $S_1 \sqcup \cdots \sqcup S_n$, where $S_1,\ldots,S_{n-1}$ are non-empty and $S_n$ consists of the factors where some (equivalently, all) differential in $\mathring{\partial} QN'$ is zero.  
We define maps 
$$B_\mathcal S: \mathring{\partial} QN' \to T\partial \M,$$
$$B_\mathcal S (q^1,\ldots,q^k) = \left(r_1 \phi(q^1), \ldots , r_k\phi(q^k)\right),$$
where $r_i$ is defined as follows.  If $i\in S_n$ then $r_i=1$, and otherwise $r_i:= \frac{1}{\|q^i\|} \sum_{j\in S_\ell} \|q^j\|$, with $\ell$ the index such that $i\in S_\ell$. 
Because we define the map only on $\mathring{\partial} QN'$, there is no division by zero.  The map is defined so that tangent vectors in the image have equal norms on the components lying in the same part of the decomposition.

\begin{lemma}
    \label{lem:Bsurj}
    The set 
    \[    
    \bigcup_\mathcal S B_\mathcal S (\mathring{\partial} QN') 
    \]
    contains a dense Zariski open subset of $T\mathring{\partial} N$.  Here the union ranges over the decompositions $\mathcal S$ where $B_\mathcal S$ is defined (see above).  
\end{lemma}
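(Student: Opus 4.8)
The plan is to show that every sufficiently generic $v\in T_X\mathring{\partial} N$ equals $B_{\mathcal S(v)}(q)$ for a suitable $q\in\mathring{\partial} QN'$, where $\mathcal S(v)$ is the decomposition of the factor labels grouped by the norms of the components of $v$; since such $v$ sweep out a Zariski-dense open subset of $T\mathring{\partial} N$, this proves the lemma. Throughout, let $Z\subseteq\{1,\dots,k\}$ be the set of factors on which every tangent vector to $\mathring{\partial} N$ vanishes. By \Cref{lem:tan-glob-inv}\eqref{item:tang}, $Z$ is exactly the set of factors on which every element of $\mathring{\partial} QN'$ vanishes, i.e.\ the common last part $S_n$ of all the decompositions $\mathcal S$ over which the union in the statement is taken.

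First I would isolate the good locus. Fix a Zariski-dense open subset $\mathring{\partial} N^{\circ}$ of the regular locus of $\mathring{\partial} N$ on which every point is ``generic'' in the sense required by \Cref{prop:qd-seq} and \Cref{lem:conv-top}; then $T\mathring{\partial} N^{\circ}$ is irreducible. For each $i\notin Z$ the projection $T\mathring{\partial} N^{\circ}\to T\M_{g_i,n_i}$ onto the $i$-th factor is an algebraic map that is not identically zero, so its vanishing locus is a proper closed subvariety. Intersecting the complements of these finitely many subvarieties gives a Zariski-dense open $W\subseteq T\mathring{\partial} N$ consisting of pairs $(X,v)$ with $X$ generic and $v^i\neq 0$ for all $i\notin Z$ (while $v^i=0$ for $i\in Z$ automatically). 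For such $v$ the zero-set of $v$ is exactly $Z$, so $\mathcal S(v)$ has last part $Z$ and is one of the admissible decompositions appearing in the union.

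Now fix $(X,v)\in W$ and apply \Cref{prop:qd-seq}: it produces $q\in\partial QN$ with $q^i=c_i\,\phi^{-1}(v^i)$ for reals $c_i>0$ normalized by $\sum_{j\in S_\ell}c_j=1$ on each nonempty part $S_\ell$ of $\mathcal S(v)$. Since $\phi^{-1}(v^i)$ is a quadratic differential on the $i$-th component of $X$ and $X\in\mathring{\partial} N$, the differential $q$ lies over $\mathring{\partial} N$, so $q\in\mathring{\partial} QN$; moreover $q^i=0$ exactly when $v^i=0$, i.e.\ exactly for $i\in Z$ by the previous paragraph, so in fact $q\in\mathring{\partial} QN'$. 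Next I would compute $B_{\mathcal S(v)}(q)$. Using $\phi(cq')=c\,\phi(q')$ for $c>0$ and that $\phi$ is norm-preserving on a single factor, one gets $\phi(q^i)=c_i v^i$ and $\|q^i\|=c_i\|v^i\|$. For $i\in S_\ell$ with $S_\ell\neq Z$, all $\|v^j\|$ with $j\in S_\ell$ agree, so $r_i=\frac{1}{\|q^i\|}\sum_{j\in S_\ell}\|q^j\|=\frac{1}{c_i}\sum_{j\in S_\ell}c_j=\frac{1}{c_i}$, whence $r_i\phi(q^i)=v^i$; for $i\in Z$ one has $r_i=1$ and $\phi(q^i)=\phi(0)=0=v^i$. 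Therefore $B_{\mathcal S(v)}(q)=v$, so $W\subseteq\bigcup_{\mathcal S}B_{\mathcal S}(\mathring{\partial} QN')$, which proves the lemma.

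I expect the only real subtlety to be the interaction between the positive rescalings furnished by \Cref{prop:qd-seq} and the compensating factors $r_i$ built into $B_{\mathcal S}$: it is essential both that the $c_i$ be \emph{positive} (so that $\phi(c_i\phi^{-1}(v^i))=c_i v^i$, with no sign ambiguity) and that they be normalized by $\sum_{j\in S_\ell}c_j=1$, which is exactly the last assertion of \Cref{prop:qd-seq} and is what makes the factors cancel exactly. The other point requiring care is verifying that the produced $q$ lands in $\mathring{\partial} QN'$ rather than merely in $\mathring{\partial} QN$ with some ``accidental'' vanishing; this is what forces the genericity requirement $v^i\neq 0$ for $i\notin Z$, and hence restricts us to a Zariski-dense open subset of $T\mathring{\partial} N$ rather than all of it.
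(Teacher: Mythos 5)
Your proposal is correct and follows essentially the same route as the paper's proof: apply \Cref{prop:qd-seq} to a generic $v$, use \Cref{lem:tan-glob-inv}\eqref{item:tang} to verify that the resulting $q$ lands in $\mathring{\partial} QN'$, and then compute that $B_{\mathcal S(v)}(q)=v$ using positivity of the $c_i$ and the normalization $\sum_{j\in S_\ell}c_j=1$. The only cosmetic difference is that you construct the dense Zariski open locus $W$ more explicitly, whereas the paper simply says ``generic.''
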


\begin{proof}
    We apply \Cref{prop:qd-seq}, with $v$ a generic tangent vector in $T\mathring{\partial} N$, yielding a $q\in \partial QN$ (on the same surface as $v$), and with $q^i=c_i \phi^{-1}(v_i)$ and $\sum_{j\in S_\ell} c_j = 1$ for all $\ell$.  
    We claim that $q\in \mathring{\partial} QN'$.  Note that since the $c_i$ are all positive, $q$ is zero exactly on those factors where $v$ is.  So, by \Cref{lem:tan-glob-inv}, \eqref{item:tang}, it suffices to show that $v$ belongs to the set $V'$ of vectors that are $0$ exactly on the factors where \textit{all} elements of $T\mathring{\partial} N$ are $0$.  Note that $V'$ is a Zariski open subset of $T\mathring{\partial} N$.  Since $T\mathring{\partial} N$ is irreducible, $V'$ is non-empty, and so a generic vector $v$ will lie in it, which gives the claim.  %

    Now we claim that $B_{\mathcal S(v)} (q)=v$.  First note that for $i\in S_n$, we have $B_{\mathcal S(v)}(q)^i =v^i$.  Now for $i$ in $S_\ell \ne S_n$, using that $\|v_i\|=\|v_j\|$ for any other $j\in S_\ell$, and that $\phi$ is norm-preserving, we get
    
    $$B_{\mathcal S(v)}(q)^i = r_i\phi(c_i \phi^{-1}(v^i)) = \frac{\sum_{j\in S_\ell} \|c_j\phi^{-1}(v_j)\| }{\|c_i \phi^{-1}(v^i)\|} c_i v^i = \frac{\|\phi^{-1}(v^i)\| \sum_{j\in S_\ell} c_j }{c_i \| \phi^{-1}(v^i)\|} c_i v^i= v^i,$$
    for each $i$, which establishes our claim.  
    
    The desired result then follows, since $\mathcal{S}(v)$ is one of the decompositions over which the union in the statement is taken.  
\end{proof}

\begin{lemma}
\label{lem:B-haus-dim}
For any $\mathcal S$ for which $B_\mathcal S$ is defined, and any $X\in \mathring{\partial} N$, we have
\[
\dim_H B_\mathcal S(\mathring{\partial} QN'_X) \le  \dim \partial QN_X',
\]
\end{lemma}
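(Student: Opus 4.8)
The plan is to prove the estimate by showing that $B_{\mathcal S}$, restricted to the fiber $\mathring{\partial}QN'_X$, is \emph{locally Lipschitz}, and then invoking the standard facts that a locally Lipschitz map does not raise Hausdorff dimension and that the Hausdorff dimension of an algebraic variety equals its (real) dimension. Granting the Lipschitz property, the bookkeeping is routine: $\mathring{\partial}QN'_X$ is an algebraic variety inside the finite-dimensional vector space of quadratic differentials on $X$, hence $\sigma$-compact, so write it as a countable union of compacts $K_j$; on each $K_j$ local Lipschitz-ness upgrades to an honest Lipschitz bound, so $\dim_H B_{\mathcal S}(K_j)\le \dim_H K_j$, and since $K_j\subseteq\mathring{\partial}QN'_X$ is contained in a finite union of $C^1$-manifolds of dimension at most $\dim\mathring{\partial}QN'_X$, we get $\dim_H K_j\le\dim\mathring{\partial}QN'_X$. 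As Hausdorff dimension of a countable union is the supremum over the pieces,
\[
\dim_H B_{\mathcal S}(\mathring{\partial}QN'_X)=\sup_j\dim_H B_{\mathcal S}(K_j)\le\dim\mathring{\partial}QN'_X\le\dim\partial QN'_X .
\]

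It therefore remains to establish local Lipschitz-ness of $B_{\mathcal S}$ on $\mathring{\partial}QN'_X$. Recall $B_{\mathcal S}(q)=(r_1\phi(q^1),\dots,r_k\phi(q^k))$. On a factor $i\in S_n$ the map is identically $0$, so nothing is needed there. Fix $i\notin S_n$. By \Cref{lem:tan-glob-inv} every $q\in\mathring{\partial}QN'_X$ has $q^i\ne 0$, so the projection of $\mathring{\partial}QN'_X$ to the $i$th factor stays in a compact-exhaustible subset of $\calQ_{X_i}\setminus\{0\}$; there it suffices to know \textbf{(a)} that $r_i$ — a ratio of sums of area norms $\|q^j\|$ with denominator bounded away from $0$ — is locally Lipschitz, which is immediate from the $C^1$-regularity of the area norm off the zero section recorded in the proof of \Cref{lem:homeo}; and \textbf{(b)} that $\phi$ is locally Lipschitz on $\calQ_{X_i}\setminus\{0\}$. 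Given (a) and (b), $B_{\mathcal S}$ is a finite combination of products and compositions of locally Lipschitz maps, hence locally Lipschitz on $\mathring{\partial}QN'_X$.

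The main obstacle is (b): although $\phi$ is a priori only known to be a homeomorphism (\Cref{lem:homeo}), it is in fact locally Lipschitz away from the zero section. To see this one estimates, for $q_1,q_2$ in a fixed compact subset of $\calQ_{X_i}\setminus\{0\}$, using the duality formula \eqref{eq:mu-char} for the Teichm\"uller norm,
\[
\|\phi(q_1)-\phi(q_2)\|\ \le\ \bigl|\,\|q_1\|-\|q_2\|\,\bigr|\ +\ \|q_2\|\sup_{\|q_0\|\le 1}\Bigl|\int_{X_i} q_0\Bigl(\frac{\overline{q_1}}{|q_1|}-\frac{\overline{q_2}}{|q_2|}\Bigr)\Bigr| .
\]
The first term is $\le\|q_1-q_2\|$ by the reverse triangle inequality. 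For the second, one splits $X_i$ into the locus where $|q_2|$ is bounded below and a shrinking union of disks around the zeros of $q_2$: on the former, $\bigl|\tfrac{\overline{q_1}}{|q_1|}-\tfrac{\overline{q_2}}{|q_2|}\bigr|$ is controlled by a constant multiple of $|q_1-q_2|/|q_2|$ (using that $\|q_0\|\le 1$ forces a pointwise bound on the holomorphic $q_0$, since all norms on the finite-dimensional space $\calQ_{X_i}$ are equivalent), while on the latter one exploits the cancellation $\int e^{-ik\theta}\,d\theta=0$ around a zero of order $k$ to defeat the potential blow-up; optimizing the radius of the disks against $\|q_1-q_2\|$ yields a Lipschitz bound. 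Making this cancellation estimate fully precise is the technical heart of the argument; once (b) is in hand, the rest proceeds as above.
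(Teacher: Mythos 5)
Your overall shape is the paper's: factor $B_{\mathcal S}$ into norm functions and $\phi$-components, show the map has enough regularity so that it cannot raise Hausdorff dimension, then push the dimension count through. The bookkeeping (writing the domain as a countable union, comparing Hausdorff dimension to algebraic dimension, taking a sup) is correct. Where you differ from the paper is the \emph{source} of the regularity, and this is where your argument has a real gap.

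The paper does \emph{not} attempt to prove that $\phi$ is locally Lipschitz on all of $\calQ_{X_i}\setminus\{0\}$. Instead it intersects $\mathring{\partial}QN'_X$ with each of the finitely many strata $\calQ(\mu)$ --- i.e.\ it fixes the zero structure of the differentials --- and on each such stratum shows that both the area norm and $\phi$ are \emph{real-analytic} in period coordinates (the $\phi$-component is recognized as the time derivative of the real-analytic path $t\mapsto g_t q$). Real-analytic $\Rightarrow$ smooth $\Rightarrow$ locally Lipschitz on compacts, and then one takes a max over the finitely many strata. The stratification is not merely a convenience: across a stratum boundary, where zeros of $q$ collide or split, the pointwise Beltrami form $\bar q/|q|$ changes character abruptly near the colliding zeros, and there is no obvious reason $\phi$ should be smooth, or even Lipschitz with a uniform constant, across such transitions.

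Your claim (b) --- global local Lipschitz-ness of $\phi$ on $\calQ_{X_i}\setminus\{0\}$, without any stratum restriction --- is exactly the hard point, and you explicitly acknowledge that you do not prove it (``making this cancellation estimate fully precise is the technical heart of the argument''). The sketch you give has the right ingredients (reverse triangle inequality, pointwise control of $q_0$ via finite-dimensionality, angular cancellation at a zero), but it does not address the cases the stratification is designed to sidestep: (i) the zeros of $q_1$ need not coincide in location or order with those of $q_2$, so the cancellation must be run with respect to two different configurations simultaneously, in particular when a zero is splitting; and (ii) $q_0$ may have a simple pole at a marked point close to a zero of $q_2$, so the ``pointwise bound on $q_0$'' is not available there. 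These are not cosmetic issues; they are precisely why the paper proves real-analyticity stratum by stratum rather than Lipschitz-ness globally. As written, your argument reduces the lemma to an unproven and substantially harder analytic estimate, so there is a genuine gap.

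If you want to salvage your approach while keeping a Lipschitz framing, the natural fix is to import the paper's stratification: restrict to $\mathring{\partial}QN'_X\cap\calQ(\mu)$, note that on a fixed stratum the map is $C^1$ (indeed real-analytic), hence locally Lipschitz on compacts, and then take a finite max over $\mu$. That recovers the paper's proof with no new estimates needed.
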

where $\dim_H$ denotes Hausdorff dimension.  

\begin{proof}

    We first show that $B_\mathcal S$ is a real-analytic map between varieties when restricted to each stratum $\mathcal{Q}(\mu)$. %
    On  $\mathcal{Q}(\mu)$, each norm function $\|q^j\|$ is real-analytic (for $j$ a component where the differentials are not identically zero), since it has a real-analytic expression in terms of period coordinates.  Next we claim that each component function $\phi(q^j)$ is real-analytic (as a function of $q^j$, and hence also as a function of $q$).  %
    On $\mathcal{Q}(\mu)$, the map $t\mapsto g_tq^j$ has a real-analytic expression in terms of period coordinates.  The tangent vector to this path is $\phi(q^j)/\|q^j\|$.  It follows that the component function $\phi(q^j)$ is real-analytic.  Since $B_\mathcal S$ is constructed from the $\phi(q^j)$ and norm functions, the restriction of $B_\mathcal S$  to $\mathcal{Q}(\mu)$ is real-analytic. 

    Real-analytic maps are smooth, and smooth maps are non-increasing for Hausdorff dimension.
    Hence 
    $$\dim_H B(\mathring{\partial} QN'_X\cap \mathcal{Q}(\mu)) \le \dim_H(\mathring{\partial} QN'_X\cap \mathcal{Q}(\mu)).$$  

    Applying this for each of the finitely many $\mu$ gives
    \begin{align*}        
    \dim_H B(\mathring{\partial} QN'_X) &= \max_\mu \dim_H B(\mathring{\partial} QN'_X\cap \mathcal{Q}(\mu)) \\
        &\le \max_\mu \dim_H(\mathring{\partial} QN'_X\cap \mathcal{Q}(\mu)) =\dim \mathring{\partial} QN'_X,
    \end{align*}
    as desired.  
\end{proof}

\begin{lemma}
    \label{lem:quad-dim}
   For generic $X\in \mathring{\partial} N$, we have 
   \[
   \dim \mathring{\partial} QN'_X \ge \dim T_X \mathring{\partial} N.
   \]
\end{lemma}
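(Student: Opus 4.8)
The plan is to assemble \Cref{lem:Bsurj} and \Cref{lem:B-haus-dim} by way of an elementary generic‑fiber argument for the bundle projection $\pi\colon T\mathring{\partial} N\to\mathring{\partial} N$. Throughout I would let $\dim$ denote complex dimension and $\dim_H$ Hausdorff dimension, and record that a nonempty complex algebraic variety $V$ satisfies $\dim_H V=2\dim V$. In particular, for $X$ in the (Zariski‑dense open) smooth locus of $\mathring{\partial} N$ the space $T_X\mathring{\partial} N$ is a complex vector space of dimension $\dim\mathring{\partial} N$, so $\dim_H T_X\mathring{\partial} N=2\dim\mathring{\partial} N$ and $\dim T_X\mathring{\partial} N=\dim\mathring{\partial} N$; it therefore suffices to show $\dim\mathring{\partial} QN'_X\ge\dim\mathring{\partial} N$ for generic $X$.

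First I would promote the total‑space statement of \Cref{lem:Bsurj} to a fiberwise one. Let $W\subseteq T\mathring{\partial} N$ be the dense Zariski‑open subset contained in $\bigcup_{\mathcal S}B_{\mathcal S}(\mathring{\partial} QN')$ furnished by \Cref{lem:Bsurj}; after shrinking $W$ I may assume it lies over the smooth locus of $\mathring{\partial} N$, over which $T\mathring{\partial} N$ is a vector bundle and is irreducible of dimension $2\dim\mathring{\partial} N$. Then $Z:=T\mathring{\partial} N\setminus W$ is a proper closed subvariety, so $\dim Z<2\dim\mathring{\partial} N$. Applying Chevalley's theorem and the fiber‑dimension theorem to $\pi|_Z$: if $\pi(Z)$ is not Zariski‑dense in $\mathring{\partial} N$, then $Z\cap T_X\mathring{\partial} N=\emptyset$ for generic $X$; if it is dense, then $\dim(Z\cap T_X\mathring{\partial} N)\le\dim Z-\dim\mathring{\partial} N<\dim\mathring{\partial} N$ for generic $X$. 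In either case, for generic $X$ the set $W_X:=W\cap T_X\mathring{\partial} N$ is a nonempty Zariski‑open subset of $T_X\mathring{\partial} N\cong\C^{\dim\mathring{\partial} N}$, hence $\dim_H W_X=2\dim\mathring{\partial} N$.

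Next I would check that $W_X\subseteq\bigcup_{\mathcal S}B_{\mathcal S}(\mathring{\partial} QN'_X)$. Given $w\in W_X$, by the choice of $W$ there are a decomposition $\mathcal S$ and an element $q\in\mathring{\partial} QN'$ with $B_{\mathcal S}(q)=w$. By construction $B_{\mathcal S}$ sends an element of $\mathring{\partial} QN'$ lying over a surface $Y\in\mathring{\partial} N$ to a vector of $T_Y\partial\M$ (each output component is $r_i$ times $\phi$ of the $i$‑th component differential, which is tangent at the $i$‑th component of $Y$). Since $w\in T_X\partial\M$ and distinct fibers of $T\partial\M\to\partial\M$ are disjoint, we get $Y=X$, so $q\in\mathring{\partial} QN'_X$; this proves the inclusion.

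Finally, chaining Hausdorff dimensions over the finite family of decompositions $\mathcal S$ and invoking \Cref{lem:B-haus-dim} — whose right‑hand side equals $\dim_H\mathring{\partial} QN'_X=2\dim\mathring{\partial} QN'_X$ since $\mathring{\partial} QN'_X$ is complex algebraic — I would obtain
\[ 2\dim\mathring{\partial} N = \dim_H W_X \le \dim_H\Bigl(\bigcup_{\mathcal S}B_{\mathcal S}(\mathring{\partial} QN'_X)\Bigr) = \max_{\mathcal S}\dim_H B_{\mathcal S}(\mathring{\partial} QN'_X) \le 2\dim\mathring{\partial} QN'_X. \]
Dividing by $2$ and using $\dim T_X\mathring{\partial} N=\dim\mathring{\partial} N$ for generic (smooth) $X$ gives $\dim\mathring{\partial} QN'_X\ge\dim T_X\mathring{\partial} N$, as required. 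The main obstacle, such as it is, lies in the second paragraph: one must be careful that the Zariski‑dense open set of \Cref{lem:Bsurj} genuinely restricts to a full‑dimensional open subset of the generic fiber — i.e. that its complement, being a proper closed subset of the irreducible variety $T\mathring{\partial} N$, cannot meet generic fibers in a set of full dimension — and that the complex‑versus‑Hausdorff dimension conversions are tracked consistently. The remaining steps are bookkeeping with the preceding lemmas.
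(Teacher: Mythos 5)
Your proposal is correct and follows essentially the same route as the paper's proof — both combine \Cref{lem:Bsurj} and \Cref{lem:B-haus-dim} via the same chain of inequalities. The added value in your write-up is that you explicitly justify the passage from the total-space statement of \Cref{lem:Bsurj} to the fiberwise statement needed here (the Chevalley/fiber-dimension argument of your second paragraph), and you track the factor of two between complex and Hausdorff dimensions carefully; the paper elides both of these points.
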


\begin{proof}

For generic $X$, by \Cref{lem:Bsurj}, we get that $\bigcup_\mathcal S B_\mathcal S (\mathring{\partial} QN'_X)$ contains a Zariski open subset of $T_X\mathring{\partial} N$.
Then taking dimensions and applying \Cref{lem:B-haus-dim} gives 
\begin{align*}
\dim T_X \mathring{\partial} N &\le \dim_H \left( \bigcup_\mathcal S B_\mathcal S (\mathring{\partial} QN'_X) \right) = \max_\mathcal S \dim_H B_\mathcal S(\mathring{\partial} QN'_X)  \\
    & \le \max_\mathcal S \dim \mathring{\partial} QN'_X = \dim \mathring{\partial} QN'_X. 
\end{align*}
\end{proof}

We have now assembled the tools that we need to prove the $GL^+(2,\R)$-geodesic property, the main result of this section.  

\begin{prop}
\label{prop:bdy-gl2r-geod}
Suppose $\M_{g,n}$ is a single moduli space, $N\subseteq \M_{g,n}$ a totally geodesic subvariety, and $\mathring{\partial} N$ an irreducible component of the intersection of $\partial N$ with a boundary stratum $\Delta_{\Gamma}$ of $\partial \M_{g,n}$.  Then $\mathring{\partial} N$ is $GL^+(2,\R)$-geodesic.  
\end{prop}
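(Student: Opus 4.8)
The plan is to exhibit a witness $Q$ as in \Cref{defn:gl2r-geod}, with the roles of $N$ and $\M$ there played by $\mathring{\partial} N$ and $\Delta_\Gamma$. The natural candidate is the variety $\mathring{\partial} QN'$ studied in this section: by \Cref{lem:tan-glob-inv} it is an algebraic subvariety of the (quadratic) Hodge bundle lying over $\mathring{\partial} N$, by \Cref{lem:gl2-inv} it is $GL^+(2,\R)$-invariant, and by \Cref{lem:quad-dim} its fibre over a generic $X\in\mathring{\partial} N$ has dimension at least $\dim\mathring{\partial} N$. The only thing preventing $\mathring{\partial} QN'$ itself from being the witness is that it is neither irreducible nor contained in a single stratum $\calQ(\mu)$. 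So the first step is to cut it down to an irreducible piece inside a single stratum while keeping all four required properties.

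To carry out the cutting down, I would set $d:=\dim\mathring{\partial} N$ and decompose $\mathring{\partial} QN'=\bigcup_\mu\bigl(\mathring{\partial} QN'\cap\calQ(\mu)\bigr)$ over the finitely many multi-component strata $\calQ(\mu)\to\Delta_\Gamma$, and then decompose each $\mathring{\partial} QN'\cap\calQ(\mu)$ further into its finitely many irreducible components. For generic $X$ the fibre $(\mathring{\partial} QN')_X$ is the union of the fibres over $X$ of these finitely many irreducible pieces, so \Cref{lem:quad-dim} forces at least one piece to have fibre of dimension $\ge d$ over $X$. A pigeonhole over the (finitely many) pieces, applied over the Zariski-dense locus of generic points of the irreducible variety $\mathring{\partial} N$, then yields a single irreducible component $Q$ of some $\mathring{\partial} QN'\cap\calQ(\mu_0)$ whose fibre over $X$ has dimension $\ge d$ for all $X$ in a Zariski-dense subset of $\mathring{\partial} N$. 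Since $d\ge 0$ these fibres are non-empty, so $Q$ dominates $\mathring{\partial} N$; and since a Zariski-dense set meets the open locus where the fibre dimension of $p|_Q$ equals its generic value, that generic value is $\ge d$, so $\dim Q=d+(\text{generic fibre dimension})\ge 2d$ by the fibre-dimension theorem.

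Then I would check the four conditions of \Cref{defn:gl2r-geod} for this $Q$. The conditions $\overline{p(Q)}=\mathring{\partial} N$ and $\dim Q\ge 2\dim\mathring{\partial} N$ are exactly the dominance and the dimension bound just established. For $GL^+(2,\R)$-invariance: both $\mathring{\partial} QN'$ (by \Cref{lem:gl2-inv}) and the stratum $\calQ(\mu_0)$ are $GL^+(2,\R)$-invariant, hence so is their intersection, and since $GL^+(2,\R)$ is connected it fixes each of the finitely many irreducible components of that intersection, so $Q$ is invariant. For non-variation on the $\kappa_i=\infty$ components of $\mu_0$: every differential in $\calQ(\mu_0)$, and in particular every element of $Q$, vanishes identically on such a component $i$; by the alternative description of $\mathring{\partial} QN'$ in \Cref{lem:tan-glob-inv}, the components on which elements of $\mathring{\partial} QN'$ vanish are precisely the factors on which $\mathring{\partial} N$ is non-varying, so the $i$-th factor is one on which $\mathring{\partial} N$ --- and hence $Q$, which lies over $\mathring{\partial} N$ --- is constant. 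Thus $\mathring{\partial} N$ is $GL^+(2,\R)$-geodesic.

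The hard part will be the extraction in the second paragraph: turning the reducible, multi-stratum variety $\mathring{\partial} QN'$ into a single irreducible subvariety of one stratum that is simultaneously $GL^+(2,\R)$-invariant, dominates $\mathring{\partial} N$, and satisfies $\dim Q\ge 2\dim\mathring{\partial} N$. The delicate point is carrying the fibre-dimension information through the two layers of pigeonhole (first over strata, then over irreducible components) and deducing \emph{dominance} from Zariski density, rather than merely a lower bound on some fibre dimensions; upper semicontinuity of fibre dimension and irreducibility of $\mathring{\partial} N$ are what make this go through. Everything else is just assembling the structural inputs \Cref{lem:tan-glob-inv}, \Cref{lem:gl2-inv}, and \Cref{lem:quad-dim} proved earlier in this section.
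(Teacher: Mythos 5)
Your overall argument is the paper's own: you work with $\mathring{\partial} QN'$, intersect with the finitely many strata, pass to irreducible components, and use \Cref{lem:quad-dim} together with constructibility/semicontinuity of fibre dimension and irreducibility of $\mathring{\partial} N$ to pigeonhole a single component $Q$ whose fibres over a dense set of $X$ have dimension $\ge \dim\mathring{\partial} N$; dominance, the bound $\dim Q\ge 2\dim\mathring{\partial} N$, and the non-varying condition then come out exactly as in the paper (the paper is even terser than you are on the fibre-dimension bookkeeping).

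The one place you genuinely diverge is the $GL^+(2,\R)$-invariance of $Q$, and as written this step is too quick. Your assertion that ``since $GL^+(2,\R)$ is connected it fixes each of the finitely many irreducible components'' is the standard argument for a group acting \emph{algebraically} (or holomorphically) on a variety; here the $GL^+(2,\R)$-action on a stratum is only real-analytic (in period coordinates it is $\R$-linear but not $\C$-linear), so it is not automatic that each $g$ even permutes the \emph{complex-algebraic} irreducible components of $\calQ(\mu_0)\cap\mathring{\partial} QN'$. This can be repaired --- e.g.\ each $g$ acts by a real-analytic diffeomorphism preserving the complex-analytic set, hence preserves its manifold-point locus and therefore permutes irreducible components, after which your connectedness argument (separating a point of $Q$ from the other components) does close it --- but that justification is exactly the content that is missing. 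The paper instead avoids the permutation claim altogether: at a generic $q\in Q$ the component $Q$ locally coincides with the invariant set $\calQ(\mu_0)\cap\mathring{\partial} QN'$, giving $g_tq\in Q$ for small $t$, and \Cref{lem:local-inv} (real-analyticity of orbit maps) upgrades this local $g_t$-invariance to invariance for all $t$; invariance under $r_\theta$ is then extracted from $Q$ being a complex algebraic subvariety, and $g_t,r_\theta$ generate. Either fix is fine, but you should supply one of them rather than cite connectedness alone.
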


\begin{proof}
    Recall that, by \Cref{lem:tan-glob-inv} \eqref{item:alg}, $\mathring{\partial} QN'$ is algebraic, hence its intersection $\calQ(\mu)\cap \mathring{\partial} QN'$ with any stratum $\calQ(\mu)$ of the Hodge bundle boundary $\partial \calQ\overline{\calM}_{g,n}$ is also algebraic.  
    Let $Q_1,\ldots, Q_n$ be the (finitely many) irreducible components of $\mathcal Q(\mu)\cap \mathring{\partial} QN'$, ranging over the finitely many strata $\mathcal Q(\mu)$.  We will eventually choose $Q$ to be one of these $Q_i$.

    By \Cref{lem:quad-dim}, for generic $X\in \mathring{\partial} N$, we get $\dim \mathring{\partial} QN'_X \ge \dim T_X \mathring{\partial} N$
    and hence there exists some $i=i(X)$ such that $\dim (\mathring{\partial} QN'_X\cap Q_i) \ge \dim \mathring{\partial} N.$  
    
    Let 
    $$E_i := \{X\in \mathring{\partial} N: \dim (\mathring{\partial} QN'_X\cap Q_i)\ge \dim \mathring{\partial} N\}.$$ 
    By the previous paragraph $\mathring{\partial} N = \overline{E_1 \cup \cdots \cup E_n}$. Each $E_i$ is a constructible set, since the dimension of an algebraic set varies semi-continuously.  Now since $\mathring{\partial} N$ is an irreducible variety, $\mathring{\partial} N = \overline{E_i}$ for some $i$.  Pick some such $i$, and define $Q$ to be $Q_i$.  It has the property that $\dim (\mathring{\partial} QN'_X\cap Q) \ge \dim \mathring{\partial} N$ for generic $X\in \mathring{\partial} N$.  Since this dimension is non-zero, in particular we get that $\overline{p(Q)}=\mathring{\partial} N$, which is property \eqref{item:proj} of $GL^+(2,\R)$-geodesic.  We also get that $\dim Q \ge 2\dim \mathring{\partial} N$, property \eqref{item:dimQ}.  

    For property \eqref{item:non-vary}, first note that by the definition of $\mathring{\partial} QN'$, if the set $\mathcal Q(\mu) \cap \mathring{\partial} QN'$ is non-empty then for all factors where the differentials in $\calQ(\mu)$ are zero,
    it must be that $\mathring{\partial} N$ does not vary along that component.  Hence $Q$ does not vary along that component either, and we're done.

    For \eqref{item:gl2-inv}, we begin by applying \Cref{lem:gl2-inv}, which gives that $\mathring{\partial} QN'$ is $GL^+(2,\R)$-invariant.  Since each $\calQ(\mu)$ is $GL^+(2,\R)$-invariant, we get that $\calQ(\mu) \cap \mathring{\partial} QN'$ is as well.  Now $Q$ is some irreducible component of this variety, so for a generic point $q\in Q$, we have that $Q$ locally agrees with $\calQ(\mu) \cap \mathring{\partial} QN'$ within $\calQ(\mu)$.  Since $\calQ(\mu) \cap \mathring{\partial} QN'$ is $g_t$-invariant for all $t\in \R$, we get that for $t$ sufficiently small, $g_tq\in Q$.  Applying \Cref{lem:local-inv} gives that $Q$ is $g_t$ invariant for all $t\in \R$.  Invariance of $Q$ under $r_\theta$, for any $\theta\in S^1$, is immediate from $Q$ being a complex algebraic subvariety.  Since such $g_t, r_\theta$ generate $GL^+(2,\R)$, we get that $Q$ is $GL^+(2,\R)$-invariant.

\end{proof}

\section{$\GL^+(2,\RR)$-geodesic implies totally geodesic}
\label{sec:gl2r-geo-tot-geo}

The notion of $GL^+(2,\R)$-geodesic for a subset of multi-component moduli space $\M$ was introduced above in \Cref{sec:gl2r-geod}.  In this section, we show that this property implies infinitesimally totally geodesic, and then we combine with results from \Cref{sec:tot-geod} and  \Cref{sec:bdy-gl2r-geod} to prove the main result of the paper, \Cref{thm:bdy-tot-geod}.

\subsection{$\GL^+(2,\RR)$-geodesic implies infinitesimally totally geodesic}
The definition of $GL^+(2,\R)$-geodesic gives a large dimension set of quadratic differentials generating geodesics tangent to $N$.  However, what we really want in order to show the totally geodesic property is that the set of such tangent vectors has large dimension.  In multi-component moduli spaces, many different quadratic differentials can give rise to the same tangent vector, since rescaling the factors by different real numbers still results in the same \TM geodesic.  

To deal with this issue, we first consider the case of prime subvarieties (recall this notion was defined in \Cref{sec:prime}).  For these, we can apply the fact that ratios of areas are constant, \Cref{prop:ratio-areas-qd}, which allows us to rule out having too many quadratic differentials generating the same \TM geodesic.

\begin{defn}
    We say a geodesic in $\M$ (in the sense of \Cref{defn:geod}) is \emph{single-speed} if all the rescaling factors $c_i$ are the same. 
\end{defn}

Note that such a geodesic moves at the same speed in every factor of the product cover of $\M$.  

\begin{lemma}\label{lem:gl-single-speed}
Suppose $N\subseteq\calM$ is \emph{prime} and a $\GL^+(2,\RR)$-geodesic subvariety.  Then for a generic set of $X\in N$, for every $v\in T_XN$, there is a single-speed geodesic in $N$ and tangent to $v$.  In particular $N$ is infinitesimally totally geodesic.  
\end{lemma}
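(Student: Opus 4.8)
The plan is to use the witness $Q$ from the $GL^+(2,\R)$-geodesic property to produce single-speed geodesics lying in $N$, and then, via the constant-area-ratio theorem \Cref{prop:ratio-areas-qd}, to show that enough of these occur to realize every tangent direction of $N$. If $\dim N=0$ the statement is vacuous, so assume $\dim N\ge 1$. Since $N$ is prime it has no factor that is a single point; hence in the stratum $\calQ(\mu)$, $\mu=(\kappa_1,\dots,\kappa_m)$, containing $Q$, no $\kappa_i$ equals $\infty$ (otherwise, by property~(iv) of \Cref{defn:gl2r-geod}, $Q$, and so $N=\overline{p(Q)}$, would be constant on that factor, contradicting primality). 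Thus $\calQ(\mu)$ is a stratum of multi-component translation surfaces, every element of $Q$ has all components nonzero, and $GL^+(2,\R)$ acts componentwise in the usual way. Likewise a product decomposition of $Q$ over complementary factors would push forward to one of $N$, so $Q$ is prime; applying \Cref{prop:ratio-areas-qd} to the irreducible components of its constant-area loci (and using that $Q$ is invariant under real rescaling of differentials) the ratios $\|q^i\|/\|q^j\|$ are constant on $Q$.

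For $q\in Q$ the path $\gamma_q\colon t\mapsto p(g_tq)$ lies in $N$, since $p(g_tq)\in p(Q)\subseteq N$, and it is a single-speed geodesic: on each factor it is the Teichm\"uller geodesic generated by the nonzero differential $q^i$, hence an isometric embedding for a rescaling of the Teichm\"uller metric by a factor independent of $i$. Its initial velocity $\gamma_q'(0)$ is a fixed positive multiple of the balanced vector $b(\phi(q))$, which lies in $T_XN$ with $X=p(q)$. The constant-area-ratio property pins down the fibres of $q\mapsto b(\phi(q))$ on $Q_X:=Q\cap p^{-1}(X)$: if $b(\phi(q'))=b(\phi(q))$ then $q'^{\,i}=t_iq^i$ with $t_i>0$, and constancy of $\|q^i\|/\|q^j\|$ forces all $t_i$ equal, so $q'$ is a positive real multiple of $q$. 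Hence, for generic $X$,
\[
\dim_{\R}\bigl\{\,b(\phi(q)):q\in Q_X\,\bigr\}=\dim_{\R}Q_X-1=2\bigl(\dim_{\C}Q-\dim_{\C}N\bigr)-1\ \ge\ 2\dim_{\C}N-1 .
\]
Since these vectors lie in the ``polytorus'' $\{v:\|v^i\|=1\text{ for all }i\}$, taking positive real multiples raises the dimension by one, and using $b(\phi(r_\theta q))=e^{-2i\theta}b(\phi(q))$ one sees $\bigcup_{q\in Q_X}\C\,b(\phi(q))=\R_{>0}\cdot\{b(\phi(q)):q\in Q_X\}$; this set therefore has full real dimension $2\dim_{\C}N$ inside the irreducible variety $T_XN$ (in which it is contained, as each $b(\phi(q))\in T_XN$), hence is dense in $T_XN$. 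As all its members are balanced and ``balanced'' is a closed real-analytic condition, $T_XN$ consists entirely of balanced vectors.

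Now fix a generic $X$ and a nonzero $v\in T_XN$. By the density just shown, choose $r_n>0$ and $q_n\in Q_X$ with $r_n\,b(\phi(q_n))\to v$. The geodesic tangent to $r_n\,b(\phi(q_n))$ has the same image as $\gamma_{q_n}$ (its initial velocity is a positive real multiple of $\gamma_{q_n}'(0)$), so it lies in $N$; since geodesics depend continuously on their initial data (\Cref{lem:converg}, equivalently \Cref{lem:exp-homeo}) and $N$ is closed, the geodesic tangent to $v$ lies in $N$, and it is single-speed because $v$ is balanced. This is the asserted statement; taking $N'$ to be the dense Zariski-open locus of such generic $X$, it follows that for every $v\in TN'$ the geodesic tangent to $v$ lies in $N$, i.e.\ $N$ is infinitesimally totally geodesic.

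The main obstacle is the dimension count in the second paragraph: the hypothesis $\dim Q\ge 2\dim N$ is exactly what is needed, but only once \Cref{prop:ratio-areas-qd} is invoked to reduce the fibres of $q\mapsto b(\phi(q))$ to dimension one — without it many quadratic differentials generate the same Teichm\"uller geodesic and the count collapses. A secondary difficulty is upgrading ``a dense set of tangent directions'' to ``every tangent direction'' (which rests on continuity of geodesics in the initial data and closedness of $N$, and on confirming via the constant area ratios that all tangent vectors to $N$ are balanced, so that ``single-speed'' is the appropriate notion); and at the outset one must check that $Q$, or the relevant irreducible component, really is prime and sits in a stratum of multi-component translation surfaces so that \Cref{prop:ratio-areas-qd} genuinely applies.
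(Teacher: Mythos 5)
Your overall strategy is the same as the paper's: use the witness $Q$ to produce single-speed geodesics $\gamma_q(t)=p(g_tq)$ in $N$, then invoke \Cref{prop:ratio-areas-qd} to show that distinct $q\in Q_X$ (up to real scaling) give distinct tangent directions, so that the dimension hypothesis $\dim Q\ge 2\dim N$ yields enough directions to cover $T_XN$. The reduction to a stratum of translation surfaces is handled slightly differently --- you argue that primality and $\dim N\ge 1$ rule out $\kappa_i=\infty$ altogether, whereas the paper simply removes such factors --- but both variants are fine.

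There is, however, a genuine gap in the passage from ``full dimension'' to ``dense.'' You compute that $S:=\R_{>0}\cdot\{b(\phi(q)):q\in Q_X\}$ has real Hausdorff dimension at least $2\dim_\C N$ and conclude that, being a subset of $T_XN$ (which has real dimension $2\dim_\C N$), $S$ is dense. This inference is false: a closed half-space, or a fat Cantor set times $\R$, has full Hausdorff dimension inside a vector space yet is nowhere dense. Your set is the image of a continuous map, not a constructible or algebraic set, so you cannot appeal to Zariski density either. What is actually needed is that $S$ (or its analogue) is \emph{open}. The paper gets openness by defining $\overline\psi$ on the closure $\overline{Q_X}$ (a smooth variety for generic $X$), proving $\overline\psi$ is continuous and injective via \Cref{prop:ratio-areas-qd} (\Cref{claim:psi-cts}, \Cref{claim:psi-inj}), and then applying \emph{invariance of domain}. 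Once the image is open, the paper uses $\R$-homogeneity of $\overline\psi$ together with $0\in\mathrm{image}$ (coming from the zero differential in $\overline{Q_X}$) to conclude the image is all of $T_XN$. In your framework the same repair works: the map $(r,q)\mapsto r\,b(\phi(q))$ descends to a continuous injective map from a manifold of real dimension $\dim_\R Q_X$ into $T_XN$; invariance of domain then forces $\dim_\R Q_X=2\dim_\C N$ and gives an open image, and homogeneity plus the accumulation at $0$ yields surjectivity, hence density. Your concluding limiting argument (approximating $v$ by $v_n=r_n b(\phi(q_n))$, using continuity of geodesics in their initial data and closedness of $N$) is correct once density is available; the paper's route avoids this limiting step by directly obtaining that $\overline\psi$ is onto $T_XN$.
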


\begin{proof} 
For notational simplicity, we will assume that $\M$ is a product (otherwise, we can pass to its product cover).  Let $Q\subset \mathcal \calQ(\mu)$, $\mu=(\kappa_1,\ldots,\kappa_m)$, be the witness that $N$ is $GL^+(2,\R)$-geodesic.  
Since $Q$ is non-varying on any component $i$ for which $\kappa_i=\infty$, by removing these factors, we can regard $Q$ as a subvariety of a stratum $\mathcal Q (\tilde \mu)$ of multi-component \textit{translation} surfaces, i.e. none of the components of $\tilde \mu$ are $\infty$ (recall \Cref{sec:strata}).  From this, one gets a notion of prime, which $Q$ must satisfy, since a product decomposition of it would also yield a product decomposition of $N$. 

Let $X$ be a generic point of $N$, and let $Q_X$ denote the subset of $Q$ lying over $X$. Consider the map 
\begin{align}
    &\psi: Q_X\to T_XN, \\
    &\psi(q) = \frac{d}{dt} g_{t\|q\|_1}(q) |_{t=0}, \label{eq:psi}
\end{align}
i.e. $\psi(q)$ is the tangent vector to the path determined by $q$, moving at speed $\|q\|_1$ in every factor (recall $\|q\|_1$ is the area norm, given by the sum of the areas of the components). This tangent vector lies in $T_XN$ because $Q$ is $GL^+(2,\R)$-invariant, and $p(Q)\subset N$. (Recall that the $GL^+(2,\R)$-action is defined as the identity on any component where the the differential vanishes identically, and hence on these components $\psi(q)$ will be the zero vector). 

The relevance of the map $\psi$ is that for every $v\in \psi(Q),$ there is a single-speed geodesic lying in $N$ tangent to $v$. %
We would like to show that $\psi(Q)$ is large, and in particular, dense in $T_XN$.

Now consider the closure $\overline{Q_X}$ of $Q_X$ in the bundle $\calQ \M$.  For generic $X$, $\overline{Q_X}$ will be a smooth variety.   We can extend $\psi$ to a map on $\overline \psi$ on $\overline{Q_X}$, using the same formula as above.  A priori, $\overline \psi$ takes values in $T_X \M$.

We would like to show that $\overline{\psi}$ is continuous.  We have to be careful because $\psi$ extends naturally to a map on all of $\calQ\M$, but it is \textit{not} continuous there -- consider a sequence of unit area differentials that are non-vanishing on all components converging to a differential that vanishes on exactly one component.  In the claim below, we will use the prime property to rule this behavior out on $\overline{Q_X}$.

\begin{claim}
\label{claim:psi-cts}
    The map $\overline \psi$ is continuous, and takes values in $T_XN$.  
\end{claim}

\begin{proofclaim}
    To show continuity, we consider any sequence $q_n\to q$, with $q_n,q\in \overline{Q_X}$.  
    
    First consider the case $\|q\|_1=0$, i.e. $q$ is zero on every component of $X$.  Since area varies continuously on $\calQ \M$, we see that $\|q_n\|_1\to 0$, and thus $\overline\psi(q_n) \to 0=\overline \psi (q).$  So this case is complete.  

    Now consider the case $\|q\|_1 \ne 0$.  By the homogeneity properties of $\overline{Q_X}$ and $\overline \psi$, it suffices to assume that $\|q\|_1=\|q_n\|_1=1$, for all $n$. %
    Now all these $q_n,q$ are limits of unit norm elements of $Q_X$. %
    By \Cref{prop:ratio-areas-qd} (applied in $\calQ (\tilde \mu)$), the ratios of areas of different components of differentials in the unit area locus of $Q$ are constant.  Since area varies continuously, all the $q_n,q$ also have these same ratios of areas.  In particular, each $q_n,q$ vanishes on some component of $X$ iff the differentials in the stratum $\calQ(\mu)$ do.  For such sequences, convergence of the values $\overline \psi(q_n)$ can be seen as in the proof of \Cref{lem:homeo} giving continuity of $\phi$ (which is related to $\overline\psi$ by rescaling by area norm).  We have thus completed this case, and hence we conclude that $\overline \psi $ is continuous.  

    Finally, since $\overline \psi$ is continuous and the values of $\psi$ lie in the closed subset $T_XN$ of $T_X\M$, it follows that values of $\overline \psi$ also lie in $T_XN$.  
\end{proofclaim}

    The claim above allows us to think of $\overline{\psi}$ as a map $\overline{Q_X} \to T_XN$, which we will do for the remainder of the proof.

\begin{claim}
\label{claim:psi-inj}
    The map $\overline\psi$ is injective.  
\end{claim}

\begin{proofclaim}
    Suppose that $q_1,q_2 \in \overline{Q_X}$ with $\psi(q_1)=\psi(q_2)$.  Let $q_j=(q_j^1,\ldots,q_j^k)$ for $j=1,2$, and $\psi(q_1)=\psi(q_2)=(v^1,\ldots,v^k)$.   Note that for each $i$, $q_1^i$ and $q_2^i$ are equal up to some constant real multiple (cf. \Cref{subsec:phi}).  Also, for $j=1,2$ and any $i$ such that $q_j^i\ne 0$, we have $\|v^i\|=\|q_j\|_1$  By considering such an $i$, we see that $\|q_1\|_1 = \|q_2\|_1$, i.e.  they have the same total area.

    Each $q_j$ is a limit of differentials in $Q$, and, by rescaling, we can take these differentials to also have norm $\|q_1\|_1 = \|q_2\|_1$.  Now by \Cref{prop:ratio-areas-qd} (applied in $\calQ (\tilde \mu)$), the ratios of areas of different components of differentials in each constant area locus of $Q$ are constant. %
    So by continuity of area, it then follows that $\|q_1^i\| = \|q_2^i\|$ for all $i$.  Since we also know that $q_1^i$ and $q_2^i$ are equal up to some constant real multiple, we now get that this multiple must be $1$.  So $q_1^i=q_2^i$ for all $i$, i.e. $q_1=q_2$, as desired.  

\end{proofclaim}

Using the irreducibility of $Q$, $\dim Q \ge 2\dim N$, and $\overline{p(Q)}=N$, we get $\dim Q_X\ge \dim N$ for generic $X$.  Thus $\overline \psi$ is a continuous (\Cref{claim:psi-cts}) and injective (\Cref{claim:psi-inj}) map from $\overline{Q_X}$, which is a smooth variety of dimension at least $\dim N$, to $T_XN$.  By invariance of domain, its image must be open.  Now the quadratic differential on $X$ that is zero on all factors is contained in $\overline{Q_X}$, and hence the zero vector is in the image of $\overline{\psi}$.  Since $\overline \psi$ is homogeneous with respect to $\R$ scaling, its image must be similarly homogeneous.  Since the image is also open and contains $0$, it must in fact be all of $T_XN$.  

Since $\overline{Q_X}-Q_X$ has smaller dimension than $Q_X$, we deduce from the above that $\psi(Q_X)$ is topologically dense in $T_XN$.  As discussed above, all the vectors in this image are tangent to single-speed geodesics lying in $N$.  Since the set of vectors with this property is topologically closed (since $N$ is closed), we've proved the desired result.

\end{proof}

We now want to relax the condition that $N$ be prime.  We will decompose into prime factors.  We need to understand how the witness to the $GL^+(2,\R)$ property decomposes.  For this we will use the following.  

\begin{lemma}
\label{lem:dim-upper-bound} 
Suppose $Q\subseteq \calQ(\mu)$, $\mu=(\kappa_1,\ldots, \kappa_m)$, is a $\GL^+(2,\RR)$-invariant, irreducible, algebraic set, and that $Q$ does not vary on any component $i$ for which $\kappa_i=\infty$.   Let  $N = \overline{p(Q)}$. Then
\[
\dim Q \leq 2\dim N.
\]
\end{lemma}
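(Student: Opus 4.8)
The plan is to deduce the bound from a dimension count on the fibers of the projection $p|_Q\colon Q\to N$. Since $Q$ is irreducible and $N=\overline{p(Q)}$ is its (irreducible) closure, $p|_Q$ is a dominant morphism, so for a generic $X\in N$ the fiber $Q_X:=p^{-1}(X)\cap Q$ has dimension exactly $\dim Q-\dim N$. Thus it suffices to show $\dim Q_X\le\dim N$ for generic $X$, and for this I would construct a continuous injective map $Q_X\to T_XN$ and invoke invariance of domain. This runs the mechanism in the proof of \Cref{lem:gl-single-speed} in the opposite direction: there the map $\psi$ (and its extension $\overline\psi$) was used to push quadratic differentials forward onto tangent directions and show $N$ contains many geodesics; here the same map bounds how large the fiber $Q_X$ can be.

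First I would carry out two reductions. If some $\kappa_i=\infty$, then by hypothesis $Q$ (hence also $N$) does not vary in the $i$-th factor, so deleting all such factors changes neither $\dim Q$ nor $\dim N$ and leaves $Q$ a $GL^+(2,\R)$-invariant, irreducible subvariety of a stratum $\calQ(\tilde\mu)$ of multi-component translation surfaces, with no $\infty$ entries in $\tilde\mu$. Next, decompose $Q$ into a product of prime factors $Q=Q_1\times\cdots\times Q_r$ with $Q_j\subseteq\calQ(\tilde\mu_j)$. Because the $GL^+(2,\R)$-action on $\calQ(\tilde\mu)$ is diagonal, each $Q_j$ is $GL^+(2,\R)$-invariant and irreducible; moreover $N=\overline{p(Q)}=N_1\times\cdots\times N_r$ with $N_j=\overline{p(Q_j)}$, so $\dim Q=\sum_j\dim Q_j$ and $\dim N=\sum_j\dim N_j$. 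Hence it is enough to prove $\dim Q_j\le 2\dim N_j$ for each $j$, and we may assume from now on that $Q$ is prime and lies in a stratum of multi-component translation surfaces.

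Now fix a generic $X\in N$, meaning a smooth point of $N$ lying in $p(Q)$ and over which $Q_X$ has dimension $\dim Q-\dim N$ (such $X$ form a dense subset, using Chevalley's theorem and upper semicontinuity of fiber dimension). Following \eqref{eq:psi}, define $\psi\colon Q_X\to T_X\M$ by $\psi(q)=\tfrac{d}{dt}\big|_{t=0}p\!\left(g_{t\|q\|_1}q\right)$. Since $Q$ sits inside a stratum of translation surfaces, no component of any $q\in Q$ is identically zero, so $\psi(q)^i=\|q\|_1\,\bar q^i/|q^i|$ is nonzero on every factor, and $\psi$ is continuous by the same argument (via \Cref{lem:homeo}) that shows $\phi$ is continuous --- here we avoid the subtlety with vanishing components that forces the use of the prime hypothesis in \Cref{claim:psi-cts}. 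For $q\in Q_X$ the path $t\mapsto p(g_{t\|q\|_1}q)$ lies in $p(Q)\subseteq N$ by $GL^+(2,\R)$-invariance, passes through $X$ at $t=0$, and is smooth, so $\psi(q)\in T_XN$. Injectivity of $\psi$ is the argument of \Cref{claim:psi-inj}: if $\psi(q_1)=\psi(q_2)$, comparing norms on any factor gives $\|q_1\|_1=\|q_2\|_1$; constancy of the ratios of areas of factors on each constant-area locus of the prime variety $Q$ (\Cref{prop:ratio-areas-qd}) then gives $\|q_1^i\|=\|q_2^i\|$ for all $i$; and since each pair $q_1^i,q_2^i$ consists of positive real multiples of one another (cf.\ \Cref{subsec:phi}) that multiple must be $1$, so $q_1=q_2$. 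Restricting the continuous injection $\psi$ to the smooth locus of a top-dimensional irreducible component of $Q_X$ --- a complex manifold of real dimension $2\dim_\C Q_X$ --- invariance of domain inside $T_XN\cong\R^{2\dim_\C N}$ yields $\dim_\C Q_X\le\dim_\C N$, and therefore $\dim Q-\dim N=\dim Q_X\le\dim N$, i.e.\ $\dim Q\le 2\dim N$.

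The only step requiring real work is the injectivity of $\psi$, and that is exactly where the prime hypothesis is indispensable: without constancy of area ratios one could rescale the components of a fixed differential by independent positive reals subject only to fixing the total area and remain inside a non-prime $Q$, producing positive-dimensional fibers of $\psi$ and breaking the bound --- consistently with the fact that $\dim Q\le 2\dim N$ can genuinely fail for non-prime $Q$ embedded non-diagonally in a product.
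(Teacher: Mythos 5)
Your proposal is correct and follows essentially the same route as the paper: reduce to a stratum of multi-component translation surfaces, split into prime factors, and in the prime case use the map $\psi$ of \eqref{eq:psi} — continuous and injective by \Cref{prop:ratio-areas-qd} — together with invariance of domain to bound $\dim Q_X$ by $\dim N$, then conclude by a fiber-dimension count. The only cosmetic difference is that you invoke the generic-fiber dimension formula where the paper bounds $\dim p^{-1}(N^{sm})$ by $\dim N + \max_X \dim Q_X$, which is the same computation.
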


\begin{proof}

From the non-varying assumption, we can regard $Q$ as a subvariety of a stratum $\calQ(\tilde \mu)$ of multi-component translation surfaces, i.e. none of the components of $\tilde \mu$ are $\infty$.  And $Q$ is prime in $\calQ (\tilde \mu)$ iff it is prime in $\calQ (\mu)$.  

We first consider the case when $Q$ is prime.  Let $X$ be a smooth point of $N$.  
Consider the map $\psi: Q_X\to T_XN$ defined in \eqref{eq:psi} in proof of \Cref{lem:gl-single-speed}.  As discussed there, $\psi$ is continuous and injective (using that $Q$ is prime in $\calQ (\tilde \mu)$).

It follows that $\dim Q_X \le \dim T_X N = \dim N$, for $X\in N^{sm}$ a smooth point.  Now
\[
\dim p^{-1}(N^{sm}) \le \dim N^{sm} + \max_{X\in N^{sm}} \dim Q_X \le \dim N + \dim N.
\]
Now since $Q=p^{-1}(N)$ is irreducible, it equals the closure of $p^{-1}(N^{sm})$.  Hence we get $\dim Q \le 2 \dim N$, so we are done in the case $Q$ prime.

Now if $Q$ is not prime, then $Q$ is a product of primes $Q=Q_1 \times \cdots \times Q_k$. We see that each $Q_i$ is $GL^+(2,\R)$-invariant, irreducible, algebraic, and does not vary on any component for which $\kappa_i=\infty$. Set $N_i = \overline{p(Q_i)}$. Then $N=N_1\times \cdots \times N_k$ and, by the result for primes,
\[
\dim Q = \dim Q_1 + \cdots + \dim Q_k \leq 2\dim N_1 + \cdots + 2\dim N_k =2\dim N,
\]
and we are done.

\end{proof}

Using the above, we next show that the prime factors are also $GL^+(2,\R)$-geodesic.  

\begin{prop}
\label{prop:prod-gl2r}
Let $N \subset \M_{g_1,n_1} \times \cdots \times \M_{g_k,n_k}$ be a $GL^+(2,\R)$-geodesic subvariety.  Suppose $N = N_1 \times N_2$, where $N_1\subset \M_{g_1,n_1} \times \cdots \times \M_{g_s,n_s}$ and $N_2\subset \M_{g_{s+1},n_{s+1}} \times \cdots \times \M_{g_k,n_k}$, for some $k$.  Then each $N_i$ is also $GL^+(2,\R)$-geodesic.  
\end{prop}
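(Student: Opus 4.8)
The plan is to obtain witnesses for $N_1$ and $N_2$ by projecting a witness for $N$. Let $Q \subseteq \calQ(\mu)$, $\mu=(\kappa_1,\ldots,\kappa_k)$, be a witness that $N$ is $GL^+(2,\R)$-geodesic. Since $\M$ is a product, $\calQ(\mu)=\calQ(\mu')\times\calQ(\mu'')$, where $\mu'=(\kappa_1,\ldots,\kappa_s)$ and $\mu''=(\kappa_{s+1},\ldots,\kappa_k)$; let $\pi_1,\pi_2$ be the two coordinate projections and define $Q_1$ to be the closure of $\pi_1(Q)$ inside the stratum $\calQ(\mu')$, and $Q_2$ similarly inside $\calQ(\mu'')$. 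The claim is that $Q_1$ witnesses that $N_1$ is $GL^+(2,\R)$-geodesic, and symmetrically $Q_2$ for $N_2$; one verifies the four conditions of \Cref{defn:gl2r-geod}.

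First the conditions not involving dimension. $Q_1$ is irreducible, being the closure of the image of the irreducible variety $Q$ under a morphism, and it is algebraic by Chevalley's theorem. The projection $\pi_1$ is $GL^+(2,\R)$-equivariant — immediate from the componentwise definition of the action, since applying a matrix to the non-vanishing components and the identity to the vanishing ones commutes with forgetting a block of factors — so $\pi_1(Q)$ is $GL^+(2,\R)$-invariant; since the action on the single stratum $\calQ(\mu')$ is continuous, its closure $Q_1$ is $GL^+(2,\R)$-invariant, which is \eqref{item:gl2-inv} (alternatively one bootstraps from generic small-$t$ invariance via \Cref{lem:local-inv}, exactly as in the proofs of \Cref{lem:gl2-inv} and \Cref{prop:bdy-gl2r-geod}). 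Writing $p=p'\times p''$ for the forgetful map split along the two blocks of factors and $\rho$ for the corresponding projection of $\M$, we have $\rho\circ p=p'\circ\pi_1$, so $\overline{p'(Q_1)}=\overline{p'(\pi_1(Q))}=\overline{\rho(p(Q))}=\overline{\rho(N)}=N_1$, which is \eqref{item:proj}. Finally, if $\kappa_i=\infty$ for some $i\le s$ then $Q$ does not vary on that component, hence neither does $\pi_1(Q)$ nor $Q_1$; this is \eqref{item:non-vary}.

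The remaining condition \eqref{item:dimQ}, $\dim Q_1\ge 2\dim N_1$, is the only point with real content. On the one hand $Q\subseteq \pi_1(Q)\times\pi_2(Q)\subseteq Q_1\times Q_2$, so $\dim Q\le \dim Q_1+\dim Q_2$. On the other hand, $Q_1$ and $Q_2$ each satisfy the hypotheses of \Cref{lem:dim-upper-bound} — each is $GL^+(2,\R)$-invariant, irreducible, algebraic, non-varying on its $\infty$-components, and has $\overline{p(Q_i)}=N_i$ by the previous paragraph — so $\dim Q_i\le 2\dim N_i$. Combined with the hypothesis $\dim Q\ge 2\dim N=2\dim N_1+2\dim N_2$ this yields
\[
2\dim N_1 + 2\dim N_2 \;\le\; \dim Q \;\le\; \dim Q_1 + \dim Q_2 \;\le\; 2\dim N_1 + 2\dim N_2,
\]
forcing equality throughout; in particular $\dim Q_1=2\dim N_1$ and $\dim Q_2=2\dim N_2$. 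Hence each $Q_i$ is a witness and each $N_i$ is $GL^+(2,\R)$-geodesic.

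The one step that requires care is the $GL^+(2,\R)$-invariance of the Zariski closure $Q_1$: the action on the full quadratic Hodge bundle is not continuous, so one must either work inside the single stratum $\calQ(\mu')$, where it is continuous, or pass through \Cref{lem:local-inv} as done elsewhere in the paper. Everything else — equivariance of $\pi_1$, Chevalley's theorem, irreducibility of closures of images, the product dimension bound $\dim Q\le\dim Q_1+\dim Q_2$, and the sandwich via \Cref{lem:dim-upper-bound} — is routine.
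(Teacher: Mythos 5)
Your proposal is correct and takes essentially the same approach as the paper: project the witness $Q$ to each block of factors, take closures, verify conditions \eqref{item:gl2-inv}--\eqref{item:non-vary} directly, and obtain the dimension bound \eqref{item:dimQ} by sandwiching $\dim Q \le \dim Q_1 + \dim Q_2$ against the hypothesis $\dim Q \ge 2\dim N$ and \Cref{lem:dim-upper-bound}. The only (welcome) difference is that you flag the subtlety in showing $GL^+(2,\R)$-invariance of the Zariski closure of the projection, a point the paper passes over with ``follow immediately from the corresponding properties of $Q$.''
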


\begin{proof}
    Let $Q$ be a witness that $N$ is $GL^+(2,\R)$-geodesic.  Let $\rho_i: N\to N_i$ be the projection.  We claim that $\overline{\rho_i(Q)}$ is a witness that $N_i$ is $GL^+(2,\R)$-geodesic.  First note that it is irreducible, as it's the (closure of the) image under an algebraic map of an irreducible algebraic set.  

    Properties \eqref{item:gl2-inv}, \eqref{item:proj}, \eqref{item:non-vary} for $\rho_i(Q)$ follow immediately from the corresponding properties of $Q$.  

    For property \eqref{item:dimQ}, since $Q\subset \rho_1(Q) \times \rho_2(Q)$ and using property \eqref{item:dimQ} for $Q$, we get
    \begin{align}
        \dim \rho_1(Q) + \dim \rho_2(Q) \ge \dim Q \ge 2\dim N = 2\dim N_1 + 2 \dim N_2.
    \end{align}
    Now by \Cref{lem:dim-upper-bound}, $\dim \rho_i(Q) \le 2 \dim N_i$ for each $i$.  Combined with the above, this implies that in fact $\dim \rho_i(Q) = 2 \dim N_i$ for each $i$, which is property \eqref{item:dimQ}.  
\end{proof}

Finally, we can use the above to relax the condition that $N$ be prime.  

\begin{lemma}\label{lem:gl-to-inf}
Suppose $N\subseteq\calM$ is any $\GL^+(2,\RR)$-geodesic subvariety. Then $N$ is infinitesimally totally geodesic.
\end{lemma}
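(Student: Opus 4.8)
The plan is to reduce the statement to the prime case, Lemma \ref{lem:gl-single-speed}, by decomposing $N$ into prime factors. As in the proof of Lemma \ref{lem:gl-single-speed}, I may assume that $\M = \M_{g_1,n_1}\times\cdots\times\M_{g_k,n_k}$ is an honest product of moduli spaces: otherwise one passes to the product cover, to which a witness $Q\subseteq\calQ(\mu)$ that $N$ is $GL^+(2,\R)$-geodesic lifts. Indeed, the quotient map $\calQ(\mu)\to\M$ by the finite group of \Cref{foot:quotient} is finite and $GL^+(2,\R)$-equivariant, so an appropriately chosen irreducible component of the pullback of $Q$ inherits $GL^+(2,\R)$-invariance, the bound $\dim Q\ge 2\dim N$, the surjectivity $\overline{p(Q)}=N$, and the non-varying condition; and being infinitesimally totally geodesic descends from the lift to $N$ because the finite covering of moduli spaces carries geodesics to geodesics and dense Zariski-open subsets to dense constructible ones.

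Next I would progressively decompose $N = N_1\times\cdots\times N_r$ into a product of prime factors, with each $N_j\subseteq\M_j$ where $\M_j$ is a product of some of the $\M_{g_i,n_i}$; this terminates because $k$ is finite, and each $N_j$ is irreducible, since a product of complex varieties is irreducible exactly when every factor is. Applying Proposition \ref{prop:prod-gl2r} to the splittings occurring in this decomposition shows that each $N_j$ is $GL^+(2,\R)$-geodesic, and then, since $N_j$ is both prime and $GL^+(2,\R)$-geodesic, Lemma \ref{lem:gl-single-speed} yields a dense Zariski-open $N_j'\subseteq N_j$ such that for every $v\in TN_j'$ the geodesic tangent to $v$ lies entirely in $N_j$.

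Finally I would reassemble. Put $N':= N_1'\times\cdots\times N_r'$, a dense Zariski-open subset of $N$. For $X=(X_1,\ldots,X_r)\in N'$, the product structure of the tangent space lets one write any $v\in T_XN'$ as $v=(v_1,\ldots,v_r)$ with $v_j\in T_{X_j}N_j'$; by Definition \ref{defn:geod} the geodesic $\gamma$ tangent to $v$ is the tuple $(\gamma_1,\ldots,\gamma_r)$, where $\gamma_j$ is the geodesic in $\M_j$ tangent to $v_j$, which by the previous step lies in $N_j$. Hence $\gamma$ lies in $N_1\times\cdots\times N_r=N$, so $N$ is infinitesimally totally geodesic.

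The substance of the proof is in Lemma \ref{lem:gl-single-speed} and Proposition \ref{prop:prod-gl2r}; the only point needing a little care is the compatibility of the three notions in play — prime, $GL^+(2,\R)$-geodesic, and infinitesimally totally geodesic — with passage to the product cover, after which the reassembly step is immediate because Teichm\"uller geodesics in a product are tuples of Teichm\"uller geodesics in the factors.
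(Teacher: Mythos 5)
Your proof is correct and follows essentially the same route as the paper's: pass to the product cover, decompose into prime factors, apply Proposition \ref{prop:prod-gl2r} to see each factor is $GL^+(2,\R)$-geodesic, invoke Lemma \ref{lem:gl-single-speed} to conclude each factor is infinitesimally totally geodesic, and observe that this property is preserved by taking products. The extra detail you give on why the witness and the infinitesimally-totally-geodesic property transfer along the finite product cover is sound, though the paper treats these steps as immediate.
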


\begin{proof}

Note that the product cover of $N$ is also $GL^+(2,\R)$-geodesic (a witness can be lifted to quadratic differentials over the product cover).  We begin by decomposing this product cover as 
\[
\prod_i N_i
\]
where $N_i$ are prime.  By \Cref{prop:prod-gl2r} (and analogous versions of it in which the components are reordered), each $N_i$ is $GL^+(2,\R)$-geodesic.  So we can apply \Cref{lem:gl-single-speed}, which yields that each $N_i$ is infinitesimally totally geodesic.  It is easy to see that a product of infinitesimally geodesic subvarieties is again infinitesimally totally geodesic.  So the product cover of $N$ is infinitesimally totally geodesic.  This implies that $N$ itself is infinitesimally totally geodesic.  

\end{proof}

\subsection{Proof of \Cref{thm:bdy-tot-geod}}

We can now prove the main result of the paper.  

\begin{proof}[Proof of \Cref{thm:bdy-tot-geod}]
    By \Cref{prop:bdy-gl2r-geod}, each $\mathring{\partial} N$ is $GL^+(2,\R)$-geodesic. Then \Cref{lem:gl-to-inf} gives that $\mathring{\partial} N$ is infinitesimally totally geodesic, and finally by \Cref{prop:inf-geod-tot-geod}, it's totally geodesic.

\end{proof}

\section{Structure of prime totally geodesic subvarieties}
\label{sec:bdy-struc}

In this section we prove \Cref{thm:bdy-struc}, using results from the previous sections.  We will first prove two preliminary lemmas, starting with the following which gives injectivity of the projection maps on the lift to \TM space.   

\begin{lemma}
    \label{lem:teich-single-speed}
    Let $N\subset \M_{g_1,n_1} \times \cdots \times \M_{g_k,n_k}$ be prime and $GL^+(2,\R)$-geodesic.  Let $\tilde N$ be a lift of $N$ to $\T:=\T_{g_1,n_1} \times \cdots \times \T_{g_k,n_k}$.  Then for any distinct points $X,Y\in \tilde N$ there is a unique single-speed geodesic lying in $\tilde N$ that contains both $X,Y$.  Furthermore, for each $i$, the projection map $\tilde N \to \T_{g_i,n_i}$ is injective.  
\end{lemma}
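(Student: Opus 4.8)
The plan is to leverage \Cref{lem:gl-single-speed}, which already gives, for a generic set of $X \in N$, single-speed geodesics in $N$ tangent to every $v \in T_X N$. First I would transfer this from $N$ to the lift $\tilde N$: the projection $\T \to \M$ is a local isometry in each factor, so a single-speed geodesic in $N$ lifts to a single-speed geodesic in $\tilde N$ through any chosen preimage point, and conversely. Then, since $N$ is $GL^+(2,\R)$-geodesic it is infinitesimally totally geodesic by \Cref{lem:gl-to-inf} (or directly \Cref{lem:gl-single-speed}), hence totally geodesic by \Cref{prop:inf-geod-tot-geod}; in particular $\tilde N$ is totally geodesic. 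The uniqueness of the single-speed geodesic through two distinct points will follow from the unique geodesicity of the ordinary Teichm\"uller metric on each factor: given $X = (X_1,\dots,X_k)$ and $Y = (Y_1,\dots,Y_k)$, a single-speed geodesic containing both must, in each factor $i$, be the unique Teichm\"uller geodesic through $X_i, Y_i$, and the single-speed condition pins down the common parametrization up to the (irrelevant) affine reparametrization of $\R$; so at most one such geodesic exists. Existence will come from running the argument of \Cref{prop:inf-geod-tot-geod} with the exponential map $E_X$ restricted to the \emph{directions that generate single-speed geodesics}, using that \Cref{lem:gl-single-speed} supplies a topologically dense — in fact, by homogeneity and openness via invariance of domain, a full — set of such tangent directions at generic $X$, and then propagating to all of $\tilde N$ by the density/closedness argument already used in \Cref{prop:inf-geod-tot-geod}.

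For the second assertion, that each projection $\pi_i : \tilde N \to \T_{g_i,n_i}$ is injective, I would argue by contradiction using the constancy of ratios of areas. Suppose $X \ne Y$ in $\tilde N$ but $\pi_i(X) = \pi_i(Y)$ for some $i$. Connect $X$ to $Y$ by the single-speed geodesic $\gamma$ in $\tilde N$ just produced. In the $i$-th factor, $\gamma_i$ is a Teichm\"uller geodesic through the single point $\pi_i(X) = \pi_i(Y)$; a Teichm\"uller geodesic is an isometric embedding of $\R$, so it cannot return to the same point — hence $\gamma_i$ is \emph{constant}, and its generating quadratic differential on $\pi_i(X)$ is zero. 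But a single-speed geodesic has the same speed $c$ in every factor, and speed $0$ in factor $i$ forces $c = 0$, i.e. $\gamma$ is constant, contradicting $X \ne Y$. Wait — I should double-check this against the definition: a single-speed geodesic is the projection of $\gamma = (\gamma_1,\dots,\gamma_k)$ with all rescaling factors $c_j$ equal; if some $\gamma_i$ is constant while $\gamma$ is nonconstant, then that $c_i$ differs from the others (it is effectively $0$), contradicting single-speed. So the argument goes through directly. The genericity in \Cref{lem:gl-single-speed} is only ``for a generic set of $X$'', so the cleanest route to \emph{all} pairs $X, Y$ is to first establish existence of the single-speed geodesic for all pairs via the open-and-closed argument of \Cref{prop:inf-geod-tot-geod} (which upgrades a property holding on a dense set to one holding everywhere, using that $\tilde N$ is irreducible hence $\tilde N^{reg}$ connected, and using homeomorphy of $E_X$).

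The main obstacle I anticipate is the existence-and-fullness step: showing that at generic $X$ the set of tangent vectors $v \in T_X\tilde N$ that generate a single-speed geodesic lying in $\tilde N$ is \emph{all} of $T_X\tilde N$, not merely dense. \Cref{lem:gl-single-speed} gives density; to get everything, I would use that this set is the image of $\overline\psi$ (from the proof of \Cref{lem:gl-single-speed}), which is $\R$-homogeneous and, by invariance of domain applied to the injective continuous map $\overline\psi$ from the smooth variety $\overline{Q_X}$ of dimension $\ge \dim N$, has open image containing $0$; a homogeneous open set containing $0$ in the linear space $T_X\tilde N$ is the whole space. Then the endpoint-of-time-one-segment set $E_X^{-1}(\tilde N)$ contains this full linear space, and the argument of \Cref{prop:inf-geod-tot-geod} applies verbatim (replacing ``geodesic'' by ``single-speed geodesic'' throughout, which is legitimate since all the maps $E^i$ are built factor-wise with the same time parameter) to conclude existence for all pairs in $\tilde N$, after which uniqueness and the injectivity of the $\pi_i$ follow as above.
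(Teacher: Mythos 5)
Your proposal is correct and follows essentially the same route as the paper: lift the conclusion of \Cref{lem:gl-single-speed} to $\tilde N$, run the open-and-closed exponential-map argument of \Cref{prop:inf-geod-tot-geod} to connect arbitrary pairs by a single-speed geodesic, get uniqueness from unique geodesicity of the \TM metric, and get injectivity of each projection from the fact that two distinct points on a single-speed geodesic must differ in every factor. The extra detours (re-deriving fullness of the tangent directions, which \Cref{lem:gl-single-speed} already asserts at generic points, and the unused appeal to constancy of area ratios in the injectivity step) are harmless.
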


\begin{proof}
    By \Cref{lem:gl-single-speed}, for a generic point in $N$ and any tangent vector at that point, there is single-speed geodesic in $N$ tangent to that vector.  By lifting to $\tilde N$, we get that, for a generic $X$ in $\tilde N$, for all $v\in T_X\tilde N$ there is a single-speed geodesic tangent to $v$ and lying in $\tilde N$.  As in the proof of \Cref{prop:inf-geod-tot-geod}, we can deduce from this that any $X,Y\in \tilde N$ can be connected by a single-speed geodesic lying in $\tilde N$.  

    The uniqueness of the geodesic follows from uniqueness of geodesics in $\T$.  

    The injectivity statement follows from the existence of the single-speed geodesic: if $X,Y$ are distinct, then they differ in some factor, but since there is a single-speed geodesic connecting them, they must then differ in all factors. 
\end{proof}

\begin{proof}[Proof of \Cref{thm:bdy-struc}]
    By \Cref{prop:bdy-gl2r-geod}, $\mathring{\partial} N$ is $GL^+(2,\R)$-geodesic. It follows easily that its product cover is also $GL^+(2,\R)$-geodesic.  By \Cref{prop:prod-gl2r}, each prime factor $N_j$ of the product cover of $\mathring{\partial} N$ in some $\M_{g_1,n_1} \times \cdots \times \M_{g_k,n_k}$ is also $GL^+(2,\R)$-geodesic.  
    Now \Cref{lem:teich-single-speed} implies that for a lift $\tilde N_j$ of $N_j$ to $\T:=\T_{g_1,n_1} \times \cdots \times \T_{g_k,n_k}$, each projection map $\tilde \Phi: \tilde N_j \to \T_{g_i,n_i}$ is injective.  
    This means that the map $\Phi:N_j \to \M_{g_i,n_i}$ is locally injective in the orbifold sense, as desired.

\end{proof}

Next we show the stronger statement that the projections in the above theorem are in fact local isometries, in an appropriate sense.  We consider the metric $d_p$ from \Cref{defn:Lp-tot-geod}, for $1\le p \le \infty$.  Suppose $N\subset \M$ a totally geodesic subvariety, and $\Phi:N \to \M'$, where $\M,\M'$ are multi-component moduli spaces, with \TM space covers $\T,\T'$, respectively.  We say that $\Phi$ is a $d_p$-\emph{local isometry} if for any lift $\tilde N\subset \T$ of $N$, any lift $\tilde \Phi: \tilde N \to \T'$ is a local isometry for $d_p$ (this metric is used on domain and target).  

(The reason we work with lifts to \TM spaces rather than directly on multi-component moduli spaces is due to potential issues at orbifold points.)  

\begin{prop}
\label{prop:local-isom}
    With the same setup as \Cref{thm:bdy-struc}, for any for $1<p<\infty$, the map $\Phi$ is a $d_p$-local isometry.  
\end{prop}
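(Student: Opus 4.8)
The plan is to show that the local injectivity from \Cref{thm:bdy-struc} can be upgraded to a local isometry by exploiting the single-speed geodesics provided by \Cref{lem:teich-single-speed}. The key point is that on a prime $GL^+(2,\R)$-geodesic subvariety, any two nearby points are joined by a geodesic that moves at the \emph{same speed} in every factor; since the $d_p$ distance between two points of $\T$ lying on a single-speed geodesic of common speed $c$ is just $c$ times the parameter length (by the normalization in \Cref{defn:Lp-tot-geod}, where a vector with all components of norm $c$ has $\|\cdot\|_p = c$), and the same is true of the image under a projection $\tilde\Phi$ — which by injectivity cannot collapse the geodesic — the distances match.

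\textbf{Step 1.} Fix a prime factor $N_j \subset \M' := \M_{g_1,n_1}\times\cdots\times\M_{g_k,n_k}$ and a lift $\tilde N_j \subset \T':= \T_{g_1,n_1}\times\cdots\times\T_{g_k,n_k}$, together with the lift $\tilde\Phi:\tilde N_j\to \T_{g_i,n_i}$ of a projection map (here $\T_{g_i,n_i}$ is one of the factors, which we equip with $d_p$, trivially equal to the Teichm\"uller metric since there is one factor). By \Cref{lem:teich-single-speed}, for distinct $X,Y\in\tilde N_j$ there is a unique single-speed geodesic $\gamma$ in $\tilde N_j$ through both; say it has speed $c>0$ in every factor and $\gamma(0)=X$, $\gamma(T)=Y$ in the parametrization by $\R$. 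Then $d_p^{\T'}(X,Y) \le cT$ by integrating $\|\gamma'\|_p = c$, and in fact equality holds because $\gamma$ is a $d_p$-geodesic: this is exactly the content of the equivalence $(1)\Leftrightarrow(3)$ in \Cref{prop:tot-geod-equiv}, i.e. the geodesics of \Cref{defn:geod} coincide with $L^p$-geodesics when $1<p<\infty$, so $\gamma$ realizes the distance.

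\textbf{Step 2.} Now push forward. The curve $t\mapsto \tilde\Phi(\gamma(t))$ is a curve in $\T_{g_i,n_i}$; its restriction to the $i$th factor of the single-speed geodesic is the $i$th component geodesic $\gamma_i$, which by \Cref{defn:geod} is an isometric embedding of $\R$ up to rescaling the Teichm\"uller metric by $c$ (the common speed). Since $\tilde\Phi$ is injective (\Cref{lem:teich-single-speed}), $X\ne Y$ forces $\gamma_i(0)\ne\gamma_i(T)$, and thus $d_p^{\T_{g_i,n_i}}(\tilde\Phi X,\tilde\Phi Y) = d^{Teich}(\gamma_i(0),\gamma_i(T)) = cT$, because $\gamma_i$ traverses a genuine Teichm\"uller geodesic segment of speed $c$ and Teichm\"uller space is uniquely geodesic. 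Combining with Step 1 gives $d_p^{\T_{g_i,n_i}}(\tilde\Phi X,\tilde\Phi Y) = cT = d_p^{\T'}(X,Y)$ for all $X,Y$ close enough that the single-speed geodesic between them stays in the region where \Cref{lem:gl-single-speed} applies; restricting to such a neighborhood of any generic point yields that $\tilde\Phi$ is a local isometry there, and then one propagates to all of $\tilde N_j$ using that both metrics are path metrics and $\tilde\Phi$ is distance-nonincreasing on short curves (which follows because each component map is $1$-Lipschitz up to the rescaling). To handle the general statement for a multi-component target $\M'$ with several factors of moduli spaces, one applies the single-factor argument componentwise and observes that the $L^p$-norm of the resulting tangent vectors matches on the nose, again because single-speed means all component norms are equal so the $k^{-1/p}$ normalization cancels.

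\textbf{The main obstacle} I expect is making the "propagate from a generic neighborhood to all of $\tilde N_j$" step rigorous: \Cref{lem:gl-single-speed} only guarantees single-speed geodesics tangent to \emph{every} vector at a \emph{generic} point, so a priori the isometry property is only immediate on a dense open set, and one must argue (as in the proof of \Cref{prop:inf-geod-tot-geod}, via a density/continuity argument using that $E_X$ is a homeomorphism and $\tilde N_j$ is irreducible hence connected) that single-speed geodesics through \emph{arbitrary} pairs still exist and still realize the $d_p$ distance, so that the local isometry extends. A secondary subtlety is checking that $d_p$ restricted to the subvariety $\tilde N_j$ (the intrinsic path metric of $\tilde N_j$) agrees with the distance computed along these geodesics — but this is immediate since the single-speed geodesics lie \emph{inside} $\tilde N_j$, so the ambient $d_p$-distance is realized by a path within $\tilde N_j$.
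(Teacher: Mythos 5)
Your proposal is correct and follows essentially the same route as the paper: connect $X,Y$ by the single-speed geodesic of \Cref{lem:teich-single-speed}, use unique $d_p$-geodesicity for $1<p<\infty$ (\Cref{prop:tot-geod-equiv}) to identify $d_p(X,Y)$ with the common component length, and note that the projected arc is a genuine Teichm\"uller geodesic segment of that same length. The ``main obstacle'' you flag is not actually present: \Cref{lem:teich-single-speed} already provides single-speed geodesics in $\tilde N_j$ through \emph{arbitrary} pairs of points (the generic-to-global step is carried out in its proof, via the argument of \Cref{prop:inf-geod-tot-geod}), so your computation gives a global $d_p$-isometry of $\tilde N_j$ onto its image, exactly as in the paper, and no propagation argument is needed.
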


\begin{proof}
    As in proof of \Cref{thm:bdy-struc}, we see that each $N_j$ is $GL^+(2,\R)$-geodesic.  Consider any lift $\tilde N_j$ of $N_j$ to $\T:=\T_{g_1,n_1} \times \cdots \times \T_{g_k,n_k}$.  We claim that $\tilde \Phi: \tilde N_j \to \T_{g_i,n_i}$ is a $d_p$-isometry, which implies $\Phi$ is a $d_p$-local isometry.  Let $X,Y\in \tilde N_j$ be distinct. By \Cref{lem:teich-single-speed}, there exists a single-speed geodesic arc $\gamma$ connecting $X,Y$.  This means the projections of $\gamma$ to each of the factors of $\T$ are all \TM geodesic arcs of the same length $L$. Since the $d_p$ metric is uniquely geodesic and $\gamma$ is a geodesic arc for this metric (see proof of \Cref{prop:tot-geod-equiv}), it follows that $d_p(X,Y)=L$.  But $\tilde\Phi(X), \tilde\Phi(Y)\in \T_{g_i,n_i}$ are connected by $\Phi(\gamma)$, which we've seen also has length $L$, and hence $d_p(\tilde\Phi(X), \tilde\Phi(Y))=L$, and we are done.  
    
    \end{proof}

\begin{example}    
The following example shows that local injectivity, in the orbifold sense, of the projection map in \Cref{thm:bdy-struc} cannot be upgraded to injectivity, for general $GL^+(2,\mathbb{R})$-geodesic subvarieties (we do not claim that our example appears as the boundary of a totally geodesic subvariety of some $\M_{g,n}$).  

It is known \cite{schmithusen06} that for any genus $g \geq 2$, there exist surfaces of genus $g$ with Veech group $\Gamma(2)$, the principal congruence subgroup of $SL(2,\mathbb{Z})$ of level $2$. Let $N_1 \subset \calM_{g}$ be the associated Teichm{\"u}ller curve for such a surface, and let $N_2$ be the modular curve (stabilizer $SL(2,\mathbb{Z})$). The index is $[\Gamma:\Gamma(2)] = 6$. Thus, there is a degree six covering map $f:N_1 \to N_2$, which is a local isometry. Now, define the totally geodesic prime subvariety $N \subset \mathcal{M}_{g} \times \calM_{1,1}$ to be the graph of $f$ as a map from $N_1$ to $N_2$. Since $f$ is not injective, the projection $\rho_2: N \to \calM_{1,1}$ is not injective.

We note that $N$ is $GL^+(2,\mathbb{R})$-geodesic in the sense of Definition \ref{defn:gl2r-geod}. Indeed, consider the following set, defining a witness

\begin{displaymath}
Q = \left\{(q,s_{*}q) : q \in QN_1 \right\} \subset \mathcal{Q} \mathcal{M}_g \times \mathcal{Q} \mathcal{M}_{1,1}.
\end{displaymath}

The pushforward $s_{*}q$ is the pushforward of a torus cover as a half-translation surface, to the torus. Observe that $N$ is prime, as $Q$ is complex 2-dimensional, so cannot be written as a product of a subset in $\mathcal{Q}\mathcal{M}_g$ and a subset in $\mathcal{Q}\mathcal{M}_{1,1}$.
\end{example}

{\footnotesize
\bibliographystyle{amsalpha}
  \bibliography{sources}%
  }

\end{document}